\newtheorem{theorem}{Theorem}[section]
\newtheorem{corollary}[theorem]{Corollary}
\newtheorem{lemma}[theorem]{Lemma}
\newtheorem{proposition}[theorem]{Proposition}
\theoremstyle{definition}
\newtheorem{definition}{Definition}[section]
\newtheorem{remark}{Remark}[section]
\newcommand{\B}{\mathcal{B}}
\DeclareMathOperator{\esssup}{ess\,sup}
\DeclareMathOperator{\hypo}{\mathrm{hypo}}
\DeclareMathOperator{\ran}{ran}
\DeclareMathOperator{\tr}{tr}
\newcommand\supp{\mathop{\rm supp}}
\begin{document}

\title[Analytic lifts of operator concave functions]{Analytic lifts of operator concave functions}
\author[Mikl\'os P\'alfia]{Mikl\'os P\'alfia}
\address{Department of Mathematics, Sungkyunkwan University, Suwon 440-746, Korea.}
\address{Bolyai Institute, Interdisciplinary Excellence Centre, University of Szeged, H-6720 Szeged, Aradi v\'ertan\'uk tere 1., Hungary.}
\email{palfia.miklos@aut.bme.hu}

\subjclass[2010]{Primary 46L52, 47A56 Secondary 47A64}
\keywords{free function, operator concave function, operator monotone function, operator mean}

\dedicatory{Dedicated to the memory of my grandfather B\'ela P\'alfia (1928-2020).}

\date{\today}

\begin{abstract}
The motivation behind this paper is threefold. Firstly, to study, characterize and realize operator concavity along with its applications to operator monotonicity of free functions on operator domains that are not assumed to be matrix convex. Secondly, to use the obtained Schur complement based representation formulas to analytically extend operator means of probability measures and to emphasize their study through random variables. Thirdly, to obtain these results in a decent generality. That is, for domains in arbitrary tensor product spaces of the form $\mathcal{A}\otimes\mathcal{B}(E)$, where $\mathcal{A}$ is a Banach space and $\mathcal{B}(E)$ denotes the bounded linear operators over a Hilbert space $E$. Our arguments also apply when $\mathcal{A}$ is merely a locally convex space.
\end{abstract}

\maketitle

\section{Introduction}
The main object of our investigations is a free function $F:D(E)\mapsto\mathcal{B}(E)$ with self-adjoint domain and range satisfying
\begin{equation}\label{eq0:concavity}
F\left((I_{\mathcal{A}}\otimes W^*)X(I_{\mathcal{A}}\otimes W)\right)\geq W^*F(X)W
\end{equation}
for each isometry $W:E\mapsto K$ for Hilbert spaces $E,K$ and $X\in D(K)$ such that also $W^*XW\in D(E)\subseteq\mathcal{A}\otimes\mathcal{B}(E)$, where $I_{\mathcal{A}}$ denotes the identity map of a Banach space $\mathcal{A}$. In particular if $(D(E))$ is matrix convex, then $W^*XW\in D(E)$ always when $X\in D(K)$, however this is not required in this paper. It is known by \cite{palfia1}, that \eqref{eq0:concavity} characterizes operator concave free functions determined by
\begin{equation}\label{eq0:concavity2}
F\left((1-\lambda)X+\lambda Y\right)\geq (1-\lambda)F\left(X\right)+\lambda F\left(Y\right)
\end{equation}
for $\lambda\in[0,1]$, $X,Y\in D(E)$ on a matrix convex domain $(D(E))$, so one might think of an $F$ above in \eqref{eq0:concavity} as a partially defined operator concave function. A conceptually similar problem in the particular case of power means of positive numbers is treated successfully in \cite{limpalfia2} and then in \cite{lawsonlim1} by lifting the real function into a fully non-commutative through characterizing nonlinear operator equations. The other paper known to the author that does not assume matrix convexity of the domain is the foundational material \cite{agler}, in which local monotonicity is characterized for a real multivariable function through construction of analytic extensions to upper half-planes. However \cite{agler} does not succeed in lifting up the real function into a full non-commutative free function on an enclosing matrix convex domain, nor can it show that it preserves the partial order. In this paper this problem of non-commutative lifts is eliminated by transforming the problem in section 2 into a more suitable form handled in Theorem~\ref{T:Loewner_several_var2} for real multivariable functions that preserves the partial order between commuting tuples of matrices. Theorem~\ref{T:Loewner_several_var2} constructs full non-commutative analytic lifts of these real multivariable functions to matrix convex hulls of the original domain. This is based on the more general Corollary~\ref{C:OperatorModel} characterizing functions satisfying \eqref{eq0:concavity}.

Usually matrix convexity of the domain $(D(E))$ is essential for the machinery of various further existing results characterizing operator monotone or operator concave \eqref{eq0:concavity2} functions, like \cite{helton,palfia1} and \cite{pascoe} when $\mathcal{A}=\mathbb{C}^k$ for a positive integer $k$, and in \cite{gaalpalfia,pascoe1,pascoe2} when $\mathcal{A}$ is an operator system. Excluding \cite{palfia1}, the other existing results in the field are restricted to the case when $\dim(E)<\infty$ and $F$ is continuous with respect to finitely open topologies used to study holomorphic functions in general, see the monograph \cite{verbovetskyi}. This essentially renders investigations in \cite{gaalpalfia,pascoe1,pascoe2} in the norm topology restricted to matrix convex matricial domains of $M_n(\mathcal{A})\simeq M_n(\mathbb{C})\otimes \mathcal{A}$ for an operator space $\mathcal{A}$. In this paper $\mathcal{A}$ can be any Banach space and $E$ can be infinite dimensional over the same ground field. Actually all results of section 3 can be worked out in exactly the same way for a locally convex vector space $\mathcal{A}$ as remarked there. However we will not need that here, for our applications in the last section, the Banach space case suffices.

The above mentioned restrictions of the state of the art of free function theory become apparent if we consider the recent developments in the theory of operator means of probability measures of positive operators in \cite{hiailawsonlim,hiailim,limpalfia,palfia2}. There, one studies functions $F:\mathcal{P}^\infty(\mathbb{P}(E))\mapsto\mathbb{P}(E)$ on the cone of probability measures $\mathcal{P}^\infty(\mathbb{P}(E))$ over the positive invertible operators $\mathbb{P}(E)$, which preserve the \emph{stochastic order} \cite{hiailawsonlim} of probability measures. We show that one can lift such a function $F$ into an operator monotone free function $\hat{F}:L^\infty([0,1],\lambda)^+\otimes \mathbb{P}(E)\mapsto\mathbb{P}(E)$ of $\mathbb{P}(E)$-valued random variables, thus satisfies \eqref{eq0:concavity}. Then we apply our results to this setting, to analytically continue an operator mean in several variables to the probability measure setting, thus obtaining $\hat{F}:L^\infty([0,1],\lambda)^+\otimes \mathbb{P}(E)\mapsto\mathbb{P}(E)$. This provides a realization for a class of operator means whose study were initiated in \cite{palfia2} and put into a framework in \cite{hiailim}. The main results in this topic are in section 4, specifically Theorem~\ref{T:OpMeanProbMeas}, Corollary~\ref{C:OpMeansProbMeasure} and Definition~\ref{D:directsumProbMeas}

The results of the paper are self-contained in the sense, that we essentially use only standard operator theoretic results available for example in \cite{takesaki} to study free functions. Detailed structure theory of free functions like the monograph \cite{verbovetskyi} is not required. In section 4 we also make use of the theory of the stochastic order of probability measures and some related results which are from probability theory.

In the following, we explicitly introduce the basic definitions of the objects to be studied in this paper. All vector spaces are over the ground fields $\mathbb{R}$ or $\mathbb{C}$ respectively. Let $\mathcal{A}$ be a vector space and let $I_\mathcal{A}:\mathcal{A}\mapsto\mathcal{A}$ denote the identity map.
\begin{definition}[Free set and matrix convex set]\label{D:matrix_covexity}
A collection $(D(E))$ of sets of operators $D(E)\subseteq\mathcal{A}\otimes\mathcal{B}(E)$ for each Hilbert space $E$ over the ground field $\mathbb{R}$ or $\mathbb{C}$ is a called a \emph{free set} whenever for all Hilbert spaces $E,K$ we have the following:
\begin{itemize}
	\item[1)] $(I_\mathcal{A}\otimes U^*)D(E)(I_\mathcal{A}\otimes U)\subseteq D(K)$ for all unitary $U:K\mapsto E$.
	\item[2)] $D(E)\oplus D(K)\subseteq D(E\oplus K)$.
\end{itemize}

If additionally (2) holds for any linear isometry $U:K\mapsto E$, then $(D(E))$ is a \emph{matrix convex set}. 

Sometimes the collection $(D(E))$ will be restricted to the case $\dim(E)<\infty$. In that case, for all other involved Hilbert spaces $K$ we assume $\dim(K)<\infty$ as well.
\end{definition}

We remark that if a given free set $(D(E))$ is matrix convex, then according to \cite{helton4} each $D(E)$ is convex in the usual sense.

\begin{definition}[Free function]\label{D:freeFunction}
Let $\mathcal{L}$ be a fixed Hilbert space. A collection of functions $F:D(E)\mapsto \mathcal{B}(\mathcal{L}\otimes E)$ indexed by $E$ for a free set $D(E)\subseteq\mathcal{A}\otimes\mathcal{B}(E)$ defined for all Hilbert spaces $E,K$ is called a \textit{free function} whenever for all $A \in D(E)$ and $B\in D(K)$, we have
\begin{itemize}
	\item[1)] unitary invariance, that is
	$$F((I_\mathcal{A}\otimes U^*)A(I_\mathcal{A}\otimes U))=(I_\mathcal{L}\otimes U^{*})F(A)(I_\mathcal{L}\otimes U)$$ 
	holds for all unitaries $U:E\mapsto K$;
	\item[2)] direct sum invariance, that is
	$$F\left(A \oplus B\right)=F(A) \oplus F(B).$$
\end{itemize}
\end{definition}
In the paper we assume that $\mathcal{L}=\mathbb{C}$, since given a free function $F:D(E)\mapsto \mathcal{B}(\mathcal{L}\otimes E)$, one can study its slices $l(F):D(E)\mapsto \mathcal{B}(E)$ instead, where $l\in\mathcal{B}(\mathcal{L})^*_+$ is a state of $\mathcal{B}(\mathcal{L})$, since $l(F)$ is then also a free function in the same class as $F$ itself regarding operator concavity or monotonicity, so essentially the same techniques apply to them.


\section{Lifted hypographs are matrix convex}
We use $\mathbb{P}(E)$ to denote the cone of invertible positive and $\mathbb{S}(E)\supset \mathbb{P}(E)$ the self-adjoint bounded linear operators over the Hilbert space $E$, so that $\mathbb{P}(\mathbb{C})$ denotes the positive reals.

\begin{definition}[cf. \cite{agler}]
A real function $f:\mathbb{P}(\mathbb{C})^k\mapsto\mathbb{P}(\mathbb{C})$ is said to be \emph{globally operator monotone}, if for any $X\leq Y\in\mathbb{CP}(E)^k$, $\dim(E)<+\infty$ we have $f(X)\leq f(Y)$, where $\mathbb{CP}(E)^k$ denotes the set of pairwise commuting $k$-tuples of invertible positive bounded linear operators on $E$, and $f(X):=U^*f(\Lambda)U$ where $X=U^*\Lambda U$ denotes the joint spectral decomposition of the pairwise commuting tuple $X$ and $f(\Lambda):=\bigoplus_{i=1}^k f(\{\Lambda_1\}_{ii},\ldots,\{\Lambda_k\}_{ii})$.
\end{definition}




\begin{proposition}\label{P:monotone_concave}
Let $f:\mathbb{P}(\mathbb{C})^k\mapsto\mathbb{P}(\mathbb{C})$ be a globally operator monotone function. Then for any isometry $W:E\mapsto K$ between finite dimensional Hilbert spaces $E,K$ and any $X\in\mathbb{CP}(K)^k$ such that $W^*XW\in\mathbb{CP}(E)^k$ we have
\begin{equation}\label{eq0:P:monotone_concave}
W^*f(X)W\leq f(W^*XW).
\end{equation}
In particular $f$ is concave and continuous as a real function.
\end{proposition}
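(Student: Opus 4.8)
The plan is to first reduce the two concluding assertions to the main inequality \eqref{eq0:P:monotone_concave}, and then to prove \eqref{eq0:P:monotone_concave} itself by a dilation that recasts it as an operator concavity statement, which is where the global monotonicity hypothesis must do the real work.

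I would begin with the concluding assertions, since they are the easy consequences. Applying \eqref{eq0:P:monotone_concave} to the Hadamard isometry $W=\frac{1}{\sqrt2}\binom{I_E}{I_E}:E\mapsto E\oplus E$ and to $X=A\oplus B=\diag(A,B)$ for commuting tuples $A,B$ gives $W^*XW=\frac{A+B}{2}$ and $W^*f(X)W=\frac{f(A)+f(B)}{2}$, so \eqref{eq0:P:monotone_concave} specializes to the midpoint inequality $\frac{f(A)+f(B)}{2}\le f\big(\frac{A+B}{2}\big)$. Taking $E=\mathbb{C}$ (where every compression is automatically a commuting tuple), this is exactly midpoint concavity of $f$ as a real function on the convex set $\mathbb{R}_{>0}^{k}$. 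Restricting the monotonicity hypothesis to $E=\mathbb{C}$ shows that $f$ is coordinatewise nondecreasing, hence locally bounded; a midpoint-concave, locally bounded function on an open convex set is automatically continuous and concave, which gives the last two claims.

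Next I would attack \eqref{eq0:P:monotone_concave} by dilation. Extend the isometry to a unitary $U_0=(W\ W_\perp):E\oplus E'\mapsto K$, where $W_\perp$ maps isometrically onto $(\ran W)^\perp$. Writing $Y_j:=U_0^*X_jU_0$ in block form for $E\oplus E'$, the $(1,1)$ block of $Y_j$ is exactly $W^*X_jW$, and by unitary invariance $f(Y)=U_0^*f(X)U_0$, so the $(1,1)$ block of $f(Y)$ is $W^*f(X)W$. Thus \eqref{eq0:P:monotone_concave} is equivalent to the statement that the $(1,1)$ block of $f(Y)$ is dominated by $f$ of the $(1,1)$ block of $Y$. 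With the self-adjoint unitary $V:=I_E\oplus(-I_{E'})$ one computes $\frac12\big(Y+VYV\big)=C\oplus D$ with $C=W^*XW$, and both $f(Y)$ and $f(VYV)=Vf(Y)V$ have $(1,1)$ block $W^*f(X)W$. Hence \eqref{eq0:P:monotone_concave} is precisely the $(1,1)$-compression of the operator midpoint-concavity inequality $\frac12\big(f(Y)+f(VYV)\big)\le f\big(\frac12(Y+VYV)\big)$ applied to the two unitarily equivalent commuting tuples $Y$ and $VYV$.

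The hard part is exactly this midpoint-concavity step, that is, deriving \emph{concavity} from \emph{monotonicity}. I expect this to be the main obstacle, and I emphasize that it cannot come from any single compression/dilation comparison: coordinatewise (commutative) monotonicity alone never forces concavity, as $t\mapsto t^2$ shows, so the strength of global operator monotonicity in \emph{all} finite dimensions is decisive here, in the spirit of Löwner's theorem and of \cite{agler}. I would obtain it by producing, from global operator monotonicity, an integral (Pick–Nevanlinna, resp.\ Agler) representation of $f$ as a positive combination of elementary resolvent kernels, each of which is directly seen to be operator concave on commuting tuples; summing yields operator concavity of $f$, and then \eqref{eq0:P:monotone_concave} follows either from the reduction above or, equivalently, from the isometric Jensen form \eqref{eq0:concavity} characterized in \cite{palfia1}. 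One subtlety to dispatch in the reduction is that the complementary compression $D=W_\perp^*XW_\perp$ need not be a commuting tuple, so $f$ may not be directly defined on $C\oplus D$; this is harmless because the midpoint form of concavity coming from the resolvent representation can be verified for the genuinely commuting pair $Y,VYV$ after enlarging the dilation space so that the complementary block is taken scalar, which is permissible since the hypothesis is available in every finite dimension.
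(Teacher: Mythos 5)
There is a genuine gap at the step you yourself flag as ``the hard part.'' Your reduction to the midpoint inequality $\tfrac12\bigl(f(Y)+f(VYV)\bigr)\le f\bigl(\tfrac12(Y+VYV)\bigr)$ runs into the obstruction that $\tfrac12(Y+VYV)=C\oplus D$ with $D=W_\perp^*XW_\perp$ generally \emph{not} a commuting tuple, so $f$ is not even defined there; your proposed fix (``enlarge the dilation space so that the complementary block is taken scalar'') is not a construction --- for a fixed $X$ the complementary compression is whatever it is, and no enlargement of the dilation space makes it scalar. More seriously, the tool you invoke to supply concavity, namely a Pick--Nevanlinna/Agler integral representation of globally operator monotone functions, is circular relative to this paper: such a representation is essentially the content of Theorem~\ref{T:Loewner_several_var2}, which is proved downstream of this very proposition (via Theorem~\ref{T:convex_hull_saturated} and the model-theoretic machinery of section~3). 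So the proposal defers the entire difficulty to a theorem that depends on the statement being proved, and does not carry out that step in any case.

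The idea you are missing is elementary and avoids concavity altogether. The paper takes the unitary dilation $U$ of $W$, conjugates the commuting tuple $X\oplus A$ (for an arbitrary auxiliary $A\in\mathbb{CP}(E)^k$), and then \emph{dominates} the resulting commuting tuple from above by the block-diagonal commuting tuple $\diag(W^*XW+\epsilon I,\,2zI)$: the off-diagonal blocks $D=-W^*X(I-WW^*)^{1/2}$ are absorbed because $\bigl[\begin{smallmatrix}\epsilon I& D\\ D^*& z I\end{smallmatrix}\bigr]\ge0$ once $z I\ge\frac1\epsilon DD^*$, with $z$ a large positive scalar tuple. Both sides of this domination are commuting tuples, so \emph{global operator monotonicity applies directly} and, after comparing $(1,1)$ corners, yields $W^*f(X)W\le f(W^*XW+\epsilon I)$. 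Concavity and continuity are then deduced from this $\epsilon$-perturbed inequality (via the right limit $f^{+}$ and the convex-combination isometry), and only afterwards is the limit $\epsilon\to0+$ taken to remove the perturbation. In short: monotonicity is used as the engine and concavity is a corollary, whereas your plan tries to manufacture concavity first by importing representation theory that is not yet available.
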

\begin{proof}
Let
\begin{equation*}
U:=\left[ \begin{array}{cc}
W & (I-WW^*)^{1/2} \\
(I-W^*W)^{1/2} & -W^* \end{array} \right]=
\left[ \begin{array}{cc}
W & (I-WW^*)^{1/2} \\
0 & -W^* \end{array} \right]
\end{equation*}
denote a unitary dilation of the isometry $W$, i.e. $U^*U=UU^*=I$ on $E\oplus K$. Now choose arbitrary $A\in\mathbb{CP}(E)^k$ and let
\begin{equation*}
C:=(I-WW^*)^{1/2}X(I-WW^*)^{1/2}+WAW^*.
\end{equation*}
Then we have
\begin{equation*}
U^*\left[ \begin{array}{cc}
X & 0 \\
0 & A \end{array} \right]U=
\left[ \begin{array}{cc}
W^*XW & W^*X(I-WW^*)^{1/2} \\
(I-WW^*)^{1/2}XW & C \end{array} \right].
\end{equation*}
Set $D:=-W^*X(I-WW^*)^{1/2}$ and notice that for any given $\epsilon>0$
\begin{equation*}
\left[ \begin{array}{cc}
W^*XW+\epsilon I & 0 \\
0 & 2zI \end{array} \right]-U^*\left[ \begin{array}{cc}
X & 0 \\
0 & A \end{array} \right]U\geq
\left[ \begin{array}{cc}
\epsilon I & D \\
D^* & zI \end{array} \right]
\end{equation*}
if $zI \geq C=(I-WW^*)^{1/2}X(I-WW^*)^{1/2}+WAW^*$ for $z\in\mathbb{P}(\mathbb{C})^k$. The last $k$-tuple of block matrices above is positive semi-definite if $z_iI \geq\frac{1}{\epsilon} D_iD_i^*$ for all $1\leq i\leq k$. So, for sufficiently large positive $k$-tuple $z$ we have
\begin{equation*}
U^*\left[ \begin{array}{cc}
X & 0 \\
0 & A \end{array} \right]U\leq \left[ \begin{array}{cc}
W^*XW+\epsilon I & 0 \\
0 & 2zI \end{array} \right].
\end{equation*}
For such $z>0$, by the global operator monotonicity of $f$ we get
\begin{equation*}
f\left(U^*\left[ \begin{array}{cc}
X & 0 \\
0 & A \end{array} \right]U\right)\leq \left[ \begin{array}{cc}
f(W^*XW+\epsilon I) & 0 \\
0 & f(2z)I \end{array} \right].
\end{equation*}
We also have that
\begin{equation*}
\begin{split}
&f\left(U^*\left[ \begin{array}{cc}
X & 0 \\
0 & A \end{array} \right]U\right)=
U^*\left[ \begin{array}{cc}
f(X) & 0 \\
0 & f(A) \end{array} \right]U\\
&=\left[ \begin{array}{cc}
W^*f(X)W & W^*f(X)(I-WW^*)^{1/2} \\
(I-WW^*)^{1/2}f(X)W & \begin{array}{c} (I-WW^*)^{1/2}f(X)(I-WW^*)^{1/2}+ \\
+Wf(A)W^*\end{array} \end{array} \right],
\end{split}
\end{equation*}
hence we obtain that
\begin{equation}\label{eq:P:monotone_concave}
W^*f(X)W\leq f(W^*XW+\epsilon I).
\end{equation}
Now since $f$ is monotone, $f(X+\epsilon{I})$ for $\epsilon>0$ forms a decreasing net bounded from below by $f(X)$, thus the right limit
$$f^{+}(X):=\inf_{\epsilon>0}f(X+\epsilon{I})=\lim_{\epsilon\to 0+}f(X+\epsilon{I})$$
exists for all $X\in\mathbb{CP}(K)^k$ and $f^{+}$ is a multivariable real function. Hence for any $\epsilon>0$, using \eqref{eq:P:monotone_concave} with $W=[\lambda^{1/2}I, (1-\lambda)^{1/2}I]$ where $\lambda\in[0,1]$ and $X=\left[ \begin{array}{cc}
a & 0 \\
0 & b \end{array} \right]$ with $a,b\in\mathbb{CP}(\mathbb{C})^k$, we obtain
\begin{equation*}
\lambda f^{+}(a)+(1-\lambda)f^{+}(b)\leq \lambda f(a+\epsilon{I})+(1-\lambda)f(b+\epsilon{I})\leq f(\lambda a+(1-\lambda)b+2\epsilon{I}).
\end{equation*}
Taking the limit $\epsilon\to 0+$ we obtain that
\begin{equation*}
\lambda f^{+}(a)+(1-\lambda)f^{+}(b)\leq f^{+}(\lambda a+(1-\lambda)b),
\end{equation*}
i.e. the real function $f^{+}$ is concave, thus continuous. Also
\begin{equation*}
f(X)\leq f^{+}(X)\leq f(X+\epsilon{I})
\end{equation*}
for all $\epsilon>0$. Since $f$ is monotone increasing, we have
\begin{equation*}
f^{+}({X}-\epsilon{I})\leq f({X})\leq f^{+}({X}),
\end{equation*}
and since $f^{+}$ is continuous we get that $f=f^{+}$ by taking the limit $\epsilon\to 0+$. Hence we can also take the limit $\epsilon\to 0+$ in \eqref{eq:P:monotone_concave} proving \eqref{eq0:P:monotone_concave}.
\end{proof}

\begin{corollary}\label{C:monotoneJensen}
Under the assumptions of Proposition~\ref{P:monotone_concave}, \eqref{eq0:P:monotone_concave} remains true with contractions $W:E\mapsto K$, that is $\|W\|\leq 1$.
\end{corollary}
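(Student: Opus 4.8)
The plan is to reduce the contraction case to the isometry case of Proposition~\ref{P:monotone_concave} by dilating $W$ to an isometry via its defect operator, but with an auxiliary tuple chosen so that the relevant compression remains a \emph{commuting} tuple, which is exactly what the hypothesis of Proposition~\ref{P:monotone_concave} demands.

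First I would treat strict contractions, $\|W\|<1$, so that $D_W:=(I-W^*W)^{1/2}$ is invertible on $E$. Form the isometry $\tilde W:=\begin{pmatrix} W \\ D_W\end{pmatrix}:E\mapsto K\oplus E$, which satisfies $\tilde W^*\tilde W=W^*W+D_W^2=I_E$. For $\epsilon>0$ set $A:=\epsilon(I-W^*W)^{-1}\in\mathbb{P}(E)$ and consider the tuple $\tilde X:=(X_1\oplus A,\dots,X_k\oplus A)\in\mathbb{CP}(K\oplus E)^k$; its two diagonal blocks are commuting tuples, hence so is $\tilde X$. The point of this particular $A$ is the identity $D_W A D_W=\epsilon I$, which yields $\tilde W^*\tilde X\tilde W=W^*XW+\epsilon I$. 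This compression is again a commuting tuple of positive invertible operators, since $W^*XW\in\mathbb{CP}(E)^k$ by hypothesis and adding a scalar preserves both commutativity and positivity; therefore Proposition~\ref{P:monotone_concave} is applicable to the pair $(\tilde W,\tilde X)$.

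Applying \eqref{eq0:P:monotone_concave} to $\tilde W$ and $\tilde X$ gives $\tilde W^*f(\tilde X)\tilde W\leq f(\tilde W^*\tilde X\tilde W)=f(W^*XW+\epsilon I)$. Because the joint spectral decomposition of a block-diagonal tuple is the direct sum of those of its blocks, $f(\tilde X)=f(X)\oplus f(A)$, so that $\tilde W^*f(\tilde X)\tilde W=W^*f(X)W+D_W f(A)D_W$. Since $f$ takes values in the positive reals we have $f(A)\geq 0$, hence $D_W f(A)D_W\geq 0$, and thus $W^*f(X)W\leq \tilde W^*f(\tilde X)\tilde W\leq f(W^*XW+\epsilon I)$. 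Letting $\epsilon\to 0+$ and invoking the continuity of $f$ established in Proposition~\ref{P:monotone_concave} (on a neighbourhood of the compact joint spectrum of $W^*XW$, which lies in $(0,\infty)^k$), I obtain \eqref{eq0:P:monotone_concave} for every strict contraction $W$.

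Finally, for an arbitrary contraction $\|W\|\leq 1$ I would apply the strict case to $tW$ with $t\in(0,1)$: as $(tW)^*X(tW)=t^2W^*XW\in\mathbb{CP}(E)^k$, the previous step gives $t^2W^*f(X)W\leq f(t^2W^*XW)$, and letting $t\to 1-$, once more using continuity of $f$, delivers the claim. The main obstacle is precisely the commutativity issue handled in the second paragraph: the naive block-diagonal dilation with $A=\epsilon I$ would produce the compression $W^*XW+\epsilon(I-W^*W)$, which need \emph{not} be a commuting tuple because $W^*W$ generally fails to commute with the $W^*X_iW$, so Proposition~\ref{P:monotone_concave} could not be invoked; the substitution $A=\epsilon(I-W^*W)^{-1}$, forcing $D_W A D_W=\epsilon I$, is exactly what restores commutativity. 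Everything else is routine.
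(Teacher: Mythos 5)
Your argument is correct, and it reaches the conclusion by a genuinely different route from the paper. The paper simply reruns the unitary-dilation argument from Proposition~\ref{P:monotone_concave} with the auxiliary block $A=0$, which forces it to extend $f$ to the boundary of its domain by declaring $f(0):=0$ (harmless, since $f>0$ on the open orthant keeps the extension globally monotone), and then inherits the $\epsilon$-limit already present in that proof. You instead use Proposition~\ref{P:monotone_concave} strictly as a black box: you dilate the contraction to the column isometry $\tilde W=\binom{W}{D_W}$ and, crucially, choose the auxiliary block $A=\epsilon(I-W^*W)^{-1}$ so that $D_WAD_W=\epsilon I$, which keeps the compression $\tilde W^*\tilde X\tilde W=W^*XW+\epsilon I$ a commuting tuple of positive invertibles and never leaves $\mathbb{CP}^k$. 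Your diagnosis that the naive choice $A=\epsilon I$ would break commutativity of the compressed tuple is exactly right, and it is the reason your fix is not just cosmetic. The price of your route is the two-stage limit (first $\epsilon\to 0+$, then $t\to 1-$ to pass from strict contractions, where $(I-W^*W)^{-1}$ exists, to general ones), both steps justified by the continuity $f=f^+$ established in Proposition~\ref{P:monotone_concave}; the benefit is that you never evaluate $f$ outside $\mathbb{P}(\mathbb{C})^k$ and never have to re-examine the internals of the earlier proof. Both proofs are complete; yours is longer but more modular.
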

\begin{proof}
If $\|W\|\leq 1$, then $(I-W^*W)^{1/2}$ is not necessarily $0$. However the block operator matrix $U$ is still unitary and we can choose $A=0$ in the proof with $f(0):=0$ since $f|_{\mathbb{P}(\mathbb{C})^k}>0$, and the same block operator matrix argumentation goes through, leading to \eqref{eq0:P:monotone_concave}.
\end{proof}

\begin{definition}[Matrix convex hull]\label{D:MatrixConvexHull}
Given a disjoint union of sets $(C(E))$ for each Hilbert space $\dim(E)<+\infty$, its matrix convex hull, denoted as $(\mathrm{co}^{\mathrm{mat}}C(E))$ for each Hilbert space $\dim(E)<+\infty$, is defined as the smallest matrix convex set containing $(C(E))$. By Proposition 2.6 in \cite{helton4}, it is known that if $(C(E))$ is closed under direct sums, then
$$\mathrm{co}^{\mathrm{mat}}C(E):=\{V^*XV:X\in C(K),\dim(K)<+\infty,V:E\mapsto K\text{ an isometry}\}.$$
\end{definition}

Notice that in general convex combinations are themselves matrix convex combinations, since $(1-\lambda)A+\lambda B=V^*(A\oplus B)V$, where $V=\left[ \begin{array}{c}
(1-\lambda) \\
\lambda \end{array} \right]$ is an isometry for $\lambda\in[0,1]$. Provided the above definition of the matrix convex hull, the following result is almost immediate.

\begin{lemma}\label{L:CoMatHullP}
We have $\mathrm{co}^{\mathrm{mat}}\mathbb{CP}^k(E)=\mathbb{P}^k(E)$ for each $\dim(E)<\infty$.
\end{lemma}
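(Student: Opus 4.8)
The plan is to apply the explicit description of the matrix convex hull recorded in Definition~\ref{D:MatrixConvexHull}. First I would check its hypothesis: $\mathbb{CP}^k(E)$ is closed under direct sums, since if $X$ is a commuting tuple of positive invertible operators on $E$ and $Y$ is one on $K$, then the componentwise direct sum $X\oplus Y$ consists of positive invertible operators on $E\oplus K$ that still pairwise commute (in each summand the relevant factors commute). Hence Definition~\ref{D:MatrixConvexHull} gives
$$\mathrm{co}^{\mathrm{mat}}\mathbb{CP}^k(E)=\{V^*XV:X\in\mathbb{CP}^k(K),\ \dim(K)<\infty,\ V:E\mapsto K\text{ an isometry}\},$$
and the proof reduces to proving the two inclusions between this set and $\mathbb{P}^k(E)$.

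For the inclusion $\subseteq$, I would argue directly that compressions of commuting positive tuples stay positive and invertible. If $V$ is an isometry and $X\in\mathbb{CP}^k(K)$, then each component satisfies $X_i\geq c_i I$ for some $c_i>0$, so $V^*X_iV\geq c_i V^*V=c_i I>0$, whence $V^*XV\in\mathbb{P}^k(E)$ (we simply forget that the compressed tuple need no longer commute). Equivalently, one may observe that $\mathbb{P}^k(E)$ is itself matrix convex and contains $\mathbb{CP}^k(E)$, so it contains the smallest such set.

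The substance of the lemma is the reverse inclusion $\supseteq$, and this is where I expect the only real work. Given an arbitrary, generally \emph{non-commuting}, tuple $A=(A_1,\dots,A_k)\in\mathbb{P}^k(E)$, I would realize it as a compression of a commuting tuple living on $K:=E^{\oplus k}$. Take the normalized diagonal embedding $V:E\mapsto E^{\oplus k}$, $V\xi=k^{-1/2}(\xi,\dots,\xi)$, which is an isometry since $\|V\xi\|^2=\tfrac{1}{k}\sum_{j=1}^k\|\xi\|^2=\|\xi\|^2$. Since each $A_i\geq\alpha_i I$ with $\alpha_i>0$, I can fix a scalar $c>0$ small enough that $kA_i-(k-1)cI>0$ for every $i$. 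Then define the block-diagonal operator $X_i$ on $E^{\oplus k}$ to equal $cI$ in every slot except the $i$-th, where it equals $kA_i-(k-1)cI$. Each $X_i$ is positive invertible, and the key point is that the $X_i$ pairwise commute: for $i\neq j$, in each of the $k$ diagonal blocks at most one of $X_i,X_j$ is non-scalar while the other is a scalar multiple of the identity, which commutes with everything. Thus $X=(X_1,\dots,X_k)\in\mathbb{CP}^k(K)$. A direct computation gives $V^*X_iV=\tfrac{1}{k}\bigl[(k-1)cI+(kA_i-(k-1)cI)\bigr]=A_i$, so $V^*XV=A$ and therefore $A\in\mathrm{co}^{\mathrm{mat}}\mathbb{CP}^k(E)$.

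The main obstacle, as I see it, is conceptual rather than computational: one must produce a \emph{single} isometry $V$ and a \emph{single} commuting tuple whose compression returns all $k$ prescribed operators simultaneously, despite the $A_i$ not commuting among themselves. The block-diagonal construction resolves this by placing each $A_i$ in its own slot, so that commutativity of the lifted tuple is automatic, while the scalar offset $kA_i-(k-1)cI$ is tuned precisely so that averaging the blocks under $V^*(\cdot)V$ recovers $A_i$ exactly. Verifying commutativity and this compression identity are then routine checks, and finiteness of $\dim(K)=k\dim(E)$ is immediate when $\dim(E)<\infty$.
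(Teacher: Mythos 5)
Your argument is correct: both inclusions are verified, the direct-sum closure needed to invoke Definition~\ref{D:MatrixConvexHull} is checked, the commutativity of your block-diagonal tuple $X=(X_1,\dots,X_k)$ on $E^{\oplus k}$ is genuine (for $i\neq j$ at least one of the two operators is scalar in every diagonal slot), and the compression identity $V^*X_iV=\tfrac{1}{k}\bigl[(k-1)cI+(kA_i-(k-1)cI)\bigr]=A_i$ is right. However, your route differs from the paper's. The paper attacks the hard inclusion $\supseteq$ by going all the way down to scalars: it spectrally decomposes each $A_i$, writes it as a convex combination of operators of the form $cuu^*+dI_E$, realizes each such operator as a unitary conjugate of a direct sum of scalars via an isometry built from $e_1\otimes u^*$, and thereby expresses $A$ as a finite matrix convex combination of elements of $\mathbb{P}^k(\mathbb{C})$. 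This shows the stronger fact that already $\mathrm{co}^{\mathrm{mat}}\mathbb{P}^k(\mathbb{C})$ exhausts $\mathbb{P}^k(E)$, which fits the paper's broader theme of extending functions initially defined only on scalar tuples $\mathbb{P}(\mathbb{C})^k$. Your proof instead produces, in one shot, a single commuting \emph{operator} tuple on $E^{\oplus k}$ and a single normalized diagonal isometry whose compression simultaneously recovers all $k$ non-commuting coordinates; this is shorter, entirely explicit, and avoids the spectral decomposition, at the cost of not yielding the finer reduction to scalar tuples. Both proofs exploit the same underlying trick --- arranging that at most one coordinate is non-scalar in each block so that commutativity of the lifted tuple is automatic --- and both fully establish the stated lemma.
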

\begin{proof}
Let $A\in\mathbb{P}^k(E)$ with $\dim(E)=n$, so that $A_i\geq \epsilon I_E$ for a small enough $\epsilon>0$. In spectral decomposition form, we also have
\begin{equation*}
A=\left(\sum_{i=1}^na^1_iu^1_iu^{1*}_i,\ldots,\sum_{i=1}^na^k_iu^k_iu^{k*}_i\right)
\end{equation*}
where $u^i_l$ are eigenvectors and $a^i_l$ are the corresponding eigenvalues of $A_i$. By looking at the above form, it is clear that each $A_i$ is written as a finite convex combination of rank one matrices of the form $cuu^{*}+d I_E$ where $c,d\in\mathbb{P}(\mathbb{C})$ and $u\in E$, $\|u\|=1$. We can write $cuu^*=(e_1\otimes u^*)^*ce_1e_1^*(e_1\otimes u^*)$, an isometric inclusion of $ce_1e_1^*\in\mathbb{P}(\mathbb{C})$, so by extending $e_1\otimes u^*$ into a unitary $U$, we get that $cuu^{*}+d I_E=U^*((c+d)\oplus (\oplus_{j=2}^nd))U$, itself a matrix convex combination. Thus it follows that $A$ is actually a finite convex combination of elements where only one coordinate of the $k$-tuple is not necessarily equal to $z I_E$ for some $z\in\mathbb{P}(\mathbb{C})$, and such elements are also finite matrix convex combinations of elements of $\mathbb{P}^k(\mathbb{C})$.
\end{proof}

Now consider the hypograph
$$\hypo(f):=(\hypo(f)(K)):=(\{(Y,X)\in\mathbb{S}(K)\times\mathbb{CP}(K)^k:Y\leq f(X)\})$$
of a real function $f:\mathbb{P}(\mathbb{C})^k\mapsto\mathbb{P}(\mathbb{C})$ for $\dim(K)<+\infty$. We should think about the real function $f$ and its $\hypo(f)$ as a partially defined free function and its partially defined hypograph.

\begin{theorem}\label{T:convex_hull_saturated}
Let $f:\mathbb{P}(\mathbb{C})^k\mapsto\mathbb{P}(\mathbb{C})$ be a real function. Then $f$ is globally operator monotone if and only if for each $(Y,X)\in\mathrm{co}^{\mathrm{mat}}(\hypo(f))(E)$ with $\dim(E)<+\infty$ and $X\in\mathbb{CP}(E)^k$ we have that $Y\leq f(X)$.
\end{theorem}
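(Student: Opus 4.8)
The plan is to prove the two implications separately, using that the matrix convex hull of a direct‑sum‑closed family is generated by isometric compressions (Definition~\ref{D:MatrixConvexHull}) together with the Jensen‑type inequality of Proposition~\ref{P:monotone_concave}.

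\emph{Forward direction.} First I would record that $(\hypo(f)(K))$ is closed under direct sums: if $Y_i\leq f(X_i)$ with $X_i\in\mathbb{CP}(K_i)^k$, then $X_1\oplus X_2\in\mathbb{CP}(K_1\oplus K_2)^k$ and, by direct‑sum invariance, $Y_1\oplus Y_2\leq f(X_1)\oplus f(X_2)=f(X_1\oplus X_2)$. Hence Definition~\ref{D:MatrixConvexHull} applies, and every $(Y,X)\in\mathrm{co}^{\mathrm{mat}}(\hypo(f))(E)$ has the form $(V^*Y'V,V^*X'V)$ for an isometry $V:E\mapsto K$ and some $(Y',X')\in\hypo(f)(K)$, so that $Y'\leq f(X')$. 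Assuming $f$ globally operator monotone and $X=V^*X'V\in\mathbb{CP}(E)^k$, Proposition~\ref{P:monotone_concave} yields $V^*f(X')V\leq f(V^*X'V)=f(X)$; combining this with $V^*Y'V\leq V^*f(X')V$ gives the chain $Y=V^*Y'V\leq f(X)$, which is the asserted inequality.

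\emph{Converse direction.} Here I would first isolate the essential content of the hull condition. Specialising to the maximal hypograph points $Y'=f(X')$, the assumption forces $V^*f(X')V\leq f(V^*X'V)$ for every isometry $V$ and every $X'\in\mathbb{CP}(K)^k$ whose compression is again commuting and positive; conversely this single inequality recovers the full hull condition, since any $Y'\leq f(X')$ compresses to $V^*Y'V\leq V^*f(X')V\leq f(V^*X'V)$. Thus the hull condition is \emph{exactly} the Jensen inequality \eqref{eq0:P:monotone_concave}, which (as in the proof of Proposition~\ref{P:monotone_concave}, taking $V=[\lambda^{1/2}I,(1-\lambda)^{1/2}I]$) is equivalent to concavity of $f$. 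It therefore remains to deduce global operator monotonicity from operator concavity together with the standing positivity $f|_{\mathbb{P}(\mathbb{C})^k}>0$.

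This last step is the main obstacle. Concavity by itself only delivers the inequality in the ``wrong'' direction: the natural compression $C=B^{-1/2}A^{1/2}$ with $C^*BC=A$ gives $f(A)\geq C^*f(B)C$, not $f(A)\leq f(B)$, so positivity must be used in an essential way. In one variable this is the classical equivalence that a nonnegative operator concave function on $(0,\infty)$ is operator monotone, obtained by writing $f$ as a positive combination of a constant, a linear term, and the operator monotone atoms $\tfrac{\lambda t}{\lambda+t}=\lambda-\tfrac{\lambda^2}{\lambda+t}$, whence monotonicity of $f$ follows termwise. In the present several‑variable commuting framework I would derive the same conclusion from the corresponding Löwner/Pick structure for globally monotone multivariable functions (cf.\ \cite{agler}), using the regularisation $f^{+}(X)=\lim_{\epsilon\to0+}f(X+\epsilon I)$ of Proposition~\ref{P:monotone_concave} to supply the continuity needed to pass to such a representation, and reducing the comparison of two not necessarily jointly commuting tuples $A\leq B\in\mathbb{CP}(E)^k$ to data adapted to their respective joint spectral decompositions; Corollary~\ref{C:monotoneJensen} then guarantees that allowing the dilating maps to be mere contractions costs nothing.
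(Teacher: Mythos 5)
Your forward direction is exactly the paper's argument and is correct: the hypograph is closed under direct sums, so every point of the hull is an isometric compression of a hypograph point, and Proposition~\ref{P:monotone_concave} supplies $V^*f(X')V\leq f(V^*X'V)$.

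The converse direction has a genuine gap. You correctly observe that the hull condition is equivalent to the Jensen inequality \eqref{eq0:P:monotone_concave} at commuting points, but that reduction throws away the information you actually need. The Jensen inequality only constrains compressions $V^*X'V$ that happen to land in $\mathbb{CP}(E)^k$, so the concavity you can extract from it via $V=[\lambda^{1/2}I,(1-\lambda)^{1/2}I]$ lives only along convex combinations that stay inside the commuting tuples --- essentially scalar tuples --- and yields at best concavity and monotonicity of $f$ as a real function on $\mathbb{P}(\mathbb{C})^k$, not \emph{global} operator monotonicity. The real difficulty is that for $A\leq B$ in $\mathbb{CP}(E)^k$ the increment $B-A$ need not commute with $A$, so no path through commuting tuples connects them; and your proposed resolution --- invoking ``the L\"owner/Pick structure for globally monotone multivariable functions (cf.\ \cite{agler})'' --- is circular, since that representation is a consequence of global monotonicity, which is precisely the conclusion being sought.

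The paper closes this gap using the hull itself rather than a representation theorem. Define $h_v(X):=\sup\{v^*Yv:(Y,X)\in\mathrm{co}^{\mathrm{mat}}(\hypo(f))(E)\}$. By Lemma~\ref{L:CoMatHullP} its domain is all of $\mathbb{P}(E)^k$, not just the commuting tuples; it is concave because $\mathrm{co}^{\mathrm{mat}}(\hypo(f))(E)$ is a convex set, nonnegative because $f>0$, norm-continuous as a bounded-below concave function, and it equals $v^*f(\cdot)v$ on $\mathbb{CP}(E)^k$ precisely by the hypothesis being assumed. For $A<B$ in $\mathbb{CP}(E)^k$ one writes $tB=tA+(1-t)\bigl[\tfrac{t}{1-t}(B-A)\bigr]$ with $\tfrac{t}{1-t}(B-A)\in\mathbb{P}(E)^k$ (no commutativity needed on the enlarged domain), so concavity and positivity give $h_v(tB)\geq t\,h_v(A)$; letting $t\to 1-$ and using continuity yields $h_v(B)\geq h_v(A)$ for all $A\leq B$, hence $f(A)\leq f(B)$ since $v$ is arbitrary. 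Replacing your appeal to \cite{agler} with this $h_v$ construction repairs the argument.
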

\begin{proof}
Suppose first that $f$ is globally operator monotone. Let $(Y,X)\in\mathrm{co}^{\mathrm{mat}}(\hypo(f)(E))$ with $\dim(E)<+\infty$ and $X\in\mathbb{CP}(E)^k$. Then by the definition of the matrix convex hull there exists an isometry $W:E\mapsto K$ between the finite dimensional Hilbert spaces $E,K$ and a $(y,x)\in\mathbb{S}(K)\times\mathbb{CP}(K)^k$ with $y\leq f(x)$ such that $Y=W^*yW$ and $X=W^*xW$. Then it follows that $Y\leq W^*f(x)W$, so by Proposition~\ref{P:monotone_concave} we get that $W^*f(x)W\leq f(W^*xW)=f(X)$.

To see the converse implication, consider the function
\begin{equation*}
h_v(X):=\sup\{v^*Yv:(Y,X)\in\mathrm{co}^{\mathrm{mat}}(\hypo(f))(E)\}
\end{equation*}
for $v\in E$ and $X\in\mathbb{P}(E)^k$. Since $\mathrm{co}^{\mathrm{mat}}(\hypo(f))$ is matrix convex, we have that $\mathrm{co}^{\mathrm{mat}}(\hypo(f))(E)$ is a convex set, it follows that $h_v$ is a bounded from below concave, thus by Proposition 3.5.4 in \cite{niculescu}, norm-continuous real valued function. Moreover by the assumption if $X\in\mathbb{CP}(E)^k$ then for each $(Y,X)\in\mathrm{co}^{\mathrm{mat}}(\hypo(f))(E)$ we have that $Y\leq f(X)$, thus we must have $h_v(X)=v^*f(X)v$. It is also clear by the definition of $\mathrm{co}^{\mathrm{mat}}(\hypo(f))(E)$ that $h_v\geq 0$ on its domain which is the whole $\mathbb{P}(E)^k$ by Lemma~\ref{L:CoMatHullP}. Now assume that $A,B\in\mathbb{CP}(E)^k$ and $A<B$. Let $t\in(0,1)$. Then we have that
\begin{equation*}
tB=tA+(1-t)\left[\frac{t}{1-t}(B-A)\right]
\end{equation*}
where $\frac{t}{1-t}(B-A)\in\mathbb{P}(E)^k$. Thus the concavity and positivity of $h_v$ yields
\begin{equation*}
h_v(tB)\geq th_v(A)+(1-t)h_v\left(\frac{t}{1-t}(B-A)\right)\geq th_v(A),
\end{equation*}
so letting $t\to 1-$ in the above implies $h_v(B)\geq h_v(A)$. Then again using the continuity of $h_v$ we obtain $h_v(B)\geq h_v(A)$ as well, when we have $B\geq A$. From this, since $v\in E$ was arbitrary, we obtain $f(B)\geq f(A)$ as desired.
\end{proof}

\section{Free analytic lifts through models}
Let $\mathcal{A}$ denote a Banach space in this section. 
All tensor products in the subsequent sections are understood to be projective, see chapter IV.2. in \cite{takesaki} for more information, 
however this particular choice of cross-norm does not make an essential difference in the calculations. For a vector space $\mathcal{V}$ the map $I_\mathcal{V}$ is understood to be the identity homomorphism. The first result characterizes concavity through isometric conjugations. Such maps are also called Jensen-type maps if they satisfy a similar reversed inequality \cite{hansenMoslehianNajafi}. The characterizing inequality \eqref{eq:P:ConcaveIsom} will play a key role in this section.

\begin{proposition}\label{P:ConcaveIsom}
Let $(D(E))$ with $D(E)\subseteq \mathcal{A}\otimes \mathcal{B}(E)$ denote a self-adjoint matrix convex set and let $F:D(E)\mapsto \mathcal{B}(E)$ be a free function. Then $F$ is operator concave if and only if for each isometry $W:E\mapsto K$ and $X\in D(K)$ we have 
\begin{equation}\label{eq:P:ConcaveIsom}
F\left((I_{\mathcal{A}}\otimes W^*)X(I_{\mathcal{A}}\otimes W)\right)\geq W^*F(X)W.
\end{equation}
\end{proposition}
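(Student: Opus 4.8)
The plan is to prove the two implications separately. The ``if'' direction (inequality \eqref{eq:P:ConcaveIsom} $\Rightarrow$ concavity \eqref{eq0:concavity2}) is a short block computation, while the ``only if'' direction (concavity $\Rightarrow$ \eqref{eq:P:ConcaveIsom}) is the substantial part and relies on a Hansen--Pedersen-type pinching argument.

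For the easy direction, assume \eqref{eq:P:ConcaveIsom}. Given $X,Y\in D(E)$ and $\lambda\in[0,1]$, I would introduce the isometry $W:E\mapsto E\oplus E$, $Wv=(\sqrt{1-\lambda}\,v,\ \sqrt{\lambda}\,v)$, which satisfies $W^*W=I_E$. Since $(D(E))$ is matrix convex it is closed under direct sums, so $X\oplus Y\in D(E\oplus E)$, and one computes directly that $(I_{\mathcal{A}}\otimes W^*)(X\oplus Y)(I_{\mathcal{A}}\otimes W)=(1-\lambda)X+\lambda Y$. Applying \eqref{eq:P:ConcaveIsom} to $X\oplus Y$ and invoking direct sum invariance $F(X\oplus Y)=F(X)\oplus F(Y)$ then yields
\[
F((1-\lambda)X+\lambda Y)\ge W^*\bigl(F(X)\oplus F(Y)\bigr)W=(1-\lambda)F(X)+\lambda F(Y),
\]
which is \eqref{eq0:concavity2}.

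For the converse, assume $F$ is operator concave and fix an isometry $W:E\mapsto K$ and $X\in D(K)$; matrix convexity guarantees $W^*XW\in D(E)$, so the inequality is meaningful. Put $P:=WW^*\in\mathcal{B}(K)$, the projection onto $\operatorname{ran}W$, and let $V:=2P-I_K$, a self-adjoint unitary on $K$. A direct expansion gives the pinching identity
\[
\mathcal{E}(X):=(I_{\mathcal{A}}\otimes P)X(I_{\mathcal{A}}\otimes P)+(I_{\mathcal{A}}\otimes(I-P))X(I_{\mathcal{A}}\otimes(I-P))=\tfrac12 X+\tfrac12(I_{\mathcal{A}}\otimes V)X(I_{\mathcal{A}}\otimes V).
\]
Since $V$ is unitary, unitary invariance of the domain places $(I_{\mathcal{A}}\otimes V)X(I_{\mathcal{A}}\otimes V)\in D(K)$, and convexity of $D(K)$ then gives $\mathcal{E}(X)\in D(K)$. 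Applying concavity \eqref{eq0:concavity2} with $\lambda=\tfrac12$ and then unitary invariance of $F$ (with $V=V^*$, $\mathcal{L}=\mathbb{C}$) yields
\[
F(\mathcal{E}(X))\ge \tfrac12 F(X)+\tfrac12 V F(X) V=PF(X)P+(I-P)F(X)(I-P).
\]

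It remains to conjugate this operator inequality in $\mathcal{B}(K)$ by the isometry $W$ and identify the two sides. Using $W^*P=W^*$ and $W^*(I-P)=0$, the right-hand side collapses to $W^*F(X)W$. For the left-hand side I would note that $\operatorname{ran}W$ reduces $\mathcal{E}(X)$: extending $W$ to a unitary $\Phi=[\,W\ \ J\,]:E\oplus(\operatorname{ran}W)^\perp\mapsto K$, one has $(I_{\mathcal{A}}\otimes\Phi^*)\mathcal{E}(X)(I_{\mathcal{A}}\otimes\Phi)=(W^*XW)\oplus S$, where $S=(I_{\mathcal{A}}\otimes J^*)X(I_{\mathcal{A}}\otimes J)$ is itself an isometric compression of $X$, hence in the domain by matrix convexity. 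Unitary invariance of $F$ together with direct sum invariance then gives $\Phi^* F(\mathcal{E}(X))\Phi=F(W^*XW)\oplus F(S)$, and restricting to the first summand (on which $\Phi$ restricts to $W$) produces $W^*F(\mathcal{E}(X))W=F(W^*XW)$. Chaining these identities with the displayed inequality yields $F(W^*XW)\ge W^*F(X)W$, i.e.\ \eqref{eq:P:ConcaveIsom}. The step I expect to be most delicate is precisely this last identification: one must verify that the block structure of $\mathcal{E}(X)$ with respect to $K=\operatorname{ran}W\oplus(\operatorname{ran}W)^\perp$ is genuinely \emph{reducing} and that both summands $W^*XW$ and $S$ lie in the appropriate domains, so that the free-function axioms can legitimately be used to move $F$ through the compression.
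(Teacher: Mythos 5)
Your proof is correct, and both directions rest on the same engine as the paper's: the ``if'' direction is verbatim the paper's argument, and the ``only if'' direction is the Hansen--Pedersen pinching trick (write a block-diagonal compression as $\tfrac12 X+\tfrac12 VXV$ for a self-adjoint unitary $V$, apply concavity at $\lambda=\tfrac12$, and read off a corner). The implementations differ in a way worth recording. The paper first dilates the isometry $W$ to a unitary $U$ on $E\oplus K$, inserts an arbitrary auxiliary point $A\in D(E)$, and pinches $U^*(X\oplus A)U$ with $\operatorname{diag}(I,-I)$; the payoff is that the pinched operator is literally the direct sum $\bigl((I_{\mathcal A}\otimes W^*)X(I_{\mathcal A}\otimes W)\bigr)\oplus C$, so direct-sum invariance of $F$ hands over the $(1,1)$ corner immediately (the auxiliary $A$ only influences the discarded $(2,2)$ corner). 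You instead stay on $K$ and pinch with the symmetry $V=2WW^*-I$, which eliminates both the dilation and the auxiliary point, but your $\mathcal E(X)$ is only unitarily equivalent to a direct sum, so you must pay for the economy with the final step: extending $W$ to a unitary $\Phi=[\,W\ \ J\,]$, checking that $W^*XW$ and $S=(I_{\mathcal A}\otimes J^*)X(I_{\mathcal A}\otimes J)$ both lie in the domain (matrix convexity), and combining unitary and direct-sum invariance to get $W^*F(\mathcal E(X))W=F(W^*XW)$. You correctly flag this as the delicate step, and your verification of it is sound. Net effect: the two proofs are logically equivalent; yours is marginally leaner in its ingredients, the paper's is marginally leaner in its bookkeeping.
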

\begin{proof}
$(\Rightarrow):$ Let
\begin{equation*}
U:=\left[ \begin{array}{cc}
W & (I-WW^*)^{1/2} \\
(I-W^*W)^{1/2} & -W^* \end{array} \right]=
\left[ \begin{array}{cc}
W & (I-WW^*)^{1/2} \\
0 & -W^* \end{array} \right]
\end{equation*}
denote the unitary dilation of the isometry $W$, i.e. $U^*U=UU^*=I$ on $E\oplus K$. Now choose arbitrary $A\in D(E)$ and let
\begin{equation*}
C:=(I_{\mathcal{A}}\otimes(I-WW^*)^{1/2})X(I_{\mathcal{A}}\otimes(I-WW^*)^{1/2})+(I_{\mathcal{A}}\otimes W)A(I_{\mathcal{A}}\otimes W^*).
\end{equation*}
Then we have
\begin{equation*}
\begin{split}
&(I_{\mathcal{A}}\otimes U^*)\left[ \begin{array}{cc}
X & 0 \\
0 & A \end{array} \right](I_{\mathcal{A}}\otimes U)\\
&=\left[ \begin{array}{cc}
(I_{\mathcal{A}}\otimes W^*)X(I_{\mathcal{A}}\otimes W) & (I_{\mathcal{A}}\otimes W^*)X(I_{\mathcal{A}}\otimes(I-WW^*)^{1/2}) \\
(I_{\mathcal{A}}\otimes(I-WW^*)^{1/2})X(I_{\mathcal{A}}\otimes W) & C \end{array} \right].
\end{split}
\end{equation*}
Also notice that
\begin{equation*}
\begin{split}
\frac{1}{2}(I_{\mathcal{A}}\otimes U^*)&\left[ \begin{array}{cc}
X & 0 \\
0 & A \end{array} \right](I_{\mathcal{A}}\otimes U)+\frac{1}{2}\left[ \begin{array}{cc}
I & 0 \\
0 & -I \end{array} \right](I_{\mathcal{A}}\otimes U^*)\left[ \begin{array}{cc}
X & 0 \\
0 & A \end{array} \right]\\
&\times(I_{\mathcal{A}}\otimes U)\left[ \begin{array}{cc}
I & 0 \\
0 & -I \end{array} \right]=\left[ \begin{array}{cc}
(I_{\mathcal{A}}\otimes W^*)X(I_{\mathcal{A}}\otimes W) & 0 \\
0 & C \end{array} \right].
\end{split}
\end{equation*}
Then we have
\begin{equation*}
\begin{split}
&\left[ \begin{array}{cc}
F((I_{\mathcal{A}}\otimes W^*)X(I_{\mathcal{A}}\otimes W)) & 0 \\
0 & F(C) \end{array} \right]\\
&=F\left(\left[ \begin{array}{cc}
(I_{\mathcal{A}}\otimes W^*)X(I_{\mathcal{A}}\otimes W) & 0 \\
0 & C \end{array} \right]\right)\\
&=F\left(\frac{1}{2}(I_{\mathcal{A}}\otimes U^*)\left[ \begin{array}{cc}
X & 0 \\
0 & A \end{array} \right](I_{\mathcal{A}}\otimes U)\right.\\
&\quad\quad \left.+\frac{1}{2}\left[ \begin{array}{cc}
I & 0 \\
0 & -I \end{array} \right](I_{\mathcal{A}}\otimes U^*)\left[ \begin{array}{cc}
X & 0 \\
0 & A \end{array} \right](I_{\mathcal{A}}\otimes U)\left[ \begin{array}{cc}
I & 0 \\
0 & -I \end{array} \right]\right)\\
&\geq \frac{1}{2}F\left((I_{\mathcal{A}}\otimes U^*)\left[ \begin{array}{cc}
X & 0 \\
0 & A \end{array} \right](I_{\mathcal{A}}\otimes U)\right)\\
&\quad+\frac{1}{2}F\left(\left[ \begin{array}{cc}
I & 0 \\
0 & -I \end{array} \right](I_{\mathcal{A}}\otimes U^*)\left[ \begin{array}{cc}
X & 0 \\
0 & A \end{array} \right](I_{\mathcal{A}}\otimes U)\left[ \begin{array}{cc}
I & 0 \\
0 & -I \end{array} \right]\right)\\
&=\frac{1}{2}(I_{\mathcal{A}}\otimes U^*)\left[ \begin{array}{cc}
F(X) & 0 \\
0 & F(A) \end{array} \right](I_{\mathcal{A}}\otimes U)\\
&\quad+\frac{1}{2}\left[ \begin{array}{cc}
I & 0 \\
0 & -I \end{array} \right](I_{\mathcal{A}}\otimes U^*)\left[ \begin{array}{cc}
F(X) & 0 \\
0 & F(A) \end{array} \right](I_{\mathcal{A}}\otimes U)\left[ \begin{array}{cc}
I & 0 \\
0 & -I \end{array} \right]\\
&=\left[ \begin{array}{cc}
(I_{\mathcal{A}}\otimes W^*)F(X)(I_{\mathcal{A}}\otimes W) & 0 \\
0 & \begin{array}{c} (I_{\mathcal{A}}\otimes(I-WW^*)^{1/2})F(X) \\
 \times(I_{\mathcal{A}}\otimes(I-WW^*)^{1/2}) \\
 +(I_{\mathcal{A}}\otimes W)F(A)(I_{\mathcal{A}}\otimes W^*) \end{array} \end{array} \right].
\end{split}
\end{equation*}
Thus \eqref{eq:P:ConcaveIsom} follows.

$(\Leftarrow):$ For $t\in[0,1]$ let $W=\left[ \begin{array}{c}
(1-t)^{1/2}I_E \\
t^{1/2}I_E \end{array} \right]$ so that $W^*W=I_E$, an isometry. Let $X,Y\in D(E)$. Then by \eqref{eq:P:ConcaveIsom} we have
\begin{equation*}
\begin{split}
&F((1-t)X+tY)=F\left((I_{\mathcal{A}}\otimes W^*)\left[ \begin{array}{cc}
X & 0 \\
0 & Y \end{array} \right](I_{\mathcal{A}}\otimes W)\right)\\
&\geq (I_{\mathcal{A}}\otimes W^*)F\left(\left[ \begin{array}{cc}
X & 0 \\
0 & Y \end{array} \right]\right)(I_{\mathcal{A}}\otimes W)\\
&=(I_{\mathcal{A}}\otimes W^*)\left[ \begin{array}{cc}
F(X) & 0 \\
0 & F(Y) \end{array} \right](I_{\mathcal{A}}\otimes W)\\
&=(1-t)F(X)+tF(Y).
\end{split}
\end{equation*}
\end{proof}

\begin{corollary}\label{C:ConcaveContr}
Under the assumptions of Proposition~\ref{P:ConcaveIsom} if also $0\in D(\mathbb{C})$ and $F(0)\geq 0$, then the equivalence in Proposition~\ref{P:ConcaveIsom} remains true with contractions $W:E\mapsto K$ in \eqref{eq:P:ConcaveIsom}, that is $\|W\|\leq 1$.
\end{corollary}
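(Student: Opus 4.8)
The plan is to upgrade the characterizing inequality \eqref{eq:P:ConcaveIsom} from isometries to contractions. One direction of the asserted equivalence is free: every isometry is a contraction, so if \eqref{eq:P:ConcaveIsom} holds for all contractions then in particular for all isometries, and Proposition~\ref{P:ConcaveIsom} already delivers operator concavity. Thus the only content is the forward implication, namely assuming $F$ operator concave I must produce \eqref{eq:P:ConcaveIsom} for an arbitrary contraction $W:E\mapsto K$, and this is exactly where the extra hypotheses $0\in D(\mathbb{C})$ and $F(0)\geq 0$ will be spent.

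The device I would use is to dilate $W$ to an isometry. Since $\|W\|\leq 1$ we have $I-W^*W\geq 0$, so
\[
\tilde{W}:=\begin{bmatrix} W \\ (I-W^*W)^{1/2}\end{bmatrix}:E\mapsto K\oplus E
\]
satisfies $\tilde{W}^*\tilde{W}=W^*W+(I-W^*W)=I_E$ and is therefore an isometry. I would then set $\tilde{X}:=X\oplus 0\in D(K\oplus E)$, where I use $0\in D(\mathbb{C})$ together with closure under direct sums (and, for infinite dimensional $E$, the matrix convex structure) to guarantee $0\in D(E)$ and hence $\tilde{X}\in D(K\oplus E)$. Now I apply the already-proven Proposition~\ref{P:ConcaveIsom} to the isometry $\tilde{W}$ and the point $\tilde{X}$; note this simultaneously certifies, by matrix convexity, that the compression $(I_{\mathcal{A}}\otimes\tilde{W}^*)\tilde{X}(I_{\mathcal{A}}\otimes\tilde{W})$ lies in $D(E)$, so that $F$ may legitimately be evaluated on it.

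The computation then runs on both sides. On the left the off-diagonal zero block of $\tilde{X}$ contributes nothing, so $(I_{\mathcal{A}}\otimes\tilde{W}^*)\tilde{X}(I_{\mathcal{A}}\otimes\tilde{W})=(I_{\mathcal{A}}\otimes W^*)X(I_{\mathcal{A}}\otimes W)$. On the right, direct sum invariance gives $F(\tilde{X})=F(X)\oplus F(0)$, whence $\tilde{W}^*F(\tilde{X})\tilde{W}=W^*F(X)W+(I_{\mathcal{A}}\otimes(I-W^*W)^{1/2})F(0)(I_{\mathcal{A}}\otimes(I-W^*W)^{1/2})$. Proposition~\ref{P:ConcaveIsom} therefore yields
\[
F\!\left((I_{\mathcal{A}}\otimes W^*)X(I_{\mathcal{A}}\otimes W)\right)\geq W^*F(X)W+(I_{\mathcal{A}}\otimes(I-W^*W)^{1/2})F(0)(I_{\mathcal{A}}\otimes(I-W^*W)^{1/2}).
\]

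The final step is where $F(0)\geq 0$ enters: the residual summand is a conjugate of $F(0)$, hence positive semidefinite, so discarding it only weakens the right-hand side and leaves exactly \eqref{eq:P:ConcaveIsom} for $W$. Equivalently, one may re-run the block matrix argument of Proposition~\ref{P:ConcaveIsom} verbatim with the choice $A=0$; for a genuine contraction the corner $(I-W^*W)^{1/2}$ no longer vanishes, and carrying it through the two identities in that proof reproduces precisely the same $F(0)$-term in the top-left block. I expect the only delicate points to be bookkeeping rather than conceptual: first, correctly tracking this extra term and recognizing that it is exactly the positivity of $F(0)$, and nothing stronger, that licenses dropping it; and second, verifying the domain memberships $0\in D(E)$ and $\tilde{X}\in D(K\oplus E)$ so that all evaluations of $F$ are legal, which follow from $0\in D(\mathbb{C})$ and the matrix convex structure.
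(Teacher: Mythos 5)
Your proof is correct and is essentially the paper's own argument: the paper likewise observes that only the forward implication needs attention, takes $A=0$ in the unitary-dilation computation of Proposition~\ref{P:ConcaveIsom} (legitimate because $0\in D(\mathbb{C})$), and relies on $F(0)\geq 0$ to discard the residual term $(I-W^*W)^{1/2}F(0)(I-W^*W)^{1/2}$. Your primary packaging --- invoking Proposition~\ref{P:ConcaveIsom} as a black box for the column isometry $\tilde{W}$ applied to $X\oplus 0$ --- is just a tidier rearrangement of that same computation, as you yourself note in your closing paragraph.
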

\begin{proof}
Only the $(\Rightarrow)$ implication in Proposition~\ref{P:ConcaveIsom} requires further consideration, since if $\|W\|\leq 1$ only, then $(I-W^*W)^{1/2}$ is not necessarily $0$. However the block operator matrix $U$ is still unitary and since $0\in D(\mathbb{C})$, we can choose $A=0$ in the proof of $(\Rightarrow)$ and the same block operator matrix argumentation goes through leading to \eqref{eq:P:ConcaveIsom}.
\end{proof}

The following lemma has been proved and used by a number of authors before. For its proof we refer to \cite{effros,palfia1}.

\begin{lemma}[Lemma 3.6. \cite{palfia1}]\label{L:existTop}
Suppose $\mathcal{F}$ is a convex set of weak-$*$ continuous affine linear mappings $f:\mathcal{B}^+_1(E)^{*}\mapsto \mathbb{R}$ with respect to a duality. If for each $f\in\mathcal{F}$ there exists a $T\in\mathcal{B}^+_1(E)^{*}$ such that $f(T)\geq 0$, then there exists a $\mathcal{T}\in\mathcal{B}^+_1(E)^{*}$ such that $f(\mathcal{T})\geq 0$ for every $f\in\mathcal{F}$.
\end{lemma}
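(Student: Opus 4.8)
The plan is to read the statement as a nonempty-intersection result and prove it by combining weak-$*$ compactness with a finite-dimensional separation argument, where the convexity of $\mathcal{F}$ supplies the decisive reduction. For each $f\in\mathcal{F}$ set
$$C_f:=\{T\in\mathcal{B}^+_1(E)^*:f(T)\geq 0\}.$$
Since $f$ is weak-$*$ continuous, each $C_f$ is a weak-$*$ closed subset of $\mathcal{B}^+_1(E)^*$, and by hypothesis each $C_f$ is nonempty. The assertion to prove is precisely that $\bigcap_{f\in\mathcal{F}}C_f\neq\emptyset$. Because $\mathcal{B}^+_1(E)^*$ is weak-$*$ compact (Banach--Alaoglu for the duality in force), it suffices to verify the finite intersection property: for every finite family $f_1,\dots,f_n\in\mathcal{F}$ the set $\bigcap_{i=1}^n C_{f_i}$ is nonempty.

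First I would dispatch this finite case. Consider the affine map $\Phi:\mathcal{B}^+_1(E)^*\to\mathbb{R}^n$, $\Phi(T)=(f_1(T),\dots,f_n(T))$. As the continuous affine image of a convex compact set, $K:=\Phi(\mathcal{B}^+_1(E)^*)$ is a compact convex subset of $\mathbb{R}^n$, and $\bigcap_i C_{f_i}\neq\emptyset$ is equivalent to $K\cap\mathbb{R}^n_{\geq 0}\neq\emptyset$. Suppose, for contradiction, that $K$ misses the closed convex cone $\mathbb{R}^n_{\geq 0}$. By the separating hyperplane theorem in $\mathbb{R}^n$ there are a nonzero $\lambda=(\lambda_1,\dots,\lambda_n)$ and a constant $c$ with $\langle\lambda,k\rangle<c\leq\langle\lambda,z\rangle$ for all $k\in K$ and $z\in\mathbb{R}^n_{\geq 0}$. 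Evaluating the right inequality at $z=0$ gives $c\leq 0$, and evaluating it along the rays $te_i$ as $t\to+\infty$ forces $\lambda_i\geq 0$ for every $i$; since $\lambda\neq 0$ this gives $\sum_i\lambda_i>0$. Consequently $\sum_{i=1}^n\lambda_i f_i(T)=\langle\lambda,\Phi(T)\rangle<c\leq 0$ for all $T$, and dividing by $\sum_i\lambda_i>0$ yields a convex combination $g:=\sum_i(\lambda_i/\sum_j\lambda_j)f_i$ with $g(T)<0$ for all $T\in\mathcal{B}^+_1(E)^*$.

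Here the convexity of $\mathcal{F}$ is decisive: $g$ is a convex combination of $f_1,\dots,f_n\in\mathcal{F}$, hence $g\in\mathcal{F}$, so by hypothesis some $T_0$ satisfies $g(T_0)\geq 0$, contradicting $g(T)<0$. Therefore $\bigcap_{i=1}^n C_{f_i}\neq\emptyset$, the finite intersection property holds, and weak-$*$ compactness of $\mathcal{B}^+_1(E)^*$ produces a common point $\mathcal{T}\in\bigcap_{f\in\mathcal{F}}C_f$, i.e. $f(\mathcal{T})\geq 0$ for every $f\in\mathcal{F}$. The only genuine obstacle I anticipate is the compactness input: one must confirm that $\mathcal{B}^+_1(E)^*$ is convex and weak-$*$ compact in the given duality (the subscript $1$ signalling a norm-bounded set, so that Banach--Alaoglu applies) and that each evaluation $T\mapsto f(T)$ is genuinely weak-$*$ continuous, so that the sets $C_f$ are closed. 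Granting these structural facts, the proof is exactly the separation-plus-convexity reduction above, with no further analytic difficulty; an equivalent route would package the same content as a Ky Fan / Sion minimax equality $\max_T\inf_f f(T)=\inf_f\max_T f(T)\geq 0$.
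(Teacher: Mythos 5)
Your proof is correct: the reduction to the finite intersection property via weak-$*$ compactness of $\mathcal{B}^+_1(E)^*$, followed by the separation of the compact convex image $K$ from the cone $\mathbb{R}^n_{\geq 0}$ and the use of convexity of $\mathcal{F}$ to turn the separating functional into a member $g\in\mathcal{F}$ that is everywhere negative, is exactly the standard argument for this lemma. The paper itself does not reprove it but defers to \cite{effros,palfia1}, where the proof follows the same compactness-plus-separation (minimax) scheme you describe, so there is nothing to add beyond your own caveat that $\mathcal{B}^+_1(E)^*$ is convex and weak-$*$ compact, which indeed holds for the positive part of the dual unit ball.
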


\begin{lemma}\label{L:existT}
Let $D=(D(E))$ be a matrix convex set, where $D(E)\subseteq \mathcal{A}\otimes \mathcal{B}(E)$ and $0\in D(\mathbb{C})$. Let a linear functional $\Lambda:\mathcal{A}\otimes \mathcal{B}(N)\mapsto\mathbb{R}$ be given for a fixed $N$. If $\Lambda(X)\leq 1$ for each $X\in D(N)$, then there exists a $T\in\mathcal{B}^+_1(N)^{*}$ 
such that for each Hilbert space $E$, and each $Y\in D(E)$ and each contraction $V:N\mapsto E$ we have
$$\Lambda((I_{\mathcal{A}}\otimes V^*)Y(I_{\mathcal{A}}\otimes V))\leq T(V^*V).$$
\end{lemma}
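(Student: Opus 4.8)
The plan is to obtain $T$ as a simultaneous solution, furnished by Lemma~\ref{L:existTop}, of a convex family of affine inequalities indexed by the data $(E,Y,V)$. For a Hilbert space $E$, an element $Y\in D(E)$ and a contraction $V:N\mapsto E$, introduce the functional
\[
f_{E,Y,V}(T):=T(V^*V)-\Lambda\left((I_{\mathcal{A}}\otimes V^*)Y(I_{\mathcal{A}}\otimes V)\right),\qquad T\in\mathcal{B}^+_1(N)^*.
\]
Because $V^*V\in\mathcal{B}(N)$ is fixed and the $\Lambda$-term is a constant, $f_{E,Y,V}$ is a weak-$*$ continuous affine map on $\mathcal{B}^+_1(N)^*$, so it is of the type handled by Lemma~\ref{L:existTop}; and the assertion to be proved is exactly that there is a single $\mathcal{T}\in\mathcal{B}^+_1(N)^*$ with $f_{E,Y,V}(\mathcal{T})\geq 0$ for all admissible triples $(E,Y,V)$.

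To invoke Lemma~\ref{L:existTop} I must check that the family $\mathcal{F}:=\{f_{E,Y,V}\}$ is convex and that each member is individually satisfiable. Both reduce to the single estimate
\[
\Lambda\left((I_{\mathcal{A}}\otimes W^*)Y(I_{\mathcal{A}}\otimes W)\right)\leq \|W^*W\|\qquad\text{for every contraction }W:N\mapsto E,\ Y\in D(E).
\]
Granting it, individual satisfiability is immediate: for $P:=V^*V\geq 0$ the weak-$*$ compactness of the state space and the weak-$*$ continuity of $T\mapsto T(P)$ yield a state $T_0$ with $T_0(P)=\|P\|$, so $f_{E,Y,V}(T_0)=\|V^*V\|-\Lambda((I_{\mathcal{A}}\otimes V^*)Y(I_{\mathcal{A}}\otimes V))\geq 0$. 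Convexity follows by direct-sum bookkeeping: for two members and $\lambda\in[0,1]$ put $E:=E_1\oplus E_2$, $Y:=Y_1\oplus Y_2\in D(E)$ and $W:=\left[\begin{smallmatrix}\sqrt{\lambda}\,V_1\\ \sqrt{1-\lambda}\,V_2\end{smallmatrix}\right]:N\mapsto E$; then $W^*W=\lambda V_1^*V_1+(1-\lambda)V_2^*V_2\leq I_N$, so $W$ is a contraction, and a one-line computation gives $\lambda f_{E_1,Y_1,V_1}+(1-\lambda)f_{E_2,Y_2,V_2}=f_{E,Y,W}$. Hence $\mathcal{F}$ is itself convex.

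The remaining task, and the main obstacle, is the estimate. For $W=0$ it is trivial. For $W\neq 0$ set $s:=\|W^*W\|\in(0,1]$ and rescale to $W':=s^{-1/2}W$, so that $\|W'^*W'\|=1$ while $(I_{\mathcal{A}}\otimes W'^*)Y(I_{\mathcal{A}}\otimes W')=s^{-1}(I_{\mathcal{A}}\otimes W^*)Y(I_{\mathcal{A}}\otimes W)$; the purpose of rescaling is to drive $W'$ to the boundary of the contraction ball, so that its isometric dilation is tight. Form the isometry $\widehat{W'}:=\left[\begin{smallmatrix}W'\\ (I_N-W'^*W')^{1/2}\end{smallmatrix}\right]:N\mapsto E\oplus N$, for which $\widehat{W'}^{\,*}\widehat{W'}=I_N$ and
\[
(I_{\mathcal{A}}\otimes\widehat{W'}^{\,*})(Y\oplus 0_N)(I_{\mathcal{A}}\otimes\widehat{W'})=(I_{\mathcal{A}}\otimes W'^*)Y(I_{\mathcal{A}}\otimes W').
\]
Here I use $0_N\in D(N)$ — which for finite-dimensional $N$ is forced by $0\in D(\mathbb{C})$ together with direct-sum and unitary invariance — so that $Y\oplus 0_N\in D(E\oplus N)$. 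Matrix convexity, i.e. compression by the isometry $\widehat{W'}$, then places $(I_{\mathcal{A}}\otimes W'^*)Y(I_{\mathcal{A}}\otimes W')$ in $D(N)$, whence its $\Lambda$-value is $\leq 1$ by hypothesis; undoing the scaling yields $\Lambda((I_{\mathcal{A}}\otimes W^*)Y(I_{\mathcal{A}}\otimes W))\leq s=\|W^*W\|$.

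The crux is the sharpness of this bound: the bare dilation only shows that the compressed element lies in $D(N)$ and so gives the useless inequality $\leq 1$, which fails whenever $\|W^*W\|$ is small; the rescaling is exactly what upgrades this to the norm-sharp bound matching $\sup_{T}T(W^*W)=\|W^*W\|$, thereby making every affine inequality satisfiable, and keeping track of the role of $0_N\in D(N)$ in the dilation is the other delicate point. With the estimate established, Lemma~\ref{L:existTop} applied to the convex family $\mathcal{F}$ produces the required $T:=\mathcal{T}$.
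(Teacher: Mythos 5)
Your proof is correct and follows essentially the same route as the paper: the same family of weak-$*$ continuous affine functionals $f_{Y,V}$, the same convexity verification via direct sums and column contractions, and the same satisfiability argument using a norming state together with rescaling $V$ to norm one so that the compression lands in $D(N)$ and $\Lambda$ of it is at most $1$. The only difference is cosmetic: you spell out the isometric dilation $\widehat{W'}$ and the role of $0_N\in D(N)$ explicitly, where the paper simply asserts that the compressed element lies in $D(N)$.
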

\begin{proof}
For a Hilbert space $K$, a point $Y\in D(K)$ and a $V:N\mapsto K$ contraction, define $f_{Y,V}:\mathcal{B}^+_1(N)^{*}\mapsto\mathbb{R}$ by
\begin{equation*}
f_{Y,V}(T):=T(V^*V)-\Lambda((I_{\mathcal{A}}\otimes V^*)Y(I_{\mathcal{A}}\otimes V)).
\end{equation*}
We claim that the collection $\mathcal{F}:=\{f_{Y,V}:Y,V\}$ is a convex set. Let $\lambda_i\geq 0$ for $1\leq i\leq n$ for a fixed integer $n$ and let $\sum_{i=1}^n\lambda_i=1$. Also let $(Y_i,V_i)$ be given where $Y_i\in D(K_i)$ for a Hilbert space $K_i$ and $V_i:N\mapsto K_i$ be a contraction for each $1\leq i\leq n$. Let $Z:=\oplus_{i=1}^nY_i$ and let $F$ denote the column operator matrix with entries $\sqrt{\lambda_i}V_i$. Then $Z\in D(\oplus K_i)$ and
\begin{equation*}
F^*F=\sum_{i=1}^n\lambda_iV_i^*V_i\leq \sum_{i=1}^n\lambda_iI=I.
\end{equation*}
By definition
\begin{equation*}
\sum_{i=1}^n\lambda_i(I_{\mathcal{A}}\otimes V_i^*)Y_i(I_{\mathcal{A}}\otimes V_i)=(I_{\mathcal{A}}\otimes F^*)Z(I_{\mathcal{A}}\otimes F)
\end{equation*}
and
\begin{equation*}
\sum_{i=1}^n\lambda_iT(V_i^*V_i)=T(F^*F)
\end{equation*}
for $T\in\mathcal{B}^+_1(N)^{*}$. Hence
\begin{equation*}
\sum_{i=1}^n\lambda_if_{Y_i,V_i}(T)=f_{Z,F}(T).
\end{equation*}

If $V$ has operator norm 1, by Proposition II.6.3.3. in \cite{blackadar} we can choose a norming state $\gamma\in\mathcal{B}^+_1(N)^{*}$ so that
\begin{equation*}
1=\|V\|^2=\gamma(V^*V).
\end{equation*}
Then for $T=\gamma$ it follows that
\begin{equation*}
f_{Y,V}(T)=T(V^*V)-\Lambda((I_{\mathcal{A}}\otimes V^*)Y(I_{\mathcal{A}}\otimes V))=1-\Lambda((I_{\mathcal{A}}\otimes V^*)Y(I_{\mathcal{A}}\otimes V)).
\end{equation*}
Since $(I_{\mathcal{A}}\otimes V^*)Y(I_{\mathcal{A}}\otimes V)\in D(N)$, the right hand side above is nonnegative. If the operator $V$ does not have norm one, we can rescale it to have norm 1 and follow the same argument to show that $f_{Y,V}(T)\geq 0$. So, for each $f_{Y,V}$ there exists a $T\in\mathcal{B}^+_1(N)^{*}$ such that $f_{Y,V}(T)\geq 0$, moreover each $f_{Y,V}$ is weak-$*$ continuous. Thus, by Lemma~\ref{L:existTop} there exists a $\mathcal{T}\in\mathcal{B}^+_1(N)^{*}$ such that $f_{Y,V}(\mathcal{T})\geq 0$ for every $Y$ and $V$.
\end{proof}



Similarly to the finite dimensional case of Definition~\ref{D:MatrixConvexHull}, given a disjoint union of sets $(C(E)\subseteq \mathcal{A}\otimes \mathcal{B}(E))$ for each Hilbert space $E$ closed under direct sums, its matrix convex hull is given as
$$\mathrm{co}^{\mathrm{mat}}C(E):=\bigcup_{K\text{ a Hilbert space}}\{V^*XV:X\in C(K),V:E\mapsto K\text{ an isometry}\}.$$
If $0\in C(\mathbb{C})$ then we also have
$$\mathrm{co}^{\mathrm{mat}}C(E)=\bigcup_{K\text{ a Hilbert space}}\{V^*XV:X\in C(K),V:E\mapsto K, \|V\|\leq 1\}.$$

Given a collection of sets $(D(E)\subseteq \mathcal{A}\otimes \mathcal{B}(E))$ closed under direct sums and a collection of functions $F:D(E)\mapsto\mathcal{B}(E)$ preserving direct sums, we consider its hypograph
$$\hypo(F):=(\hypo(F)(E)):=(\{(Y,X)\in\mathcal{B}(E)\times D(E):Y\leq f(X)\}).$$

\begin{proposition}\label{P:partialConcaveHypoGr}
Let a collection of self-adjoint sets $(D(E)\subseteq \mathcal{A}\otimes \mathcal{B}(E))$ closed under direct sums and a collection of functions $F:D(E)\mapsto\mathcal{B}(E)$ preserving direct sums be given. Then for each isometry $W:E\mapsto K$ and $X\in D(K)$ such that $(I_{\mathcal{A}}\otimes W^*)X(I_{\mathcal{A}}\otimes W)\in D(E)$ we have that
\begin{equation}\label{eq:P:partialConcaveHypoGr}
F\left((I_{\mathcal{A}}\otimes W^*)X(I_{\mathcal{A}}\otimes W)\right)\geq W^*F(X)W,
\end{equation}
if and only if for each $(Y,X)\in\mathrm{co}^{\mathrm{mat}}(\hypo(F))(E)$ with $X\in D(E)$ we have that $Y\leq F(X)$.

Moreover if $0\in D(\mathbb{C})$ and $F(0)\geq 0$ then the statement holds with contractions $W:E\mapsto K$ in \eqref{eq:P:partialConcaveHypoGr}.
\end{proposition}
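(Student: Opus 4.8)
The plan is to reduce both implications to the explicit description of the matrix convex hull of a direct-sum-closed collection recalled just above the statement, applied to the collection $\hypo(F)$ regarded as a direct-sum-closed collection of operators with coefficients in the Banach space $\mathbb{C}\oplus\mathcal{A}$, so that the componentwise isometric conjugation $V^*(Y,X)V=\bigl(V^*YV,(I_{\mathcal{A}}\otimes V^*)X(I_{\mathcal{A}}\otimes V)\bigr)$ is exactly the operation appearing in that description. First I would check that $\hypo(F)$ is closed under direct sums: if $Y_i\leq F(X_i)$ with $X_i\in D(E_i)$ for $i=1,2$, then $X_1\oplus X_2\in D(E_1\oplus E_2)$ since $D$ is closed under direct sums, and $Y_1\oplus Y_2\leq F(X_1)\oplus F(X_2)=F(X_1\oplus X_2)$ since $F$ preserves direct sums and the partial order is preserved under $\oplus$. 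This is what licenses the use of the matrix convex hull formula for $\hypo(F)$.

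For the implication that \eqref{eq:P:partialConcaveHypoGr} forces saturation, I would take any $(Y,X)\in\mathrm{co}^{\mathrm{mat}}(\hypo(F))(E)$ with $X\in D(E)$ and unwind the hull formula: there exist a Hilbert space $K$, an isometry $V:E\mapsto K$, and $(y,x)\in\hypo(F)(K)$, i.e.\ $x\in D(K)$ and $y\leq F(x)$, with $Y=V^*yV$ and $X=(I_{\mathcal{A}}\otimes V^*)x(I_{\mathcal{A}}\otimes V)$. Since $X\in D(E)$, the hypothesis of \eqref{eq:P:partialConcaveHypoGr} is met with $W=V$, giving $F(X)\geq V^*F(x)V\geq V^*yV=Y$, as wanted.

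Conversely, assuming saturation, I would produce the Jensen-type inequality directly: given an isometry $W:E\mapsto K$ and $X\in D(K)$ with $(I_{\mathcal{A}}\otimes W^*)X(I_{\mathcal{A}}\otimes W)\in D(E)$, observe that the trivial pair $(F(X),X)$ lies in $\hypo(F)(K)$, hence its conjugate $\bigl(W^*F(X)W,(I_{\mathcal{A}}\otimes W^*)X(I_{\mathcal{A}}\otimes W)\bigr)$ lies in $\mathrm{co}^{\mathrm{mat}}(\hypo(F))(E)$ and has second component in $D(E)$; the saturation hypothesis then reads $W^*F(X)W\leq F\bigl((I_{\mathcal{A}}\otimes W^*)X(I_{\mathcal{A}}\otimes W)\bigr)$, which is precisely \eqref{eq:P:partialConcaveHypoGr}.

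For the contraction refinement, I would only replace the isometry description of the hull by its contraction counterpart, which is valid once $0$ belongs to the base level of the collection; here $(0,0)\in\hypo(F)(\mathbb{C})$ precisely because $0\in D(\mathbb{C})$ and $F(0)\geq 0$, so the alternative hull formula applies to $\hypo(F)$ and both arguments above go through verbatim with isometries replaced by contractions. The only points needing care—and the closest thing to an obstacle—are the bookkeeping of the componentwise conjugation on the product space $\mathcal{B}(E)\times(\mathcal{A}\otimes\mathcal{B}(E))$ and the verification that $\hypo(F)$ is direct-sum closed; once these are in place the statement is a direct unwinding of the definitions with no analytic input.
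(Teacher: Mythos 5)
Your proof is correct and follows exactly the route the paper intends: the paper's own ``proof'' is only the one-line remark that the claim is a straightforward unwinding of the matrix convex hull formula (as in Theorem~\ref{T:convex_hull_saturated}), and your argument --- checking that $\hypo(F)$ is direct-sum closed, unwinding $(Y,X)=V^*(y,x)V$ for the forward direction, conjugating $(F(X),X)$ for the converse, and invoking the contraction form of the hull via $(0,0)\in\hypo(F)(\mathbb{C})$ --- is precisely that unwinding, supplied in full detail.
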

\begin{proof}
A straightforward argumentation similar to the proof of Theorem~\ref{T:convex_hull_saturated} based on the expression of the matrix convex hull.
\end{proof}

\begin{remark}
For unitary $W$ we have that $W^{-1}$ is also an isometry, thus using \eqref{eq:P:partialConcaveHypoGr} twice, we can see that in this special case \eqref{eq:P:partialConcaveHypoGr} holds with equality. In \cite{hansenMoslehianNajafi} it was shown that if a map satisfies \eqref{eq:P:partialConcaveHypoGr} for all contractions $W$ and convexity (thus called a map of Jensen-type, see Definition 1.1. \cite{hansenMoslehianNajafi}), then it preserves direct sums since the proof of Lemma 3.1. (i) in \cite{hansenMoslehianNajafi} goes through. Their proof carries over to our case with minor modifications if the domain $(D(E))$ is matrix convex, thus in this case the direct sum invariance of $F$ in Proposition~\ref{P:partialConcaveHypoGr} can be dropped.
\end{remark}



\begin{proposition}\label{P:separating_pencil}
Let $(D(E))\ni 0$ and $F$ be as in Proposition~\ref{P:partialConcaveHypoGr} with $F|_D>0$. Assume that $\mathrm{co}^{\mathrm{mat}}(D)(E)$ has nonempty interior for each $E$. Let $N$ be a Hilbert space. Then for each interior point $A\in D(N)$ and each unit vector $v\in N$ there exists a completely bounded affine linear map $L_{F,A,v}:(\mathcal{B}(E),\mathcal{A}\otimes\mathcal{B}(E))\mapsto \mathcal{B}(N)^*\otimes\mathcal{B}(E)$ given as
\begin{equation*}
L_{F,A,v}(Y,X):=T(F,A,v)\otimes I_E-vv^*\otimes Y+\Lambda_{F,A,v}(X),
\end{equation*}
where $0\leq T(F,A,v)\in\mathcal{B}(N)^{*}$ and $\Lambda_{F,A,v}:\mathcal{A}\mapsto \mathcal{B}(N)^*$ is a self-adjoint completely bounded linear map, such that
\begin{itemize}
\item[(a)]$T(F,A,v)(I_N)=v^*F(A)v-\Lambda_{F,A,v}(A)$ and there exists $\epsilon>0$ such that $(1+\epsilon)A\in \mathrm{co}^{\mathrm{mat}}(D)(N)$ and $-\Lambda_{F,A,v}(A)\leq\frac{v^*F(A)v-v^*F((1+\epsilon)A)v}{\epsilon}$;
\item[(b)]For all $(Y,X)\in\hypo(F)$ we have $L_{F,A,v}(Y,X)\geq 0$;
\item[(c)]$\gamma^*L_{F,A,v}(F(A),A)\gamma=0$ where $\gamma=I_{N}$;
\item[(d)]For every $X$ in the interior of $\mathrm{co}^{\mathrm{mat}}(D)(E)$ there exists an $\epsilon>0$ such that $\left\langle V,L_{F,A,v}(0,X)V\right\rangle\geq \epsilon T(F,A,v)(V^*V)$.
\end{itemize}
\end{proposition}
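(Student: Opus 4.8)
The plan is to build $L_{F,A,v}$ as a completely positive affine pencil that supports the matrix convex hull $\mathrm{co}^{\mathrm{mat}}(\hypo(F))$ at the boundary point $(F(A),A)$, first producing a scalar supporting functional at the single level $N$ and then promoting it to a $\mathcal{B}(N)^*$-valued, hence completely bounded, object via \lemref{L:existT}. First I would record the reduction that makes (b) manageable: since $vv^*\geq 0$ and $F(X)-Y\geq 0$ for $(Y,X)\in\hypo(F)$, positivity of $L_{F,A,v}(Y,X)$ follows from positivity of $L_{F,A,v}(F(X),X)$, so it suffices to arrange $T(F,A,v)\otimes I_E+\Lambda_{F,A,v}(X)\geq vv^*\otimes F(X)$ for every $E$ and every $X\in D(E)$.

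For the scalar separation I would work on level $N$, where by matrix convexity $\mathrm{co}^{\mathrm{mat}}(\hypo(F))(N)$ is an ordinary convex subset of the self-adjoint part of $(\mathbb{R}\oplus\mathcal{A})\otimes\mathcal{B}(N)$, the $\mathbb{R}$ summand carrying the $Y$-coordinate, and its $X$-projection is $\mathrm{co}^{\mathrm{mat}}(D)(N)$, which has nonempty interior by hypothesis. As in the proof of \thmref{T:convex_hull_saturated}, the function $h_v(X):=\sup\{v^*Yv:(Y,X)\in\mathrm{co}^{\mathrm{mat}}(\hypo(F))(N)\}$ is concave and, by Proposition 3.5.4 in \cite{niculescu}, norm-continuous on the interior; by the hypograph conclusion of Proposition~\ref{P:partialConcaveHypoGr} it satisfies $h_v(A)=v^*F(A)v$ at the interior point $A\in D(N)$. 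A superdifferential of $h_v$ at $A$ then yields a bounded self-adjoint functional $\ell$ on $\mathcal{A}\otimes\mathcal{B}(N)$ with $v^*Yv\leq v^*F(A)v+\ell(X-A)$ for all $(Y,X)\in\mathrm{co}^{\mathrm{mat}}(\hypo(F))(N)$. Currying $\ell$ (which is legitimate for the projective tensor product) defines the self-adjoint map $\Lambda_{F,A,v}:\mathcal{A}\mapsto\mathcal{B}(N)^*$ by $\Lambda_{F,A,v}(a):=\ell(a\otimes\,\cdot\,)$, and the positive constant $c_0:=v^*F(A)v-\Lambda_{F,A,v}(A)$ records the value in (a); evaluating the supporting inequality at $X=(1+\epsilon)A\in\mathrm{co}^{\mathrm{mat}}(D)(N)$, admissible since $A$ is interior, gives the quantitative bound on $-\Lambda_{F,A,v}(A)$ in (a).

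Next I would uniformize over all $E$. Applying \lemref{L:existT} to the matrix convex set $\mathrm{co}^{\mathrm{mat}}(\hypo(F))$, which contains $0$ because $F(0)\geq 0$, and to the normalized scalar functional $(Y,X)\mapsto(v^*Yv-\ell(X))/c_0$, which is $\leq 1$ on level $N$ by the previous step, produces $T'\in\mathcal{B}^+_1(N)^*$ with $(v^*V^*F(X)Vv-\ell((I_{\mathcal{A}}\otimes V^*)X(I_{\mathcal{A}}\otimes V)))/c_0\leq T'(V^*V)$ for every $E$, every $X\in D(E)$ and every contraction $V:N\mapsto E$. I would then set $T(F,A,v):=c_0T'$; testing at $V=I_N$ and the point $(F(A),A)$ forces $T'(I_N)=1$, which simultaneously yields the normalization $T(F,A,v)(I_N)=c_0$ in (a) and the equality (c). Identifying a vector $\xi\in N\otimes E$ with a scalar multiple of a contraction $V:N\mapsto E$ through the pairing $\langle\xi,(S\otimes R)\xi\rangle$, this family of scalar inequalities becomes exactly $\langle\xi,(T(F,A,v)\otimes I_E+\Lambda_{F,A,v}(X)-vv^*\otimes F(X))\xi\rangle\geq 0$ for all $\xi$, which is the complete positivity needed for (b); the uniformity over $E$ is precisely the complete boundedness of $\Lambda_{F,A,v}$.

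Finally, for (d) I would exploit that an interior point $X$ of $\mathrm{co}^{\mathrm{mat}}(D)(E)$ admits a whole norm-ball of competitors in the hull, so the supporting inequality carries a uniform slack; perturbing $X$ shows that $\langle V,L_{F,A,v}(0,X)V\rangle=T(F,A,v)(V^*V)+\langle V,\Lambda_{F,A,v}(X)V\rangle$ exceeds $T(F,A,v)(V^*V)$ by a factor bounded below by some $\epsilon>0$. I expect the main obstacle to be the uniformization step: verifying that the scalar family delivered by \lemref{L:existT} reassembles exactly into the operator complete-positivity inequality while tracking constants so that $T(F,A,v)$ is positive with the prescribed value on $I_N$ and $\Lambda_{F,A,v}$ emerges self-adjoint and completely bounded. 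The delicate points there are the correct identification of vectors $\xi$ with contractions $V$ and the tightness at the base point that produces (c).
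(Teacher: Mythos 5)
Your proposal follows essentially the same route as the paper's proof: the concave, norm-continuous function $h_v$, a Hahn--Banach subgradient at the interior point $A$, normalization by the positive constant $h_v(A)-\lambda(A)$, uniformization over all levels $E$ via Lemma~\ref{L:existT}, and the same evaluations at $(1+\epsilon)A$, at $(F(A),A)$ with $V=I_N$, and at $rX$ for $r>1$ to obtain (a), (c) and (d). The minor variations (reducing (b) to the graph point $(F(X),X)$, deducing $T'(I_N)=1$ from the tightness at the base point rather than taking it from the state normalization) are cosmetic and do not change the argument.
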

\begin{proof}
Define the real valued function $h_v:\mathrm{co}^{\mathrm{mat}}(D)(N)\mapsto \mathbb{R}$ as $h_v(X):=\sup\{v^*Yv:(Y,X)\in\mathrm{co}^{\mathrm{mat}}(\hypo(F)(N))\}$. Since $\mathrm{co}^{\mathrm{mat}}(\hypo(F))$ is matrix convex, we have that $\mathrm{co}^{\mathrm{mat}}(\hypo(F))(E)$ is a convex set, also $F|_D\geq 0$, so it follows that $h_v$ is a bounded from below concave function on the real Banach-space of the self-adjoint part of $\mathcal{A}\otimes \mathcal{B}(N)$, thus norm-continuous by Proposition 3.5.4 in \cite{niculescu}. Moreover by Proposition~\ref{P:partialConcaveHypoGr} if $X\in D(N)$ then for each $(Y,X)\in\mathrm{co}^{\mathrm{mat}}(\hypo(F))(N)$ we have that $Y\leq F(X)$, thus we must have $h_v(X)=v^*F(X)v$ for all $X\in D(N)$.

It follows from the supporting hyperplane version of the Hahn-Banach theorem, more precisely Theorem 7.12 and 7.16 \cite{aliprantis}, that the norm-continuous convex function $g(X):=-h_v(X)$ has a subgradient at each interior point of its domain, thus at $A$. 
That is, there exists a self-adjoint continuous linear functional $\lambda\in (\mathcal{A}\otimes \mathcal{B}(N))^{*}$ such that
\begin{equation}\label{eq:P:separating_pencil1}
h_v(X)-h_v(A)\leq \lambda(X-A)
\end{equation}
for $X\in \mathrm{co}^{\mathrm{mat}}(D)(N)$ and for $X=A$ we have equality. Now let $(Y,X)\in\mathrm{co}^{\mathrm{mat}}(\hypo(F))(N)$. Then it follows from \eqref{eq:P:separating_pencil1} and the definition of $h_v$ that
\begin{equation}\label{eq:P:separating_pencil2}
v^*Yv-\lambda(X)\leq h_v(A)-\lambda(A).
\end{equation}

Notice that by the assumption we have $0\in D(\mathbb{C})$ and $F>0$, thus $0\in\mathrm{co}^{\mathrm{mat}}(\hypo(F))(E)$ for any Hilbert space $E$ and $h_v(A)-\lambda(A)>0$. Thus the linear functional $\Lambda(Y,X):=\frac{1}{h_v(A)-\lambda(A)}\left(v^*Yv-\lambda(X)\right)$ satisfies $\Lambda(Y,X)\leq 1$ for $(Y,X)\in\mathrm{co}^{\mathrm{mat}}(\hypo(F))(N)$, also $\mathrm{co}^{\mathrm{mat}}(\hypo(F))$ is a matrix convex set. Thus by Lemma~\ref{L:existT} there exists a $0\leq T\in\mathcal{B}^+_1(N)^{*}$ such that $T(I_N)=1$ and for any contraction $V:N\mapsto E$ we have
\begin{equation}\label{eq:P:separating_pencil3}
0\leq (h_v(A)-\lambda(A))T(V^*V)-v^*V^*YVv+\lambda((I_{\mathcal{A}}\otimes V^*)X(I_{\mathcal{A}}\otimes V))
\end{equation}
for any $(Y,X)\in\hypo(F)(E)$, moreover by \eqref{eq:P:separating_pencil1} choosing $V=I_{N}$ and $Y=F(A),X=A$ we get
\begin{equation}\label{eq:P:separating_pencil3.1}
0=(h_v(A)-\lambda(A))T(I_N)-v^*F(A)v+\lambda((I_{\mathcal{A}}\otimes I_N)A(I_{\mathcal{A}}\otimes I_N)).
\end{equation}

Now define
\begin{equation*}
T(F,A,v):=(h_v(A)-\lambda(A))T.
\end{equation*}
Next, by assumption $\mathrm{co}^{\mathrm{mat}}(D)(E)$ contains an open neighborhood of $0$ in $\mathcal{A}\otimes \mathcal{B}(E)$, thus there exists a $\hat{\rho}>0$ such that $B(0,\hat{\rho})\subseteq \mathrm{co}^{\mathrm{mat}}(D)(E)$. Thus by \eqref{eq:P:separating_pencil3} and norm continuity we have that
\begin{equation*}
0\leq T(F,A,v)(V^*V)+\lambda((I_{\mathcal{A}}\otimes V^*)X(I_{\mathcal{A}}\otimes V))
\end{equation*}
for $X\in\overline{B}(0,\hat{\rho})$ where $\overline{B}(0,\hat{\rho})$ denotes the norm closure of $B(0,\hat{\rho})$. Moreover $X\in\overline{B}(0,\hat{\rho})$ if and only if $-X\in\overline{B}(0,\hat{\rho})$, so we also have
\begin{equation*}
0\leq T(F,A,v)(V^*V)-\lambda((I_{\mathcal{A}}\otimes V^*)X(I_{\mathcal{A}}\otimes V)).
\end{equation*}
Then from the above it follows that
\begin{equation}\label{eq:P:separating_pencil4}
-T(F,A,v)(V^*V)\leq\lambda((I_{\mathcal{A}}\otimes V^*)X(I_{\mathcal{A}}\otimes V))\leq T(F,A,v)(V^*V)
\end{equation}
for $X\in\overline{B}(0,\hat{\rho})$. This together with Theorem IV.2.3. \cite{takesaki} ensure that the transpose map $\Lambda_{F,A,v}:\mathcal{A}\mapsto \mathcal{B}(N)^*$ of $\lambda\in(\mathcal{A}\otimes\mathcal{B}(N))^*$ is completely bounded and self-adjoint since $\lambda$ is a self-adjoint linear functional.

Consider the Hilbert space $\overline{\mathcal{B}(N,E)_{T(F,A,v)}}$ that we obtain by completing the quotient space $\mathcal{B}(N,E)/\{V\in\mathcal{B}(N,E):T(F,A,v)(V^*V)=0\}$ equipped with the positive definite Hermitian form
\begin{equation}\label{eq:P:separating_pencil5}
\left\langle W,V\right\rangle_{T(F,A,v)}:=T(F,A,v)(W^*V)
\end{equation}
for $W,V\in\mathcal{B}(N,E)$. 
 Then the right hand side of \eqref{eq:P:separating_pencil3} determines a quadratic form in $V\in\mathcal{B}(N,E)$, which gives rise to the densely defined symmetric linear operator
\begin{equation*}
\begin{split}
\left\langle W,L_{F,A,v}(Y,X)V\right\rangle_{T(F,A,v)}&:=T(F,A,v)(W^*V)-v^*W^*YVv\\
&\quad +\lambda((I_\mathcal{A}\otimes W^*)X(I_\mathcal{A}\otimes V))
\end{split}
\end{equation*}
for $V,W\in\mathcal{B}(N,E),Y\in\mathcal{B}(E)$ and $X\in\mathcal{A}\otimes \mathcal{B}(E)$. 
Then (b) and (c) of the assertion follows from \eqref{eq:P:separating_pencil3} and \eqref{eq:P:separating_pencil3.1} respectively and they also yield the first equality in (a). 

Furthermore inequality \eqref{eq:P:separating_pencil4} ensures that $L_{F,A,v}$ is a completely bounded affine linear map that admits the continuous linear extension $L_{F,A,v}(Y,X)$ which then is a bounded self-adjoint operator acting on $\overline{\mathcal{B}(N,E)_{T(F,A,v)}}$.

To see the remaining parts of (a), first we realize that by assumption $A$ is in the interior of $\mathrm{co}^{\mathrm{mat}}(D)(N)$, thus there exists an $\epsilon>0$ such that $X:=(1+\epsilon)A$ is in $\mathrm{co}^{\mathrm{mat}}(D)(N)$ as well. Then choosing $Y=F((1+\epsilon)A), X=(1+\epsilon)A$, from \eqref{eq:P:separating_pencil1} we calculate
\begin{equation*}
\begin{split}
0&\leq h_v((1+\epsilon)A)\leq h_v(A)-\lambda(A)+(1+\epsilon)\lambda(A)\\
\epsilon h_v(A)-\epsilon \lambda(A)&\leq (1+\epsilon)h_v(A)-h_v((1+\epsilon)A)\\
h_v(A)-\lambda(A)&\leq h_v(A)+\frac{h_v(A)-h_v((1+\epsilon)A)}{\epsilon},
\end{split}
\end{equation*}
thus the last inequality in (a) follows, since
\begin{equation*}
T(F,A,v)(I_N)=h_v(A)-\lambda(A).
\end{equation*}

Now to see (d), by assumption $V\in\overline{\mathcal{B}(N,E)_{T(F,A,v)}}$ and $X$ is in the interior of $\mathrm{co}^{\mathrm{mat}}(D)(E)$, thus there exists an $\epsilon>0$ such that $B(X,\epsilon)\subseteq \mathrm{co}^{\mathrm{mat}}(D)(E)$. Then there exists an $r>1$, such that $rX\in B(X,\epsilon)$. Let $c:=\lambda((I_{\mathcal{A}}\otimes V^*)X(I_{\mathcal{A}}\otimes V))$. Then by \eqref{eq:P:separating_pencil3} we have that
\begin{equation*}
0\leq\left\langle V,\left(L_{F,A,v}(0,rX\right)V\right\rangle=T(F,A,v)(V^*V)+rc,
\end{equation*}
thus
\begin{equation}\label{eq:P:separating_pencil6}
\begin{split}
0&<\left(1-\frac{1}{r}\right)T(F,A,v)(V^*V)\\
&\leq T(F,A,v)(V^*V)+c\\
&=T(F,A,v)(V^*V)+\lambda((I_{\mathcal{A}}\otimes V^*)X(I_{\mathcal{A}}\otimes V))\\
&=\left\langle V,L_{F,A,v}(0,X)V\right\rangle
\end{split}
\end{equation}
proving (d).
\end{proof}

\begin{remark}\label{R:LocallyConvexSpc}
In order to allow locally convex vector spaces $\mathcal{A}$ in Proposition~\ref{P:separating_pencil}, one needs to establish the continuity of $h_v$. This can be done using Proposition 4.4. in \cite{cobzas} which generalizes Proposition 3.5.4 in \cite{niculescu} to locally convex vector spaces under the same assumptions.
\end{remark}


Let $(D(E))\ni 0$ be as in Proposition~\ref{P:partialConcaveHypoGr}. Then for a Hilbert space $E$ and a dense set $E_0\in \{x\in E: \|x\|=1\}$ define the auxiliary vector space
\begin{equation*}
\mathcal{H}_{E,0}:=\bigoplus_{(X,v)\in (D(E),E_0)}E
\end{equation*}
and its completion $\mathcal{H}_{E}$ with respect to the usual inherited direct sum inner product. We denote by $I_{(X,v)}\in\mathcal{B}(\mathcal{H}_E,E)$ the isometry that equals to $I_E-vv^*$ on the $(X,v)$ slot and $0$ elsewhere.

\begin{corollary}\label{C:OperatorModel}
Let $(D(E))\ni 0$ and $F$ be as in Proposition~\ref{P:partialConcaveHypoGr} with $F|_D>0$. Fix a Hilbert space $E$ and an $\eta>0$. Assume that $\mathrm{co}^{\mathrm{mat}}(D)(E)$ has nonempty interior for $E$. Then there exists a vector $e\in\mathcal{H}_{E}$ with $\|e\|=1$, a completely bounded affine map $L_{F}:\mathcal{A}\otimes\mathcal{B}(E)\mapsto \mathcal{B}(\mathcal{H}_E)^*\otimes\mathcal{B}(E)$ given as
\begin{equation*}
L_{F}(X):=T_F\otimes I_E+\Lambda_{F}(X),
\end{equation*}
where $0\leq T_F\in\mathcal{B}(\mathcal{H}_E)^{*}$ and $\Lambda_{F}:\mathcal{A}\mapsto \mathcal{B}(\mathcal{H}_E)^*$ is a self-adjoint completely bounded linear map that is completely absolutely continuous with respect to $T_F$, such that
\begin{itemize}
\item[(a)]For all $X\in\mathrm{co}^{\mathrm{mat}}(D)(E)$ we have $L_{F}(X)\geq 0$;
\item[(b)]For all $(1+\eta)X\in D(E)$ in the interior of $\mathrm{co}^{\mathrm{mat}}(D)(E)$ and $v\in E$ we have
\begin{equation}\label{eq:C:OperatorModel}
\left\langle W,L_F(X)(I_{(X,v)}+ve^*)\right\rangle_{T_F}=e^*W^*F(X)v
\end{equation}
for all $W\in\overline{\mathcal{B}(\mathcal{H}_E,E)_{T_F}}$ with the notation of \eqref{eq:P:separating_pencil5}, and there exists an $\epsilon>0$ such that $\left\langle W,L_{F}(X)W\right\rangle_{T_F}\geq \epsilon T_F(W^*W)$.
\end{itemize}
\end{corollary}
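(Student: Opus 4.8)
The plan is to assemble the single model pencil $L_F$ out of the family of pencils $L_{F,A,v}$ produced by Proposition~\ref{P:separating_pencil}, one pencil for each index of the direct sum defining $\mathcal{H}_E$. Concretely, for every slot $(X',v')$ with $X'$ an interior point of $\mathrm{co}^{\mathrm{mat}}(D)(E)$ satisfying $(1+\eta)X'\in D(E)$, I would invoke Proposition~\ref{P:separating_pencil} with $N=E$, $A=X'$ and $v=v'$ to obtain the positive functional $T(F,X',v')$, the self-adjoint completely bounded map $\Lambda_{F,X',v'}$, and the pencil $L_{F,X',v'}(0,\,\cdot\,)=T(F,X',v')\otimes I_E+\Lambda_{F,X',v'}(\,\cdot\,)$. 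I then define $T_F$ and $\Lambda_F$ as the aggregate of this slot data, so that the Hilbert space $\overline{\mathcal{B}(\mathcal{H}_E,E)_{T_F}}$ of \eqref{eq:P:separating_pencil5} decomposes as the weighted direct sum of the slot spaces $\overline{\mathcal{B}(E,E)_{T(F,X',v')}}$ and $L_F(X)$ acts on it as the direct sum of the $L_{F,X',v'}(0,X)$. The weights are fixed once and for all, independently of $X$, and normalized so that $T_F$ is a bounded positive functional on $\mathcal{B}(\mathcal{H}_E)$ and so that the slot-wise bound \eqref{eq:P:separating_pencil4} aggregates to complete boundedness of $\Lambda_F$ together with its complete absolute continuity with respect to $T_F$.

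With this in place, assertion (a) and the coercivity clause of (b) are inherited slot by slot from property (d) of Proposition~\ref{P:separating_pencil}. The key point is that the constant there, $\epsilon=1-1/r$ with $r>1$ chosen so that $rX$ still lies in $\mathrm{co}^{\mathrm{mat}}(D)(E)$, depends only on the position of $X$ relative to $\mathrm{co}^{\mathrm{mat}}(D)(E)$ and not on the slot $(X',v')$; hence it is uniform across the direct sum and yields $\langle W,L_F(X)W\rangle_{T_F}\ge\epsilon\,T_F(W^*W)$ for interior $X$. Summing the slot-wise inequalities $L_{F,X',v'}(0,X)\ge0$ gives $L_F(X)\ge0$ for interior $X\in\mathrm{co}^{\mathrm{mat}}(D)(E)$, and the affine norm-continuity of $X\mapsto L_F(X)$ extends this to all of $\mathrm{co}^{\mathrm{mat}}(D)(E)$, proving (a).

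The crux is the realization formula \eqref{eq:C:OperatorModel}. The mechanism I would use is that testing $L_F(X)$ against $I_{(X,v)}+ve^*$ should localize the computation to the slot indexed by $(X,v)$, where Proposition~\ref{P:separating_pencil} already encodes the value of $F$: by property (c), with $\gamma=I_E$, one has $\langle I_E,L_{F,X,v}(0,X)I_E\rangle_{T(F,X,v)}=v^*F(X)v$, and property (a) records the companion identity \eqref{eq:P:separating_pencil3.1}. I would choose the distinguished unit vector $e\in\mathcal{H}_E$ with components aligned to the marked vectors, i.e. $e_{(X',v')}$ proportional to $v'$, so that the perturbation $ve^*$ combines with $I_{(X,v)}$ to reconstitute the marked operator $I_E$ (up to the chosen weight) in the $(X,v)$ slot out of the projection $I_E-vv^*$ carried by $I_{(X,v)}$; this is precisely what makes property (c), stated for $\gamma=I_E$, applicable. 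Expanding $\langle W,L_F(X)(I_{(X,v)}+ve^*)\rangle_{T_F}$ in the direct sum and invoking (a) and (c) at the marked slot then collapses the expression to $e^*W^*F(X)v$.

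The main obstacle I anticipate is exactly this last localization: one fixed vector $e$ and one fixed bounded functional $T_F$ must simultaneously realize the marked-point conditions of Proposition~\ref{P:separating_pencil} at every slot, and the plain direct-sum inner product on $\mathcal{H}_E$, in which $e$ and the right-hand side $e^*W^*F(X)v$ live, must be reconciled with the $T_F$-weighted inner product of \eqref{eq:P:separating_pencil5}, in which $W$ and $L_F(X)$ live. In particular the weights entering $e$ and those entering $T_F$ must be chosen compatibly so that $T_F$ fixes $e$, so that the contributions of the non-marked slots in the expansion of $ve^*$ are correctly absorbed, and so that $T_F$ remains a genuine element of $\mathcal{B}(\mathcal{H}_E)^*$ despite the size of the index set; carrying out this bookkeeping, rather than the positivity statements, is where the real work lies.
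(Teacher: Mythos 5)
Your construction --- a block-diagonal direct sum of the single-point pencils $L_{F,X',v'}$ over all slots, with fixed weights --- is not the paper's argument, and the difficulty you defer to ``bookkeeping'' is in fact fatal to it. The identity \eqref{eq:C:OperatorModel} is tested against $I_{(X,v)}+ve^*$, and the rank-one term $ve^*$ has a nonzero component in \emph{every} slot where $e$ is supported. Expanding slot by slot, the contribution of an unmarked slot $(X',v')$ would have to satisfy
\begin{equation*}
w_{(X',v')}\Bigl[T(F,X',v')(W^*vv'^*)+\lambda_{X',v'}\bigl((I_{\mathcal A}\otimes W^*)X(I_{\mathcal A}\otimes vv'^*)\bigr)\Bigr]=v'^*W^*F(X)v
\end{equation*}
for all $W$ and for \emph{every} admissible $X$. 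But Proposition~\ref{P:separating_pencil} ties the pencil $L_{F,X',v'}$ to the value of $F$ only at its own marked point $X'$ (via (c), upgraded to $L_{F,X',v'}(F(X'),X')I=0$ by positivity); at all other points it yields only the one-sided inequality (b). No choice of weights can force a single-point pencil to reproduce $F(X)$ at points it was never fitted to, so the non-marked slots cannot be ``correctly absorbed.'' (A secondary obstruction: the index set of slots is generally uncountable, so one cannot assign every slot a strictly positive weight and keep $T_F$ bounded; any slot given weight zero then contributes $0$ on the left but not on the right.)

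The paper resolves this with a genuinely different mechanism that your proposal is missing: a weak-$*$ compactness argument. For each \emph{finite} set of data points $\{(X_1,v_1),\dots,(X_n,v_n)\}$ one applies Proposition~\ref{P:separating_pencil} not slotwise but to the single direct-summed data point $(\oplus_i X_i,\oplus_i v_i)$; because $F$ preserves direct sums, the resulting single pencil interpolates $F$ simultaneously at all $n$ points. Property (a) of Proposition~\ref{P:separating_pencil}, together with $(1+\eta)X$ lying in the interior, gives uniform norm bounds on the admissible $(T,\lambda)$, so the sets of solutions to \eqref{eq:C:OperatorModel1} for each $(X,v)$ are nonempty, weak-$*$ closed and contained in a Banach--Alaoglu compactum; the finite intersection property then yields one pair $(T_F,\lambda_F)$ valid for all $(X,v)$ at once. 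The limiting $(T_F,\lambda_F)$ is in no way block-diagonal over the slots, which is precisely why it can satisfy the globally coupled identity that your direct sum cannot. Your treatment of (a) and of the coercivity clause in (b) is fine in spirit (the constant in \eqref{eq:P:separating_pencil6} depends only on the position of $X$), but the existence of the interpolating $(T_F,\lambda_F)$ is the heart of the corollary and requires the direct-sum-plus-compactness step.
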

\begin{proof}
Rewriting \eqref{eq:C:OperatorModel}, it essentially becomes
\begin{equation}\label{eq:C:OperatorModel1}
T(W^*(I_{(X,v)}+ve^*))+\lambda((I\otimes W^*)X(I\otimes (I_{(X,v)}+ve^*)))=e^*W^*F(X)v
\end{equation}
for $0\leq T\in\mathcal{B}(\mathcal{H}_E)^{*}$, $\lambda\in (\mathcal{A}\otimes \mathcal{B}(\mathcal{H}_E))^{*}$ and an $e\in\mathcal{H}_{E}$. If we equip $\mathcal{B}(\mathcal{H}_E)^{*}$, $(\mathcal{A}\otimes \mathcal{B}(\mathcal{H}_E))^{*}$ with their respective weak-$*$ topologies, 
then the set of all uniformly norm bounded $(T,\lambda)$ that satisfies \eqref{eq:C:OperatorModel1} for all $v,W,X$ and a fixed $e\in\mathcal{H}_{E}$ is closed in the product topology.
The free function $F$ preserves direct sums, so for any finite set of points $\{(X_1,v_1),\ldots,(X_n,v_n)\}$ with $v_i\in E_0$ and $(1+\eta)X_i\in D(E)$ in the interior of $\mathrm{co}^{\mathrm{mat}}(D)(E)$, by applying Proposition~\ref{P:separating_pencil} with $e=\oplus_{i=1}^n v_i$ to the single point data set $(\oplus_{i=1}^n X_i,\oplus_{i=1}^n v_i)$ it follows that the set of all such $(T,\lambda)$ is also nonempty. Moreover by (a) in Proposition~\ref{P:separating_pencil} we can assume that the norms of such $T,\lambda$ are uniformly bounded since $(1+\eta)X$ is also in the interior of $\mathrm{co}^{\mathrm{mat}}(D)(E)$. After a change of basis in $\mathcal{H}_{E}$ we conclude that we can also choose $e\in\mathcal{H}_{E}$ arbitrarily in \eqref{eq:C:OperatorModel1}. These norm bounded closed sets of $(T,\lambda)$ are compact it their respective weak-$*$ topologies by Banach-Alaoglu, thus their product is also compact. Furthermore these closed compact sets of $(T,\lambda)$ form a collection indexed by $(X,v)$ and thus can be partially ordered by inclusion within open sets of $(X,v)$ and then this collection has the finite intersection property. Thus, for a fixed $e\in\mathcal{H}_{E}$, by compactness there exists a $(T_F,\lambda_F)$ for which \eqref{eq:C:OperatorModel1} holds for all $(1+\eta)X\in D(E)$ in the interior of $\mathrm{co}^{\mathrm{mat}}(D)(E)$, $v\in E$ and $W\in\overline{\mathcal{B}(\mathcal{H}_E,E)_{T_F}}$. Then $L_{F}$ is determined by the transpose of $\lambda_F$, and the positivity condition $\left\langle W,L_{F}(X)W\right\rangle_{T_F}\geq \epsilon T_F(W^*W)$ follows from \eqref{eq:P:separating_pencil6}. Thus (a) and (b) are proved. 
\end{proof}

\begin{theorem}[Theorem 3 cf. \cite{anderson}]\label{T:Schur_complement}
Let $Z$ be a positive semi-definite linear operator on a Hilbert space and $S$ a subspace. Let the matrix of $Z$ be partitioned as $Z=\left[ \begin{array}{cc}
Z_{11} & Z_{12} \\
Z_{21} & Z_{22} \end{array} \right]$ with $Z_{11}:S\mapsto S$, $Z_{21}:S\mapsto S^{\perp}$. Then $\ran(Z_{21})\subset \ran(Z_{22})^{1/2}$ and there exists a bounded linear operator $C:S\mapsto S^{\perp}$ such that $Z_{21}=(Z_{22})^{1/2}C$ and
\begin{equation*}
Z=\left[ \begin{array}{cc}
Z_{11}-C^*C & 0 \\
0 & 0 \end{array} \right]+\left[ \begin{array}{cc}
C^* & 0 \\
(Z_{22})^{1/2} & 0 \end{array} \right]
\left[ \begin{array}{cc}
C & (Z_{22})^{1/2} \\
0 & 0 \end{array} \right].
\end{equation*}
The bounded positive semi-definite operator $\mathcal{S}_S(Z)=Z_{11}-C^*C$ is called the shorted operator or Schur complement of $Z$. It satisfies $\mathcal{S}_S(Z)\leq Z$ and it is maximal among all self-adjoint operators $X:S\mapsto S$ such that $X\leq Z$.
\end{theorem}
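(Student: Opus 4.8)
The plan is to reduce the whole statement to a single Cauchy--Schwarz estimate coming from the positivity of $Z$, and then to invoke a Riesz representation (equivalently, Douglas's factorization lemma) to manufacture the operator $C$. First I would expand the quadratic form of $Z$ on a vector $x\oplus y$ with $x\in S$, $y\in S^{\perp}$; using $Z=Z^{*}$ so that $Z_{12}=Z_{21}^{*}$, positivity of $Z$ gives
\begin{equation*}
\langle Z_{11}x,x\rangle+2\Re\langle Z_{21}x,y\rangle+\langle Z_{22}y,y\rangle\geq 0
\end{equation*}
for all such $x,y$. Adjusting the phase of $y$ and rescaling $y\mapsto ty$ turns the left side into a nonnegative quadratic in the real variable $t$, whose discriminant must then be nonpositive; this yields the key bound $|\langle Z_{21}x,y\rangle|^{2}\leq\langle Z_{11}x,x\rangle\,\|Z_{22}^{1/2}y\|^{2}$ for every $x\in S$, $y\in S^{\perp}$.

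This single inequality does most of the work. Fixing $x$, it shows that the conjugate-linear functional $Z_{22}^{1/2}y\mapsto\langle Z_{21}x,y\rangle$ is well defined on $\ran(Z_{22}^{1/2})$ (its null space contains that of $Z_{22}^{1/2}$) and bounded there by $\langle Z_{11}x,x\rangle^{1/2}$. Extending it to $\overline{\ran(Z_{22}^{1/2})}$ and applying the Riesz representation theorem produces a vector, which I call $Cx\in\overline{\ran(Z_{22}^{1/2})}$, with $\langle Z_{21}x,y\rangle=\langle Cx,Z_{22}^{1/2}y\rangle$; hence $Z_{21}x=Z_{22}^{1/2}Cx$, proving simultaneously that $\ran(Z_{21})\subset\ran(Z_{22}^{1/2})$ and that $Z_{21}=Z_{22}^{1/2}C$ with $C$ bounded. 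Because $Cx$ lies in $\overline{\ran(Z_{22}^{1/2})}$, the supremum over $y$ in the key bound collapses to $\|Cx\|^{2}\leq\langle Z_{11}x,x\rangle$, i.e. $C^{*}C\leq Z_{11}$, so that $\mathcal{S}_S(Z)=Z_{11}-C^{*}C\geq 0$.

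The factorization identity is then a routine block multiplication: writing $M=\left[\begin{array}{cc} C & Z_{22}^{1/2}\\ 0 & 0\end{array}\right]$, one checks that $M^{*}M$ has blocks $C^{*}C$, $C^{*}Z_{22}^{1/2}=Z_{12}$, $Z_{22}^{1/2}C=Z_{21}$ and $Z_{22}$, whence $Z=\diag(\mathcal{S}_S(Z),0)+M^{*}M$. Since $M^{*}M\geq 0$, this immediately gives $\mathcal{S}_S(Z)\leq Z$, reading $\mathcal{S}_S(Z)$ as acting on $S$ and extended by $0$ on $S^{\perp}$. For maximality, suppose a self-adjoint $X:S\mapsto S$ satisfies $X\leq Z$ in the same sense; then $Z-\diag(X,0)\geq 0$ is again a positive block operator with the \emph{same} off-diagonal $Z_{21}$, so the key Cauchy--Schwarz bound applies verbatim and yields $\|Cx\|^{2}\leq\langle(Z_{11}-X)x,x\rangle$, i.e. $X\leq Z_{11}-C^{*}C=\mathcal{S}_S(Z)$, as required.

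The step I expect to be the main obstacle is the passage from the scalar inequality to the bounded operator $C$ in the infinite-dimensional setting: unlike the finite-dimensional case one cannot invert $Z_{22}$, so the range inclusion $\ran(Z_{21})\subset\ran(Z_{22}^{1/2})$ and the boundedness of $C$ must be produced at once, which is exactly what the Riesz argument delivers. The one point to treat with care is arranging $\ran(C)\subseteq\overline{\ran(Z_{22}^{1/2})}$, since it is this normalization that makes the supremum over $y$ equal $\|Cx\|$ and thereby upgrades the estimate to the sharp inequality $C^{*}C\leq Z_{11}$ used in both the positivity and the maximality of the Schur complement.
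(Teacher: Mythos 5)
Your proof is correct; note that the paper itself gives no proof of this statement, quoting it directly from Anderson--Trapp (``Theorem 3 cf.\ \cite{anderson}''), and your argument is essentially the classical one for the shorted operator: the Cauchy--Schwarz bound $|\langle Z_{21}x,y\rangle|^{2}\leq\langle Z_{11}x,x\rangle\,\|Z_{22}^{1/2}y\|^{2}$ from positivity, followed by Riesz representation (Douglas factorization) to produce $C$ with $\ran(C)\subseteq\overline{\ran(Z_{22}^{1/2})}$, which is exactly the normalization needed to get $C^{*}C\leq Z_{11}$ and, applied to $Z-\operatorname{diag}(X,0)\geq0$, the maximality. All steps, including the linearity and boundedness of $C$ and the block factorization, check out.
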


\begin{theorem}\label{T:reconstruct_schur}
Let $(D(E))\ni 0$ and $F$ be as in Proposition~\ref{P:partialConcaveHypoGr} with $F|_D>0$. Fix a Hilbert space $E$. Assume that $\mathrm{co}^{\mathrm{mat}}(D)(E)$ has nonempty interior for $E$. Then for each $X\in D(E)$ in the interior of $\mathrm{co}^{\mathrm{mat}}(D)(E)$ we have
\begin{equation}\label{eq:T:reconstruct_schur:0}
F(X)=(e\otimes I_E)\mathcal{S}_{e^*\otimes E}(L_{F}(X))(e^*\otimes I_E)
\end{equation}
where $L_{F}$ and $e$ are as in Corollary~\ref{C:OperatorModel} for an arbitrary, but sufficiently small fixed $\eta>0$. Moreover the right hand side of \eqref{eq:T:reconstruct_schur:0} is well defined for each interior point $X\in\mathrm{co}^{\mathrm{mat}}(D)(E)$.
\end{theorem}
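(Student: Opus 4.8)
The plan is to read \eqref{eq:T:reconstruct_schur:0} as the assertion that the shorted operator of the positive pencil $L_F(X)$ onto the distinguished subspace $S:=e^*\otimes E=\{ve^*:v\in E\}\subseteq\mathcal{K}$, where $\mathcal{K}:=\overline{\mathcal{B}(\mathcal{H}_E,E)_{T_F}}$, coincides with $F(X)$ after the identification $E\cong S$; here $E\cong S$ is implemented by the embedding $\iota=e^*\otimes I_E\colon E\to S$, $v\mapsto ve^*$, and by $e\otimes I_E$ on the other side. First I would record that for $X\in D(E)$ interior (and $\eta$ small enough that $(1+\eta)X\in D(E)$ is still interior), Corollary~\ref{C:OperatorModel}(b) furnishes both the reproducing identity $\langle W,L_F(X)(I_{(X,v)}+ve^*)\rangle_{T_F}=e^*W^*F(X)v$ for all $W\in\mathcal{K}$ and the coercivity $\langle W,L_F(X)W\rangle_{T_F}\geq\epsilon\,T_F(W^*W)$. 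Coercivity makes $L_F(X)$ a bounded, self-adjoint, boundedly invertible positive operator, so its $S^\perp$-block is invertible and the Schur complement $\mathcal{S}_{S}(L_F(X))$ of Theorem~\ref{T:Schur_complement} is well defined, with the extremal description
\[
\langle s,\mathcal{S}_{S}(L_F(X))s\rangle_{T_F}=\min_{\eta\in\mathcal{K},\,P_S\eta=s}\langle\eta,L_F(X)\eta\rangle_{T_F},
\]
the minimiser being the unique extension $\eta$ of $s$ with $L_F(X)\eta\in S$.

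The heart of the argument is to feed the reproducing identity into this extremal description with the \emph{designed} vector $\xi_{X,v}:=I_{(X,v)}+ve^*$. Taking $W=ue^*\in S$ in (b) and using $\|e\|=1$ gives $\langle ue^*,L_F(X)\xi_{X,v}\rangle_{T_F}=u^*F(X)v$, which identifies the $S$-component $P_S\,L_F(X)\xi_{X,v}$ with $(F(X)v)e^*$; taking $W=\xi_{X,v}$ gives $\langle\xi_{X,v},L_F(X)\xi_{X,v}\rangle_{T_F}=v^*F(X)v$. Hence if $\xi_{X,v}$ is the genuine minimiser—that is $P_S\xi_{X,v}=ve^*$ and $L_F(X)\xi_{X,v}\in S$—then $\mathcal{S}_{S}(L_F(X))$ carries $ve^*$ to $(F(X)v)e^*$, and compressing by $(e\otimes I_E)(\cdot)(e^*\otimes I_E)$ returns exactly $F(X)$. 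I would also package this two-sidedly through the maximality clause of Theorem~\ref{T:Schur_complement}: Cauchy--Schwarz for the positive form $L_F(X)$ against $\xi_{X,v}$, combined with $\langle\xi_{X,v},L_F(X)\xi_{X,v}\rangle_{T_F}=v^*F(X)v$, yields $\iota F(X)\iota^{-1}\oplus 0\leq L_F(X)$ (so $\mathcal{S}_S\geq F(X)$ after transport), while the equality case of Cauchy--Schwarz at $\eta=\xi_{X,v}$ gives the reverse bound, coercivity ensuring the compression is attained.

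The main obstacle is precisely the optimality of $\xi_{X,v}$: the two orthogonality facts $L_F(X)\xi_{X,v}\in S$ (equivalently $\langle W,L_F(X)\xi_{X,v}\rangle_{T_F}=0$ for $W\in S^\perp$) and $P_S\xi_{X,v}=ve^*$ (equivalently $I_{(X,v)}\perp_{T_F}S$), together with the normalisation $T_F(ee^*)=1$ that makes $e^*\otimes I_E$ isometric onto $S$, so that the outer compression is the honest inverse of the embedding. All three are compatibility conditions between the distinguished unit vector $e$ and the functional $T_F$: they say that the ``evaluation at $e$'' appearing on the right of (b) agrees with the $T_F$-inner product on $S$. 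I expect to verify them by unwinding the construction of $(T_F,\lambda_F,e)$ in Proposition~\ref{P:separating_pencil} and Corollary~\ref{C:OperatorModel}—in particular the change of basis normalising $e$ and the placement of $I_{(X,v)}=I_E-vv^*$ in the $(X,v)$-slot of $\mathcal{H}_E$—rather than from abstract properties of the shorted operator. Finally, for the last assertion I would note that $L_F$ is an affine pencil defined on all of $\mathcal{A}\otimes\mathcal{B}(E)$ which by (a) is positive on $\mathrm{co}^{\mathrm{mat}}(D)(E)$ and by (b) coercive on its interior; hence $\mathcal{S}_{S}(L_F(X))$ and the right-hand side of \eqref{eq:T:reconstruct_schur:0} are defined and continuous for every interior point $X$ of $\mathrm{co}^{\mathrm{mat}}(D)(E)$, independently of whether $X\in D(E)$.
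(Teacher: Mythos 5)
Your overall route is the paper's: coercivity from Corollary~\ref{C:OperatorModel}(b) makes the $S^\perp$-block of $L_F(X)$ boundedly invertible, and then the identity \eqref{eq:C:OperatorModel}, tested against the designed vector $I_{(X,v)}+ve^*$, is solved by block Gaussian elimination to yield the Schur complement formula. But there are two genuine gaps. First, what you yourself call ``the main obstacle'' --- the orthogonality and normalisation facts $P_S(I_{(X,v)}+ve^*)=ve^*$, $L_F(X)(I_{(X,v)}+ve^*)\in S$, and $T_F(ee^*)=1$ making $e^*\otimes I_E$ isometric onto $S$ --- is exactly the content of the computation, and you leave it at ``I expect to verify them by unwinding the construction.'' These are not formal consequences of the shorted-operator calculus; they are statements about how the functional $W\mapsto e^*W^*F(X)v$ on the right of \eqref{eq:C:OperatorModel} is represented inside $\overline{\mathcal{B}(\mathcal{H}_E,E)_{T_F}}$, and a proof must actually carry out that verification (or, as the paper does, perform the elimination directly on \eqref{eq:C:OperatorModel}, using that the identity holds for all $W$ and hence determines $L_F(X)(I_{(X,v)}+ve^*)$ completely). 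As written, the crux is asserted rather than proved.

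Second, your quantifiers on $\eta$ are reversed. In Corollary~\ref{C:OperatorModel} the parameter $\eta>0$ is fixed before the compactness argument, and $e$, $T_F$, $\Lambda_F$ all depend on it; identity \eqref{eq:C:OperatorModel} is then guaranteed only for those $X$ with $(1+\eta)X\in D(E)$ in the interior of $\mathrm{co}^{\mathrm{mat}}(D)(E)$. The theorem asserts \eqref{eq:T:reconstruct_schur:0} for every interior $X\in D(E)$ with that one fixed pencil, and for fixed $\eta$ there will in general be interior points $X\in D(E)$ with $(1+\eta)X\notin D(E)$. Your parenthetical ``$\eta$ small enough that $(1+\eta)X\in D(E)$ is still interior'' chooses $\eta$ after $X$, which would change the pencil with the point. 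The paper closes this gap by noting that both sides agree on the open set where $(1+\eta)X$ lies in $D(E)$ and in the interior, that the right-hand side is analytic on the whole interior of $\mathrm{co}^{\mathrm{mat}}(D)(E)$ by the coercivity estimate \eqref{eq:P:separating_pencil6}, and invoking uniqueness of analytic continuation to reach $\eta=0$; some such continuation step is needed and is absent from your write-up. Your argument for the final claim, that the right-hand side of \eqref{eq:T:reconstruct_schur:0} is well defined at every interior point, is fine.
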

\begin{proof}
By (b) in Corollary~\ref{C:OperatorModel} the self-adjoint completely bounded linear map $L_{F}(X)=T_F\otimes I_E+\Lambda_{F}(X)$ is strictly positive definite for all $X$ in the interior of $\mathrm{co}^{\mathrm{mat}}(D)(E)$ and is completely absolutely continuous with respect to $0\leq T_F\in\mathcal{B}(\mathcal{H}_E)^{*}$. Thus by Theorem~\ref{T:Schur_complement} its Schur complement pivoting on the subspace $e^*\otimes E$ of $\overline{\mathcal{B}(\mathcal{H}_E,E)_{T_F}}$ exists. By the strict positivity of $L_{F}(X)$, the $Z_{22}$ block of $L_{F}(X)$ in Theorem~\ref{T:Schur_complement} has closed range and is invertible. So, in \eqref{eq:C:OperatorModel} block Gaussian elimination applies and thus
\begin{equation*}
F(X)v=(e\otimes I_E)\mathcal{S}_{e^*\otimes E}(L_{F}(X))(e^*\otimes v)
\end{equation*}
for each $v\in E$ and $(1+\eta)X\in D(E)$ in the interior of $\mathrm{co}^{\mathrm{mat}}(D)(E)$. Taylor's power series formula ensures the uniqueness of analytic continuations to the whole interior of $\mathrm{co}^{\mathrm{mat}}(D)(E)$ thus the assertion holds for $\eta=0$ as well. This implies \eqref{eq:T:reconstruct_schur:0}.
\end{proof}

The converse of the above also holds:
\begin{theorem}\label{T:SchurConcave}
Let $\mathcal{H}$ a Hilbert space and $e\in\mathcal{H}$ with $\|e\|=1$ be fixed. Let a completely bounded affine map $L:\mathcal{A}\otimes\mathcal{B}(E)\mapsto \mathcal{B}(\mathcal{H})^*\otimes\mathcal{B}(E)$ be given as
\begin{equation*}
L(X):=T\otimes I_E+\Lambda(X),
\end{equation*}
where $0\leq T\in\mathcal{B}(\mathcal{H})^{*}$ and $\Lambda:\mathcal{A}\mapsto \mathcal{B}(\mathcal{H})^*$ is a self-adjoint completely bounded linear map that is completely absolutely continuous with respect to $T$. Then the function
\begin{equation}\label{eq:T:SchurConcave:0}
F(X)=(e\otimes I_E)\mathcal{S}_{e^*\otimes E}(L(X))(e^*\otimes I_E)
\end{equation}
is well defined and analytic for each $X\in\{Y\in\mathcal{A}\otimes\mathcal{B}(E):L(\Re(Y))>0\}$ and satisfies the assumptions of Proposition~\ref{P:partialConcaveHypoGr} with $D(E):=\{Y\in\mathcal{A}\otimes\mathcal{B}(E):L(Y)\geq 0\}$, in particular \eqref{eq:P:partialConcaveHypoGr} holds.
\end{theorem}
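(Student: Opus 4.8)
The plan is to realize $L(X)$ as a genuine bounded self-adjoint operator on a Hilbert space and then to extract every asserted property from the maximality clause of the shorted operator in Theorem~\ref{T:Schur_complement}. First I would, exactly as in the paragraph preceding Corollary~\ref{C:OperatorModel}, form $\overline{\mathcal{B}(\mathcal{H},E)_T}$, the completion of $\mathcal{B}(\mathcal{H},E)$ for the semi-inner product $\langle W,V\rangle_T:=T(W^*V)$, and view
\[
\langle W,L(X)V\rangle_T:=T(W^*V)+\lambda((I_\mathcal{A}\otimes W^*)X(I_\mathcal{A}\otimes V))
\]
as a densely defined symmetric form, where $\lambda\in(\mathcal{A}\otimes\mathcal{B}(\mathcal{H}))^*$ is the functional whose transpose is $\Lambda$. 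The complete absolute continuity of $\Lambda$ with respect to $T$ is precisely what makes this form bounded, so $L(X)$ extends to a bounded self-adjoint operator. On $D(E)=\{Y:L(Y)\geq0\}$ the operator $L(X)$ is positive semi-definite, so its shorted operator onto $S:=e^*\otimes E=\{ve^*:v\in E\}$ exists by Theorem~\ref{T:Schur_complement}, making $F(X)=(e\otimes I_E)\mathcal{S}_{S}(L(X))(e^*\otimes I_E)$ a well-defined positive element of $\mathcal{B}(E)$. For analyticity on $\{Y:L(\Re Y)>0\}$ I would observe that $\Re L(Y)=L(\Re Y)>0$ (using that $T$ and $\Lambda$ are self-adjoint, so $L(Y)^*=L(Y^*)$), that a strictly positive real part is inherited by the compression of $L(Y)$ to $S^\perp$, whence the $Z_{22}$ block is invertible, and conclude via the algebraic formula $\mathcal{S}_S(Z)=Z_{11}-Z_{12}Z_{22}^{-1}Z_{21}$ that $F$ is analytic there, the two descriptions agreeing on the overlap by uniqueness of analytic continuation.

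Next I would verify the hypotheses of Proposition~\ref{P:partialConcaveHypoGr}. Since $T\otimes I_E$ is self-adjoint and $\Lambda$ is self-adjoint, $L(Y)^*=L(Y^*)$, so $L(Y)\geq0$ forces $L(Y^*)=L(Y)^*=L(Y)\geq0$; thus each $D(E)$ is a self-adjoint set. Because $\Lambda$ acts only on the $\mathcal{A}$-factor it preserves direct sums in the $\mathcal{B}$-factor, whence $L(A\oplus B)=L(A)\oplus L(B)$ is block diagonal on $\overline{\mathcal{B}(\mathcal{H},E)_T}\oplus\overline{\mathcal{B}(\mathcal{H},E')_T}$; positivity of the two blocks gives closure of $(D(E))$ under direct sums, and the shorted operator of a block-diagonal operator onto the correspondingly split subspace $(e^*\otimes E)\oplus(e^*\otimes E')$ is the direct sum of the two shorted operators, so $F(A\oplus B)=F(A)\oplus F(B)$.

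The heart of the argument is the covariance identity
\[
L((I_\mathcal{A}\otimes W^*)X(I_\mathcal{A}\otimes W))=\iota_W^*\,L(X)\,\iota_W,
\]
where for an isometry $W:E\mapsto K$ the map $\iota_W:\overline{\mathcal{B}(\mathcal{H},E)_T}\mapsto\overline{\mathcal{B}(\mathcal{H},K)_T}$, $V\mapsto WV$, is itself an isometry since $T((WV)^*WV)=T(V^*V)$; the identity follows by expanding the defining form and factoring $(I_\mathcal{A}\otimes(WW')^*)=(I_\mathcal{A}\otimes W'^*)(I_\mathcal{A}\otimes W^*)$. In particular $L(W^*XW)\geq0$ automatically, so $W^*XW\in D(E)$. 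The crucial point is the containment
\[
\iota_W\big((e^*\otimes E)^\perp\big)\subseteq(e^*\otimes K)^\perp,
\]
which I would check by the computation $\langle we^*,W\eta\rangle_T=T(e\,w^*W\eta)=T(e\,(W^*w)^*\eta)=0$ for $\eta\perp e^*\otimes E$ and $w\in K$.

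Granting the containment, I would prove \eqref{eq:P:partialConcaveHypoGr} from maximality. Writing $Y_2:=\mathcal{S}_{e^*\otimes K}(L(X))$ and setting $\tilde Y_1:=\iota_W^*(Y_2\oplus0)\iota_W$, the containment shows $\tilde Y_1$ annihilates $(e^*\otimes E)^\perp$, hence is self-adjoint and supported on $e^*\otimes E$, while $Y_2\oplus0\leq L(X)$ together with congruence by $\iota_W$ gives $\tilde Y_1\leq L(W^*XW)$. The maximality clause of Theorem~\ref{T:Schur_complement} then yields $\tilde Y_1\leq\mathcal{S}_{e^*\otimes E}(L(W^*XW))$, and compressing this operator inequality by $e^*\otimes I_E$ reads off exactly $F(W^*XW)\geq W^*F(X)W$, which is \eqref{eq:P:partialConcaveHypoGr}; since $0\in D(\mathbb{C})$ and $F(0)\geq0$ the contraction version follows as well. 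I expect the main obstacle to be the functional-analytic bookkeeping of the completed spaces $\overline{\mathcal{B}(\mathcal{H},E)_T}$ together with the orthogonal-complement containment, which is what converts the abstract maximality of the shorted operator into the desired operator-Jensen inequality.
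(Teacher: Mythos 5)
Your proof is correct, and for the key inequality it takes a genuinely different route from the paper. The paper's own argument is very short: since $L$ is affine and the shorted operator is, by the maximality clause of Theorem~\ref{T:Schur_complement}, the supremum of self-adjoint operators supported on $e^*\otimes E$ dominated by $L(X)$, the function $F$ is operator concave on the matrix convex set $D(E)$; the inequality \eqref{eq:P:partialConcaveHypoGr} is then imported wholesale from Proposition~\ref{P:ConcaveIsom}, whose forward direction rests on the $2\times 2$ unitary-dilation trick. You instead prove \eqref{eq:P:partialConcaveHypoGr} directly from maximality, via the covariance identity $L(W^*XW)=\iota_W^*L(X)\iota_W$ for the isometry $\iota_W:V\mapsto WV$ of the completed spaces $\overline{\mathcal{B}(\mathcal{H},E)_T}$, together with the containment $\iota_W\bigl((e^*\otimes E)^\perp\bigr)\subseteq(e^*\otimes K)^\perp$; both computations check out (the latter because $\langle we^*,W\eta\rangle_T=\langle (W^*w)e^*,\eta\rangle_T=0$), and pushing $\mathcal{S}_{e^*\otimes K}(L(X))\oplus 0$ through $\iota_W^*\,\cdot\,\iota_W$ produces a self-adjoint operator supported on $e^*\otimes E$ and dominated by $L(W^*XW)$, so maximality yields exactly $W^*F(X)W\leq F(W^*XW)$. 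What your route buys is self-containment -- it never invokes convexity of $D(E)$ or Proposition~\ref{P:ConcaveIsom}, so it would survive on domains that are not assumed matrix convex, which is closer to the spirit of \eqref{eq:P:partialConcaveHypoGr} as a partial Jensen inequality; what the paper's route buys is brevity, since concavity of a shorted affine pencil is a one-line consequence of maximality and the isometric inequality is already established abstractly. Your treatment of well-definedness, analyticity on $\{Y:L(\Re Y)>0\}$ via invertibility of the $Z_{22}$ block, self-adjointness of $D(E)$, and direct-sum preservation all match the paper's (tersely stated) intentions.
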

\begin{proof}
By the positivity assumption $L(\Re(X))>0$ the Schur complement in \eqref{eq:T:SchurConcave:0} is well defined and analytic as a free function in the sense of all various kinds of analyticities in \cite{verbovetskyi}. By Theorem~\ref{T:Schur_complement} the Schur complement of the positive affine map $L(X)$ is the maximal in the positive definite order on the subspace $e\otimes E$ among those which are dominated by $L(X)$ on the subspace $e\otimes E$ over its matrix convex domain $(D(E))$. From this maximality property concavity of $F(X)$ readily follows. Thus by Proposition~\ref{P:ConcaveIsom} $F(X)$ satisfies \eqref{eq:P:partialConcaveHypoGr}.
\end{proof}

Now as a combination of Theorem~\ref{T:reconstruct_schur} and Theorem~\ref{T:convex_hull_saturated} we obtain the following result establishing the analytic lifts for globally monotone functions on $\mathbb{P}(\mathbb{C})^k$. Then from this, further considerations prove the same for any other rectangular domain in $\mathbb{R}^k$. We use the notations $\Pi(E):=\{X\in\mathcal{B}(E):\Im(X)>0\}$ and $\overline{\Pi}(E):=\{X\in\mathcal{B}(E):\Im(X)\geq 0\}$, also $\Pi(E)^*:=\{X\in\mathcal{B}(E):\Im(X)<0\}$ and $\overline{\Pi}(E)^*:=\{X\in\mathcal{B}(E):\Im(X)\leq 0\}$.

\begin{theorem}\label{T:Loewner_several_var2}
Let $f:\mathbb{P}(\mathbb{C})^k\mapsto\mathbb{P}(\mathbb{C})$ be a real function. Then the following are equivalent:
\begin{itemize}
\item[(a)]$f$ is globally operator monotone;
\item[(b)]$f$ has a free analytic extension $f:\mathbb{P}(E)^k\mapsto \mathbb{P}(E)$ that is operator monotone;
\item[(c)]There exists a Hilbert space $\mathcal{K}$, a vector $e\in\mathcal{K}$, $0\leq B_i\in\mathcal{B}(\mathcal{K})$, $0\leq i\leq k$ with $B_0\geq \sum_{i=1}^kB_i$ such that for all $X\in\mathbb{CP}(E)^k$ we have
\begin{equation}\label{eq:T:Loewner_several_var2:0.1}
f(X)=(e\otimes I_E)\mathcal{S}_{e^*\otimes E}(L_{f}(X))(e^*\otimes I_E)
\end{equation}
where
\begin{equation}\label{eq:T:Loewner_several_var2:0.2}
L_{f}(X):=B_0\otimes I_E+\sum_{i=1}^kB_i\otimes (X_i-I_E),
\end{equation}
\item[(d)]$f$ has a free analytic continuation to $\Pi(E)^k$ and to $(\Pi(E)^*)^k$ across $\mathbb{P}(E)^k$, mapping $\Pi(E)^k$ to $\overline{\Pi}(E)$ and $(\Pi(E)^*)^k$ to $\overline{\Pi}(E)^*$.
\end{itemize}
\end{theorem}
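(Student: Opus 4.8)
The plan is to establish the cycle (a)$\Rightarrow$(c)$\Rightarrow$(b)$\Rightarrow$(a) and then to attach (d) by proving (c)$\Rightarrow$(d) and (d)$\Rightarrow$(a), so that all four statements land in a single equivalence class. Throughout I regard $\mathbb{P}(\mathbb{C})^k$ as a self-adjoint subset of $\mathcal{A}\otimes\mathcal{B}(E)$ with $\mathcal{A}=\mathbb{R}^k$, so that a point $X$ is a $k$-tuple and any linear map on $\mathcal{A}$ is determined by its values on the $k$ coordinate directions. Since $0\notin\mathbb{P}(\mathbb{C})^k$, the first move is to recenter at the unit: I work with the partial free function $\tilde F(Z):=f(Z+(I_E,\dots,I_E))$ on the translate $D(E)$ of $\mathbb{CP}(E)^k$ by $-(I_E,\dots,I_E)$, which contains $0$, is positive, preserves direct sums, and by Proposition~\ref{P:monotone_concave} satisfies the isometric Jensen inequality \eqref{eq:P:partialConcaveHypoGr}.

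For (a)$\Rightarrow$(c), I apply Corollary~\ref{C:OperatorModel} and Theorem~\ref{T:reconstruct_schur} to $\tilde F$ over $E=\mathbb{C}$, producing a Hilbert space $\mathcal{K}$ (the completion underlying the shorted-operator construction), a unit vector $e$, and an affine pencil $L_{\tilde F}(Z)=T_F\otimes I+\Lambda_F(Z)$ whose Schur complement reconstructs $\tilde F$. Because $\Lambda_F$ is linear on $\mathcal{A}=\mathbb{R}^k$, it splits as $\Lambda_F(Z)=\sum_{i=1}^k B_i\otimes Z_i$ with $B_i\in\mathcal{B}(\mathcal{K})$; setting $B_0:=T_F$ and undoing the translation yields $L_f(X)=B_0\otimes I_E+\sum_{i=1}^k B_i\otimes(X_i-I_E)$, exactly \eqref{eq:T:Loewner_several_var2:0.2}. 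The positivity constraints are forced by the domain geometry: by Lemma~\ref{L:CoMatHullP} the matrix convex hull of the domain is the full positive orthant, so $L_f(X)\geq 0$ for every $X\in\mathbb{P}(E)^k$ by Corollary~\ref{C:OperatorModel}(a); sending a single coordinate $X_i=tI_E\to\infty$ and dividing by $t$ forces $B_i\geq 0$, while letting $X\to 0$ along the closure forces $(B_0-\sum_i B_i)\otimes I_E\geq 0$, i.e.\ $B_0\geq\sum_i B_i$. Finally \eqref{eq:T:reconstruct_schur:0} gives \eqref{eq:T:Loewner_several_var2:0.1} for scalar tuples, and since the right-hand side is a free function by Theorem~\ref{T:SchurConcave}, unitary and direct-sum invariance propagate the identity to every commuting $X\in\mathbb{CP}(E)^k$ with the same data $(\mathcal{K},e,B_i)$.

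For (c)$\Rightarrow$(b), Theorem~\ref{T:SchurConcave} shows $F(X)=(e\otimes I_E)\mathcal{S}_{e^*\otimes E}(L_f(X))(e^*\otimes I_E)$ is a well-defined analytic concave free function on $\mathbb{P}(E)^k$ with positive values; monotonicity is immediate from the \emph{affine} structure, for if $X\leq X'$ then $L_f(X')-L_f(X)=\sum_i B_i\otimes(X_i'-X_i)\geq 0$ since each $B_i\geq 0$, and the shorted operator is order preserving by its maximality in Theorem~\ref{T:Schur_complement}, whence $F(X)\leq F(X')$. For (b)$\Rightarrow$(a) I restrict the operator monotone extension to commuting tuples: unitary invariance and direct-sum preservation force $F(U^*\Lambda U)=U^*f(\Lambda)U$ on every diagonalizable $X=U^*\Lambda U\in\mathbb{CP}(E)^k$, so $F$ agrees there with the spectral-calculus definition of $f$ and inherits its monotonicity, which is precisely global operator monotonicity.

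For (c)$\Rightarrow$(d), replacing each $X_i$ by an operator with $\Im X_i>0$ gives $\Im L_f(X)=\sum_i B_i\otimes\Im X_i\geq 0$ because $B_i\geq 0$, so $L_f(X)$ becomes a Nevanlinna pencil whose shorted operator again has nonnegative imaginary part; hence $F$ continues analytically to $\Pi(E)^k$ with values in $\overline{\Pi}(E)$, and conjugating gives the lower half-plane statement, while analyticity across $\mathbb{P}(E)^k$ is already supplied by Theorem~\ref{T:SchurConcave}. Conversely, a function with such a Pick-type continuation is operator monotone on commuting tuples by the classical multivariable Loewner theorem \cite{agler}, giving (a). The main obstacle is the extraction step in (a)$\Rightarrow$(c): one must verify that the abstract model coefficients $\Lambda_F(e_i)$ are genuinely \emph{operator}-positive rather than merely positive on the scalar cone, and that the single scalar model transfers verbatim to all $E$; the half-plane analysis in (c)$\Rightarrow$(d) likewise hinges on the shorted-operator calculus remaining valid when strict positivity degenerates to $\Im L_f(X)\geq 0$.
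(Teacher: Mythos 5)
Your proposal is correct and follows essentially the same route as the paper: recenter so the domain contains $0$, extract the pencil from Corollary~\ref{C:OperatorModel} and Theorem~\ref{T:reconstruct_schur} with $B_i\geq 0$ and $B_0\geq\sum_i B_i$ forced by $L_f\geq 0$ on large tuples and on $tI$ as $t\to 0^+$, recover monotonicity from the order-preservation/maximality of the Schur complement, and get (d) from positivity of $\Im L_f(X)$. The only deviations are cosmetic reroutings of the implication graph (you close the loop via (b)$\Rightarrow$(a) and (d)$\Rightarrow$(a) with \cite{agler}, where the paper uses (b)$\Rightarrow$(c) and (d)$\Rightarrow$(b) via \cite{palfia1}), which does not change the substance.
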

\begin{proof}
First we prove that (a) implies (c). By Theorem~\ref{T:reconstruct_schur} the representation formula follows for the translated function $g(x):=f(x+1)$ with domain $(-1,\infty)^k$ whose matrix convex hull is $D(E):=\{X\in\mathcal{B}(E):X=X^*,X_i\geq -I_E\}^k$ which contains an open neighborhood of $0$ for the operator system $\mathcal{A}=\mathbb{C}^k$. Since the domain contains arbitrarily large positive operators and by (a) of Corollary~\ref{C:OperatorModel} we have $L_{f}(X)\geq 0$, it follows that $\Lambda_{f}(X)$ is completely positive, thus of the the form as in \eqref{eq:T:Loewner_several_var2:0.2} with $B_i\geq 0$ and $B_0\geq \sum_{i=1}^kB_i$ as well, since $L_{f}(t I)\geq 0$ for all $t>0$. In a similar way we show that (b) implies (c).

Next we claim that (c) implies (a). Indeed, $L_{f}(X)$ is order preserving and the maximality characterization of the Schur complement in Theorem~\ref{T:Schur_complement} ensures that the right hand side of \eqref{eq:T:Loewner_several_var2:0.1} is an operator monotone function. In a similar way we also prove that (c) implies (b).

That (c) implies (d) essentially follows from $B_i\geq 0$ so that the Schur complement in \eqref{eq:T:Loewner_several_var2:0.1}, if it exists, has strictly positive imaginary part if $\Re(X_i)>0$, see Proposition 5.1. \cite{palfia1}. Now the strict positivity of $\Im(L_{f}(X))$ for arbitrary $\Im(X)\in\Pi(E)^k$ and $E$ implies the strict lower boundedness, thus the existence of the inverse operator in the formula of the Schur complement, thus the Schur complement itself as an analytic function on $\Pi^k$.

Lastly, that (d) implies (b) can be found in the main theorem of \cite{palfia1}.
\end{proof}

At this point, much like as in \cite{palfia1}, one can use M\"obius transformations to transform the domain $\mathbb{P}(\mathbb{C})^k$ into any open rectangle in $\mathbb{R}^k$ to prove that global monotonicity implies free analytic continuation for the function to the whole matrix convex hull of its domain, thus arriving at its operator monotone non-commutative lift.

\begin{remark}
In the above corollaries we assumed $\mathcal{A}=\mathbb{C}^k$, however one can work out similar results in the same way for any operator system $\mathcal{A}$.
\end{remark}

\section{Representation of operator means of probability measures}



Let $\mathcal{P}(\mathbb{P}(E))$ denote the set of fully supported Borel probability measures on the complete metric space $(\mathbb{P}(E),d_\infty)$ where $E$ is a Hilbert space and $d_\infty(A,B)=\|\log(A^{-1/2}BA^{-1/2})\|$ denotes the Thompson metric \cite{hiailim,palfia2}. Let $\mathcal{P}^\infty(\mathbb{P}(E))\subset\mathcal{P}(\mathbb{P}(E))$ denote the subset of probability measures with bounded support. For a $\mu\in\mathcal{P}(\mathbb{P}(E))$ the support $\supp(\mu)$ is a separable closed subset of $\mathbb{P}(E)$ and it has full measure $\mu(\supp(\mu))=1$. Also note that the relative operator norm topology on 
$\mathbb{P}(E)$ agrees with the metric topology of $d_\infty$, for this and further references see \cite{palfia2}.

\begin{proposition}\label{P:embedP}
The collection of sets $(\mathcal{P}^\infty(\mathbb{P}(E)))$ indexed by $E$ is a self-adjoint matrix convex set. In particular $\mathcal{P}^\infty(\mathbb{P}(E))$ embeds into $L^\infty([0,1],\lambda)_+\otimes \mathbb{P}(E)$, the strictly positive cone of the projective tensor product $L^\infty([0,1],\lambda)\otimes \mathcal{B}(E)$.
\end{proposition}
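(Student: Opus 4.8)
The plan is to pass from probability measures to $\mathbb{P}(E)$-valued random variables on the fixed probability space $([0,1],\lambda)$, and then to verify the closure conditions of Definition~\ref{D:matrix_covexity} pointwise in the parameter $t\in[0,1]$. Concretely, I take $\mathcal{A}=L^\infty([0,1],\lambda)$ and identify, up to the inessential choice of cross-norm already flagged at the start of this section, the tensor product $L^\infty([0,1],\lambda)\otimes\mathcal{B}(E)$ with a space of essentially bounded, weak-$*$ measurable $\mathcal{B}(E)$-valued functions on $[0,1]$; its strictly positive cone $L^\infty([0,1],\lambda)_+\otimes\mathbb{P}(E)$ then consists of those $X$ with $\epsilon I_E\le X(t)\le M I_E$ for a.e. $t$ and some $0<\epsilon\le M<\infty$, i.e. of the bounded $\mathbb{P}(E)$-valued random variables. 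The distribution map $X\mapsto X_*\lambda$ sends such an $X$ to a Borel probability measure on $\mathbb{P}(E)$ with bounded support, hence into $\mathcal{P}^\infty(\mathbb{P}(E))$.

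The first genuine step is the embedding, i.e. surjectivity of the distribution map: given $\mu\in\mathcal{P}^\infty(\mathbb{P}(E))$ I must produce an $X\in L^\infty([0,1],\lambda)_+\otimes\mathbb{P}(E)$ with $X_*\lambda=\mu$. Here I use that $\supp(\mu)$ is, as recorded above, a separable closed, hence Polish, subset of $(\mathbb{P}(E),d_\infty)$, and that $([0,1],\lambda)$ is a nonatomic standard probability space; the isomorphism theorem for standard probability spaces, equivalently a Borel parametrization of $\mu$ by $[0,1]$, then yields a measurable $X:[0,1]\mapsto\supp(\mu)$ with $X_*\lambda=\mu$. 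Boundedness of $\supp(\mu)$ in the Thompson metric forces $\epsilon I_E\le X(t)\le M I_E$ a.e., so $X$ is a bounded $\mathbb{P}(E)$-valued random variable; its essentially separable range makes it strongly measurable, placing it in the cone $L^\infty([0,1],\lambda)_+\otimes\mathbb{P}(E)$. This realizes $\mathcal{P}^\infty(\mathbb{P}(E))$ as a subset of $\mathcal{A}\otimes\mathcal{B}(E)$.

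It remains to verify the matrix convex axioms, and here everything is pointwise. Self-adjointness is immediate since each $X(t)\in\mathbb{P}(E)$ is self-adjoint. For a unitary $U:K\mapsto E$ the map $A\mapsto U^*AU$ carries $\mathbb{P}(E)$ into $\mathbb{P}(K)$ preserving the bounds $\epsilon,M$, so $t\mapsto U^*X(t)U$ again lies in $L^\infty([0,1],\lambda)_+\otimes\mathbb{P}(K)$ and its law is the pushforward of $\mu$, still in $\mathcal{P}^\infty(\mathbb{P}(K))$; the same argument works verbatim for an isometry $V:K\mapsto E$, using $V^*X(t)V\ge\epsilon V^*V=\epsilon I_K$ to retain the uniform lower bound and invertibility. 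For direct sums, if $X,Y$ lie in the cones over $E$ and $K$ then $t\mapsto X(t)\oplus Y(t)$ takes values in $\mathbb{P}(E\oplus K)$ with the same uniform bounds, hence lies in the cone over $E\oplus K$, and its law, a coupling of $\mu$ and $\nu$ concentrated on block-diagonal operators, again has bounded support and so belongs to $\mathcal{P}^\infty(\mathbb{P}(E\oplus K))$. These are precisely the closure properties required by Definition~\ref{D:matrix_covexity} for a self-adjoint matrix convex set, namely unitary conjugation, direct sums, and compression by isometries.

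I expect the measurable realization in the second paragraph to be the only real obstacle: when $\dim E=\infty$ the ambient space $\mathbb{P}(E)$ is non-separable, so one cannot directly invoke a global parametrization and must instead exploit that each individual $\mu$ is concentrated on the separable set $\supp(\mu)$. A secondary, more bookkeeping point is the identification of the weak-$*$ measurable bounded functions with the projective tensor product $L^\infty([0,1],\lambda)\otimes\mathcal{B}(E)$; this is exactly the sort of cross-norm issue the paper has already declared inessential, and I would simply record that the $W^*$-tensor product $L^\infty([0,1],\lambda;\mathcal{B}(E))$ is the natural home of the random variables and suffices for all subsequent arguments.
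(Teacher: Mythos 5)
Your argument is essentially the paper's own proof: the paper also obtains the random-variable realization via the Skorokhod representation theorem applied to the Polish space $(\supp(\mu),d_\infty)$, identifies $L^\infty([0,1],\lambda,\mathcal{B}(E))$ with the tensor product (via the $C(X)\otimes\mathcal{A}\simeq C(X,\mathcal{A})$ argument and St\o rmer's description of the positive cone, where you instead defer to the cross-norm remark), and then deduces matrix convexity of $(\mathcal{P}^\infty(\mathbb{P}(E)))$ from the pointwise closure of the cone $L^\infty([0,1],\lambda)^+\otimes\mathbb{P}(E)$ under direct sums and isometric compressions together with the pushforward map. No gaps; your pointwise verification of the axioms is just a more explicit rendering of the paper's closing paragraph.
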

\begin{proof}
Let $\mu\in\mathcal{P}^\infty(\mathbb{P}(E))$. Then $\supp(\mu)$ is separable and closed, thus $\mu$ is concentrated on the complete Polish space $(\supp(\mu),d_\infty)$. Thus by the Skorokhod representation Theorem 8.5.4. in \cite{bogachev}, there exists a Borel map $\xi_\mu:[0,1]\mapsto \supp(\mu)\subseteq\mathbb{P}(E)$ such that $\mu=(\xi_\mu)_*\lambda$. Since $\supp(\mu)$ is a bounded subset of $\mathbb{P}(E)$, we have that $\esssup \|\xi_\mu\|<\infty$. Thus $\xi_\mu\in L^\infty([0,1],\lambda,\mathbb{P}(E))\subseteq L^\infty([0,1],\lambda,\mathcal{B}(E))$ where 
\begin{equation*}
\begin{split}
L^\infty([0,1],\lambda,\mathcal{B}(E))=\{&f:[0,1]\mapsto\mathcal{B}(E), f \text{ is strongly measurable},\\
&\esssup(f)<\infty\}.
\end{split}
\end{equation*}
Next, we claim that $L^\infty([0,1],\lambda,\mathcal{B}(E))\simeq L^\infty([0,1],\lambda)\otimes \mathcal{B}(E)$ as von Neumann algebras, where the latter is the projective tensor product. This follows from the same argument leading to Proposition 12.5. in \cite{paulsen} showing that $C(X)\otimes \mathcal{A}\simeq C(X,\mathcal{A})$ for any $C^*$-cross norm with the isomorphism $\phi:\sum_{i=1}^nf_i\otimes a_i\longrightarrow \sum_{i=1}^nf_i(t)a_i$ for $f_i\in C(X)$ and $a_i\in\mathcal{A}$, where $\mathcal{A}$ is a $C^*$-algebra and $X$ is a compact Hausdorff space. Now it is straightforward to see that the Borel map constructed above $\xi_\mu$ is a strictly positive element of the positive cone of $L^\infty([0,1],\lambda)\otimes \mathcal{B}(E)$ which is $L^\infty([0,1],\lambda)^+\otimes \mathbb{P}(E)$ by Lemma 2.2. in \cite{stormer}, where $L^\infty([0,1],\lambda)^+=\{f\geq 0:f\in L^\infty([0,1],\lambda)\}$. 

Now the positive cone $(L^\infty([0,1],\lambda)^+\otimes \mathbb{P}(E))$ is closed under direct sums and isometric conjugations, thus $(L^\infty([0,1],\lambda)^+\otimes \mathbb{P}(E))$ is an (open) matrix convex set. Moreover for any $\xi\in L^\infty([0,1],\lambda)^+\otimes \mathbb{P}(E)$ the pushforward $(\xi)_*\lambda\in\mathcal{P}^\infty(\mathbb{P}(E))$, so $(\mathcal{P}^\infty(\mathbb{P}(E)))$ is matrix convex as well.
\end{proof}

\begin{remark}\label{R:EquivalentCrossNorms}
Notice that $L^\infty([0,1],\lambda)$ is an injective von Neumann algebra, or in other words $L^\infty([0,1],\lambda)$ is nuclear as a $C^*$-algebra. Nuclearity ensures that all $C^*$-cross norms on $(L^\infty([0,1],\lambda)^+\otimes \mathbb{P}(E))$ are equivalent. So in particular the projective and injective $C^*$-cross norms are the same. For more details see for example Chapter IV in \cite{takesaki}.
\end{remark}

\begin{remark}\label{R:ForgetsProduct}
In Proposition~\ref{P:embedP} the embedding of $(\mathcal{P}^\infty(\mathbb{P}(E)))$ into the cone $L^\infty([0,1],\lambda)_+\otimes \mathbb{P}(E)$ does not appear to be injective. Elements of $L^\infty([0,1],\lambda)_+\otimes \mathbb{P}(E)$ can be classified into equivalence classes by almost sure identification with elements of $(\mathcal{P}^\infty(\mathbb{P}(E)))$.
\end{remark}

A set $U\subseteq \mathbb{P}(E)$ is \emph{upper} if $X\leq Y\in \mathbb{P}(E)$ and $X\in U$ imply that $Y\in U$.
\begin{definition}[Stochastic order, cf. \cite{hiailawsonlim}]
For $\mu,\nu\in\mathcal{P}(\mathbb{P}(E))$ the stochastic partial order $\mu\leq\nu$ is defined by requiring $\mu(U)\leq\nu(U)$ for all closed upper sets $U\subseteq\mathbb{P}(E)$.
\end{definition}

The following result is essentially due to Strassen for Polish spaces with a closed partial order. It can be found in a suitable form as Theorem 1 in \cite{kamaekrengelobrien}.

\begin{theorem}\label{T:StrassenMonotone}
Let $\mu,\nu\in\mathcal{P}^\infty(\mathbb{P}(E))$. Then the following are equivalent:
\begin{itemize}
\item[(i)]$\mu\leq\nu$;
\item[(ii)]there exists $\xi_\mu:[0,1]\mapsto \supp(\mu)$ and $\xi_\nu:[0,1]\mapsto \supp(\nu)$ such that $\mu=(\xi_\mu)_*\lambda$ and $\nu=(\xi_\nu)_*\lambda$ with $\xi_\mu(t)\leq\xi_\nu(t)$ almost surely for all $t\in[0,1]$.
\end{itemize}
\end{theorem}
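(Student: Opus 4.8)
The plan is to treat the two implications separately, with (ii)$\Rightarrow$(i) being elementary and (i)$\Rightarrow$(ii) reducing to the Kamae--Krengel--O'Brien form of Strassen's theorem together with a Skorokhod representation of the resulting coupling.

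For (ii)$\Rightarrow$(i), I would suppose $\xi_\mu,\xi_\nu$ are given with $\mu=(\xi_\mu)_*\lambda$, $\nu=(\xi_\nu)_*\lambda$ and $\xi_\mu(t)\leq\xi_\nu(t)$ for almost every $t\in[0,1]$. Fixing a closed upper set $U\subseteq\mathbb{P}(E)$, the upper property forces $\xi_\nu(t)\in U$ whenever $\xi_\mu(t)\in U$ and $\xi_\mu(t)\leq\xi_\nu(t)$, so that $\xi_\mu^{-1}(U)\subseteq\xi_\nu^{-1}(U)$ up to a $\lambda$-null set. Taking Lebesgue measure gives $\mu(U)=\lambda(\xi_\mu^{-1}(U))\leq\lambda(\xi_\nu^{-1}(U))=\nu(U)$, which is exactly $\mu\leq\nu$.

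For the substantive direction (i)$\Rightarrow$(ii), I would first observe that although $\mathbb{P}(E)$ need not be separable when $\dim(E)=\infty$, the set $S:=\overline{\supp(\mu)\cup\supp(\nu)}$ is a separable closed, hence Polish, subspace of $(\mathbb{P}(E),d_\infty)$ carrying both $\mu$ and $\nu$, and the Loewner order is closed in $S\times S$ since the positive cone is norm-closed. Thus $(S,\leq)$ is a Polish space with a closed partial order, and I would apply Theorem~1 of \cite{kamaekrengelobrien}: the stochastic inequality $\mu\leq\nu$ yields a coupling $\pi\in\mathcal{P}(S\times S)$ with marginals $\mu$ and $\nu$ concentrated on the order relation $\{(X,Y):X\leq Y\}$. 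I would then invoke the Skorokhod representation theorem (Theorem~8.5.4 in \cite{bogachev}), exactly as in the proof of Proposition~\ref{P:embedP}, applied to the Polish space $\supp(\pi)\subseteq S\times S$, to obtain a Borel map $\zeta:[0,1]\mapsto\supp(\pi)$ with $\pi=\zeta_*\lambda$. Writing $\zeta=(\xi_\mu,\xi_\nu)$ for its two coordinate projections, the marginal identities $\mu=(\xi_\mu)_*\lambda$ and $\nu=(\xi_\nu)_*\lambda$ follow from those of $\pi$, while concentration of $\pi$ on $\{X\leq Y\}$ gives $\xi_\mu(t)\leq\xi_\nu(t)$ for $\lambda$-almost every $t$, as required.

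The main obstacle is the reduction to a genuinely Polish setting: one must ensure that the hypotheses of the Kamae--Krengel--O'Brien theorem hold despite the possible non-separability of $\mathbb{P}(E)$, which is handled by passing to the closed separable hull $S$ and checking that $\leq$ is closed there. The only additional point needing care is that the closed upper sets defining the stochastic order on $\mathbb{P}(E)$ restrict to closed upper sets of $S$, so that the order relation to which the cited theorem is applied is the correct one; once this is verified, the construction of the coupling and its Skorokhod parametrization are routine.
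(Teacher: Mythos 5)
The paper does not actually prove this statement: it is presented as ``essentially due to Strassen'' and attributed to Theorem~1 of \cite{kamaekrengelobrien}, so your proposal is precisely an unwinding of the citation the author intends. Your direction (ii)$\Rightarrow$(i) is correct, and the Skorokhod parametrization of the coupling produced by the Kamae--Krengel--O'Brien theorem is the right way to land on $[0,1]$ with Lebesgue measure.

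There is, however, a real gap in the reduction to the Polish subspace $S=\overline{\supp(\mu)\cup\supp(\nu)}$, and you flag the relevant verification in the wrong direction. That closed upper sets of $\mathbb{P}(E)$ trace to closed upper sets of $S$ is immediate, but it is not what is needed: the hypothesis of the Strassen/Kamae--Krengel--O'Brien theorem on $S$ quantifies over \emph{all} closed upper subsets of $S$ (equivalently, it requires $\mu(A)\leq\nu(\uparrow\! A\cap S)$ for every closed $A\subseteq S$, where $\uparrow\! A:=\{y:\exists x\in A,\ x\leq y\}$ is analytic, hence universally measurable), and these need not be of the form $U\cap S$ with $U$ closed and upper in $\mathbb{P}(E)$. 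One can check that the norm closure of $\uparrow\! A$ is again upper, because the order is induced by the translation-invariant closed cone of positive operators, which yields $\mu(A)\leq\nu\bigl(\overline{\uparrow\! A}\bigr)$; but $\nu\bigl(\overline{\uparrow\! A}\bigr)$ may exceed $\nu(\uparrow\! A\cap S)$, so this alone does not deliver the marginal condition on $S$. Closing this loop is exactly the non-separability issue analyzed in \cite{hiailawsonlim} (normality of the cone and the equivalence of the various formulations of the stochastic order), and your proof should either invoke that analysis explicitly or verify the marginal condition $\mu(A)\leq\nu(\uparrow\! A\cap S)$ directly rather than through trace sets.
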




One might wonder for $\mu\in\mathcal{P}^\infty(\mathbb{P}(E))$, $\nu\in\mathcal{P}^\infty(\mathbb{P}(K))$ what is the correct way to define $\mu\oplus\nu$? In \cite{hiailim} the authors define it as the pushforward $(g)_*(\mu\times\nu)$ of the direct sum $g(A,B):=A\oplus B$, and they show that their operator means preserve this direct sum. However in free function theory if we have $n$-tuples of operators $X:=(X_1,\ldots,X_n)\in\mathcal{B}(E)^n$ and $Y:=(Y_1,\ldots,Y_n)\in\mathcal{B}(K)^n$, their direct sum is element-wise, that is $X\oplus Y=(X_1\oplus Y_1,\ldots,X_n\oplus Y_n)$. Free functions, including as well all operator means \cite{palfia1}, preserve this direct sum. Notice that if we regard $X,Y$ as discrete probability measures, that is $X=\sum_{i=1}^n\frac{1}{n}\delta_{X_i}$, $Y=\sum_{i=1}^n\frac{1}{n}\delta_{Y_i}$, then both definitions of $X\oplus Y$ are measures in $\mathcal{P}(\mathcal{B}(E\oplus K))$ with marginals $\mu$ and $\nu$, where the $\sigma$-algebra is induced by the norm. Also operator means are permutation invariant, that is the ordering of coordinates in $(X_1,\ldots,X_n)$ does not matter \cite{hiailim,palfia1}. This and Remark~\ref{R:ForgetsProduct} seem to suggest that we should allow some non-uniqueness when considering direct sums of measures. This leads to the following.

\begin{definition}[Direct sums of probability measures]\label{D:directsumProbMeas}
For $\mu\in\mathcal{P}^\infty(\mathbb{P}(E))$, $\nu\in\mathcal{P}^\infty(\mathbb{P}(K))$, let $\Gamma(\mu,\nu)\subseteq\mathcal{P}^\infty(\mathbb{P}(E\oplus K))$ denote the set of couplings of $\mu,\nu$, that is $\gamma\in\Gamma(\mu,\nu)$ if $\gamma(A\times \mathbb{P}(K))=\mu(A)$ and $\gamma(\mathbb{P}(E)\times B)=\nu(B)$, in other words, elements of $\Gamma(\mu,\nu)$ have marginals $\mu,\nu$. Then $\mu\oplus\nu$ is defined to be the set $\Gamma(\mu,\nu)$. Thus in general, the direct sum of probability measures is no longer uniquely determined.
\end{definition}

Notice that $\Gamma(\mu,\nu)$ is nonempty, since the product measure $\mu\times\nu\in\Gamma(\mu,\nu)$. Also for any $\gamma\in\Gamma(\mu,\nu)$ we have that $\supp(\gamma)\subseteq \supp(\mu)\times\supp(\nu)$. We may regard operator means of finitely supported measures as a sequence of functions satisfying the following.

\begin{definition}[Operator mean of discrete probability measures]
For each $0<n\in\mathbb{N}$ and Hilbert space $E$ let $F_n:\mathbb{P}(E)^n\mapsto\mathbb{P}(E)$ be an operator monotone free function. Then we say that the sequence of functions $F_n$ is an operator mean if it satisfies the following
\begin{itemize}
\item[1)]For a permutation $\sigma\in S_n$, $F_n(X_1,\ldots,X_n)=F_n(X_{\sigma(1)},\ldots,X_{\sigma(n)})$;
\item[2)]For $0<k\in\mathbb{N}$, $F_{nk}(\underbrace{X_1,\ldots,X_1}_{\text{$k$ times}},\ldots,\underbrace{X_n,\ldots,X_n}_{\text{$k$ times}})=F_n(X_1,\ldots,X_n)$.
\end{itemize}
In order to simplify notation, the subscript $n$ will often be omitted, simply writing $F=F_n$ for an operator mean.
\end{definition}

Notice that given an operator mean $F=F_n$, it is automatically defined for discrete probability measures with rational weights by grouping together the repeated variables and applying 1), 2). It is known that operator concavity for a free function $F:\mathbb{P}(E)^n\mapsto\mathbb{P}(E)$ is equivalent to operator monotonicity which in turn is equivalent to \eqref{eq:P:partialConcaveHypoGr} by an argument similar to the one in Proposition~\ref{P:monotone_concave}, for more details see \cite{palfia1}.

\begin{proposition}\label{P:opmeanDirectsum}
An operator mean $F_n:\mathbb{P}(E)^n\mapsto\mathbb{P}(E)$ preserves direct sums of discrete probability measures with rational weights in the sense of Definition~\ref{D:directsumProbMeas}.
\end{proposition}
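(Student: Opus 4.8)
The plan is to reduce everything to the element-wise direct sum invariance of the free functions $F_n$ recorded in Definition~\ref{D:freeFunction}, using the symmetry axiom 1) and the refinement axiom 2) of an operator mean to pass freely between the measure-theoretic and the tuple description. Fix discrete measures with rational weights $\mu=\sum_{i} p_i\delta_{X_i}\in\mathcal{P}^\infty(\mathbb{P}(E))$ and $\nu=\sum_{j} q_j\delta_{Y_j}\in\mathcal{P}^\infty(\mathbb{P}(K))$. Since $\supp(\mu)$ and $\supp(\nu)$ are finite, every coupling $\gamma\in\Gamma(\mu,\nu)$ is again discrete and supported on the block-diagonal points $X_i\oplus Y_j$, so we may write $\gamma=\sum_{i,j}r_{ij}\delta_{X_i\oplus Y_j}$ with rational weights obeying the marginal constraints $\sum_j r_{ij}=p_i$ and $\sum_i r_{ij}=q_j$. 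The goal is to show $F(\gamma)=F(\mu)\oplus F(\nu)$ for every such $\gamma$, which is precisely the assertion that $F$ preserves the direct sum in the sense of Definition~\ref{D:directsumProbMeas}.

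First I would place all three measures over a common denominator. Choose $N$ so that $s_{ij}:=Nr_{ij}\in\mathbb{Z}_{\geq 0}$ for all $i,j$; then $\sum_j s_{ij}=Np_i$, $\sum_i s_{ij}=Nq_j$ and $\sum_{i,j}s_{ij}=N$. Thus $N$ is simultaneously an admissible denominator for $\mu$, $\nu$ and $\gamma$, and by axiom 2) the value of an operator mean on a discrete rational measure is independent of the admissible denominator chosen, so all three may be evaluated at level $N$.

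The key step is to fix a single ordering of the $N$ slots consistent with all three evaluations at once. Enumerate the index pairs $(i,j)$, listing each pair exactly $s_{ij}$ times in some fixed order, and write the resulting length-$N$ sequence as $(i(\ell),j(\ell))_{\ell=1}^{N}$. Set $\tilde X_\ell:=X_{i(\ell)}$ and $\tilde Y_\ell:=Y_{j(\ell)}$. By construction $X_i$ occurs in $(\tilde X_\ell)_\ell$ with multiplicity $\sum_j s_{ij}=Np_i$, the operator $Y_j$ occurs in $(\tilde Y_\ell)_\ell$ with multiplicity $Nq_j$, and the block-diagonal operator $X_i\oplus Y_j=\tilde X_\ell\oplus\tilde Y_\ell$ occurs in the element-wise sum $(\tilde X_\ell\oplus\tilde Y_\ell)_\ell$ with multiplicity $s_{ij}$. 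This is exactly where the marginal constraints enter: listing the joint pairs automatically yields the correct marginal multiplicities, so one and the same permutation serves the $X$-tuple, the $Y$-tuple and the $\gamma$-tuple.

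Finally I would invoke permutation invariance (axiom 1)) to identify $F_N(\tilde X_1,\dots,\tilde X_N)=F(\mu)$, $F_N(\tilde Y_1,\dots,\tilde Y_N)=F(\nu)$ and $F_N(\tilde X_1\oplus\tilde Y_1,\dots,\tilde X_N\oplus\tilde Y_N)=F(\gamma)$, and then apply the element-wise direct sum invariance $F_N(\tilde X\oplus\tilde Y)=F_N(\tilde X)\oplus F_N(\tilde Y)$ of the free function $F_N$ to conclude $F(\gamma)=F(\mu)\oplus F(\nu)$. The only genuinely delicate points are bookkeeping ones: checking that a single admissible denominator and a single ordering can be made to serve all three measures, and that axiom 2) really renders $F$ well defined independently of the denominator. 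The analytic content is carried entirely by the direct sum invariance already available for operator means, so I expect the combinatorial consistency of the ordering to be the main thing to get right.
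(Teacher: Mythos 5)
Your argument is correct and is essentially the paper's own proof written out in full detail: the paper likewise reduces everything to tuples via the permutation axiom 1) and the refinement axiom 2) (after normalizing to uniform weights) and then invokes the element-wise direct sum invariance of the free function, exactly as you do with your common denominator $N$ and the single ordering $(i(\ell),j(\ell))_{\ell=1}^{N}$. One small imprecision to fix: a coupling of two rational-weight discrete measures need not itself have rational weights (the marginal constraints $\sum_j r_{ij}=p_i$, $\sum_i r_{ij}=q_j$ leave free parameters that may be irrational), so your assertion that every $\gamma\in\Gamma(\mu,\nu)$ has rational $r_{ij}$ is false as stated; but since $F$ is at this stage only defined on rational-weight discrete measures, the proposition can only concern rational-weight couplings, and for those your argument goes through verbatim.
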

\begin{proof}
Without loss of generality, let $\mu=\sum_{i=1}^n\frac{1}{n}\delta_{X_i}$, $\nu=\sum_{i=1}^k\frac{1}{k}\delta_{Y_i}$ be given. Then any $\mu\oplus\nu$ is supported on the set $\supp(\mu)\times\supp(\nu)=\{X_i\oplus Y_j:i\in\{1,\ldots,n\},j\in\{1,\ldots,k\}\}$. Then using the direct sum invariance of $F$ and grouping elements by 2), we obtain that $F(\mu\oplus\nu)=F(\mu)\oplus F(\nu)$.
\end{proof}

In order to study operator means of general probability measures, instead of considering the restrictive set of functions $F:\mathcal{P}^\infty(\mathbb{P}(E))\mapsto\mathbb{P}(E)$ we consider first free functions of random variables, that is $F:(L^1([0,1],\lambda)^+\otimes \mathbb{P}(E))\mapsto\mathbb{P}(E)$. Let $S([0,1],\lambda)$ denote the set of simple functions on $[0,1]$. Then $S([0,1],\lambda)$ is norm-dense in $L^p([0,1],\lambda)$ for $1\leq p\leq +\infty$ and the same is true for $S([0,1],\lambda)^+\otimes \mathbb{P}(E)$ in $L^p([0,1],\lambda)^+\otimes \mathbb{P}(E)$.

\begin{theorem}\label{T:OpMeanRandomVar}
Assume that $F:S([0,1],\lambda)^+\otimes \mathbb{P}(E)\mapsto\mathbb{P}(E)$ is free function that satisfies \eqref{eq:P:partialConcaveHypoGr}. Then for each $1\leq p\leq +\infty$ there exists a unique $\hat{F}_p:L^p([0,1],\lambda)^+\otimes \mathbb{P}(E)\mapsto\mathbb{P}(E)$ extending $F$.
\end{theorem}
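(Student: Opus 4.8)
The plan is to upgrade the order-theoretic content of the hypothesis into a metric Lipschitz bound and then extend by continuity and by monotone limits, treating $p=\infty$ and $1\le p<\infty$ slightly differently. Throughout I write $d_\infty$ for the Thompson metric on $\mathbb{P}(E)$ and, for elements $X,Y$ of the ambient cone, $D_\infty(X,Y):=\esssup_{t\in[0,1]}d_\infty(X(t),Y(t))$; recall that $(\mathbb{P}(E),d_\infty)$ is complete. First I would record the structural consequences of \eqref{eq:P:partialConcaveHypoGr}. By Proposition~\ref{P:partialConcaveHypoGr} together with the equivalence (noted in the text) between \eqref{eq:P:partialConcaveHypoGr}, operator concavity and operator monotonicity for maps into $\mathbb{P}(E)$, the function $F$ is simultaneously operator monotone and operator concave on $S([0,1],\lambda)^+\otimes\mathbb{P}(E)$. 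Since $F>0$ and $F$ is monotone, the net $F(\epsilon X)$ decreases as $\epsilon\downarrow 0$ and is bounded below by $0$, so it admits a strong limit $F(0^+)\ge 0$; passing concavity to this limit and writing $\lambda X=\lambda X+(1-\lambda)0$ and $X=\tfrac1c(cX)+(1-\tfrac1c)0$ yields the sub/super-homogeneity estimates
\[
F(\lambda X)\ge \lambda F(X)\ \ (\lambda\in[0,1]),\qquad F(cX)\le c\,F(X)\ \ (c\ge 1).
\]

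Next I would combine these with monotonicity to prove that $F$ is nonexpansive for $D_\infty$. Indeed $D_\infty(X,Y)\le r$ is exactly the pointwise sandwich $e^{-r}X\le Y\le e^{r}X$, so by monotonicity and the homogeneity bounds
\[
e^{-r}F(X)\le F(e^{-r}X)\le F(Y)\le F(e^{r}X)\le e^{r}F(X),
\]
whence $d_\infty(F(X),F(Y))\le r$. Thus $F$ is $1$-Lipschitz from $(S([0,1],\lambda)^+\otimes\mathbb{P}(E),D_\infty)$ into $(\mathbb{P}(E),d_\infty)$. For $p=\infty$ this already closes the argument: every $X\in L^\infty([0,1],\lambda)^+\otimes\mathbb{P}(E)$ satisfies $\delta I\le X\le M I$ almost surely for some $0<\delta\le M$, a region on which $d_\infty$ and the operator norm are equivalent; hence the given norm-density of the simple functions furnishes a $D_\infty$-Cauchy approximating sequence, the Lipschitz bound renders its image $d_\infty$-Cauchy, and completeness of $(\mathbb{P}(E),d_\infty)$ defines $\hat F_\infty(X)$, with monotonicity guaranteeing independence of the chosen sequence.

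For $1\le p<\infty$ the simple approximants need no longer be uniformly close to $X$, and $X$ may even be unbounded, so I would reduce to the bounded case by truncation. For $0<\delta\le M$ let $X_{\delta,M}$ be the pointwise spectral truncation of $X$ to $[\delta I,M I]$, which lies in $L^\infty([0,1],\lambda)^+\otimes\mathbb{P}(E)$ and increases to $X$ as $\delta\downarrow 0$, $M\uparrow\infty$; by monotonicity $\hat F_\infty(X_{\delta,M})$ is monotone in $(\delta,M)$, and I would define $\hat F_p(X)$ as its strong limit. The crucial point is that this limit is finite, i.e. an element of $\mathbb{P}(E)$ rather than an unbounded positive operator: concavity together with $F(0^+)\ge 0$ produces a supergradient majorant $F(X)\le F(I)+\Phi(X-I)$ with $\Phi$ linear and norm-continuous, and since $[0,1]$ carries a probability measure, $\Phi(X)$ is controlled by $\int_0^1 X\,d\lambda$, which is finite for $X\in L^p$ with $p\ge 1$. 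Monotonicity again forces the limit to be independent of the truncation scheme, so $\hat F_p$ is well defined, restricts to $F$ on simple functions, and is the unique monotone extension; the remaining free-function properties pass to $\hat F_p$ by continuity and monotone limits, and \eqref{eq:P:partialConcaveHypoGr} is preserved since it is tested on direct sums of such approximants.

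The step I expect to be the main obstacle is precisely the finiteness and well-definedness in the range $p<\infty$. Unlike the $L^\infty$ case there is no uniform Thompson control of the approximants, so one cannot argue by a single Lipschitz estimate; instead one must genuinely exploit the concavity of $F$ to manufacture the sublinear majorization that both keeps $\hat F_p(X)\in\mathbb{P}(E)$ for unbounded $X$ and, via monotonicity, pins the value down independently of the approximating route. Reconciling the norm-density of $S([0,1],\lambda)^+\otimes\mathbb{P}(E)$ in $L^p([0,1],\lambda)^+\otimes\mathbb{P}(E)$ with the order-limit definition — checking that the strong limit of the truncated values depends only on $X$ — is where the monotonicity and concavity of $F$ must be used most carefully.
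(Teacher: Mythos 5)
Your two regimes fare very differently, so let me separate them. For $p=\infty$ your route is correct and genuinely different from the paper's. You extract operator monotonicity and the homogeneity bounds $F(\lambda X)\geq\lambda F(X)$ for $\lambda\in(0,1]$ and $F(cX)\leq cF(X)$ for $c\geq 1$ from \eqref{eq:P:partialConcaveHypoGr} (in fact no boundary limit $F(0^+)$ is needed: writing $\lambda X=\lambda'X+(1-\lambda')\epsilon X$ with $\lambda'<\lambda$ and using $F>0$ already gives $F(\lambda X)\geq\lambda'F(X)$, then let $\lambda'\uparrow\lambda$), conclude Thompson-metric nonexpansiveness, and extend by density and completeness of $(\mathbb{P}(E),d_\infty)$. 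The paper instead regards $F$ as the sequence $F_n:\mathbb{P}(E)^n\mapsto\mathbb{P}(E)$, feeds it into Corollary~\ref{C:OperatorModel} to obtain the linear-pencil/Schur-complement model, reads off norm continuity from Theorem~\ref{T:SchurConcave}, and extends the resulting relatively norm-continuous map by density of $S([0,1],\lambda)^+\otimes\mathbb{P}(E)$. Your argument is more elementary and self-contained, needing only the order-theoretic consequences of the hypothesis rather than the representation machinery; the paper's buys a single continuity statement that is meant to cover all $p$ at once.

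For $1\leq p<\infty$ there are genuine gaps. First, the truncations $X_{\delta,M}$ are increasing in $M$ but \emph{decreasing} in $\delta$, so they do not "increase to $X$"; the two limits must be taken separately and the claimed order-theoretic uniqueness of the resulting value is no longer automatic. Second, the assertion that the supergradient satisfies $\Phi(X)\lesssim\int_0^1X\,d\lambda$ is precisely the point at issue: a positive linear functional on $L^\infty([0,1],\lambda)\otimes\mathcal{B}(E)$ need not be absolutely continuous with respect to $\lambda$, let alone have density in $L^{p'}$, and without that nothing controls $\hat{F}_p$ on unbounded $X$; this is exactly the absolute-continuity input ($\Lambda_F\ll T_F$) that the paper's model-based proof invokes. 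Third, and most seriously, your affine majorant only prevents blow-up from above and does nothing to prevent collapse as $\delta\downarrow 0$. Already in the scalar case the lift of the geometric mean, $F(\xi)=\exp\bigl(\int_0^1\log\xi\,d\lambda\bigr)$, satisfies all hypotheses, yet for $\xi_n=e^{-n^2}1_{[0,1/n)}+1_{[1/n,1]}$ one has $\|\xi_n-1\|_{L^1}\to 0$ while $F(\xi_n)=e^{-n}\to 0\neq F(1)$, and the truncation limit at $\xi(t)=e^{-1/t}$ is $0\notin\mathbb{P}(\mathbb{C})$. So for $p<\infty$ your construction neither is guaranteed to land in $\mathbb{P}(E)$ nor yields the $L^p$-norm-continuous extension whose uniqueness the statement is about; closing this regime requires the representation-theoretic control from Corollary~\ref{C:OperatorModel} (or a restriction of the class of $F$), not just monotonicity and concavity.
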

\begin{proof}
In essence $F$ can be regarded as a sequence of free functions indexed by $0<n\in\mathbb{N}$ for each Hilbert space $E$, that is $F_n:\mathbb{P}(E)^n\mapsto\mathbb{P}(E)$ satisfying the assumptions of Proposition~\ref{P:partialConcaveHypoGr}. Then we can apply Corollary~\ref{C:OperatorModel} so we have \eqref{eq:C:OperatorModel} for each $F_n$. Each $F_n$ is thus norm-continuous by Theorem~\ref{T:SchurConcave}, so $F:S([0,1],\lambda)^+\otimes \mathbb{P}(E)\mapsto\mathbb{P}(E)$ is relative norm-continuous with respect to the norm topology of $L^p([0,1],\lambda)^+\otimes \mathbb{P}(E)$. Then, it admits a unique norm-continuous extension $\hat{F}_p:L^p([0,1],\lambda)^+\otimes \mathbb{P}(E)\mapsto\mathbb{P}(E)$, since $S([0,1],\lambda)^+\otimes \mathbb{P}(E)$ is norm dense in $L^p([0,1],\lambda)^+\otimes \mathbb{P}(E)$.
\end{proof}

\begin{proposition}\label{P:ConcaveMonotone}
Let $F:(S([0,1],\lambda)^+\otimes \mathbb{P}(E))\mapsto\mathbb{P}(E)$ be an operator monotone free function. Then $F$ satisfies \eqref{eq:P:partialConcaveHypoGr}.
\end{proposition}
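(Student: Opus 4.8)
The plan is to transfer the dilation argument of Proposition~\ref{P:monotone_concave} to the present operator-valued tensor setting, using that the conjugation in \eqref{eq:P:partialConcaveHypoGr} acts only on the Hilbert space factor and hence preserves the simple-function structure of $S([0,1],\lambda)^+\otimes\mathbb{P}(E)$. Fix an isometry $W:E\mapsto K$ and $X\in S([0,1],\lambda)^+\otimes\mathbb{P}(K)$. Since $X$ is a simple function with finitely many strictly positive values it obeys $X\geq\delta I$ for some $\delta>0$, whence $(I_{\mathcal{A}}\otimes W^*)X(I_{\mathcal{A}}\otimes W)\geq\delta I$ lies in the domain, so that the admissibility hypothesis of Proposition~\ref{P:partialConcaveHypoGr} holds automatically. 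I would then form the unitary dilation
\[
U:=\left[ \begin{array}{cc} W & (I-WW^*)^{1/2} \\ 0 & -W^* \end{array} \right]
\]
on $E\oplus K$, choose an arbitrary $A\in S([0,1],\lambda)^+\otimes\mathbb{P}(E)$, and observe that since every operator in sight is norm bounded, for each $\epsilon>0$ one can choose a sufficiently large scalar $z>0$ so that the associated off-diagonal correction block is positive semidefinite by a Schur-complement estimate and $z I\geq C$, where $C$ is the lower-right block of $(I_{\mathcal{A}}\otimes U^*)(X\oplus A)(I_{\mathcal{A}}\otimes U)$; exactly as in Proposition~\ref{P:monotone_concave} this yields the domination
\[
(I_{\mathcal{A}}\otimes U^*)\left[ \begin{array}{cc} X & 0 \\ 0 & A \end{array} \right](I_{\mathcal{A}}\otimes U)\leq\left[ \begin{array}{cc} (I_{\mathcal{A}}\otimes W^*)X(I_{\mathcal{A}}\otimes W)+\epsilon I & 0 \\ 0 & 2zI \end{array} \right].
\]

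Applying the operator monotonicity of $F$ to this inequality, and then rewriting both sides by the unitary invariance and direct sum invariance of the free function $F$, I would compare the upper-left corners of the two resulting block operators to extract
\[
W^*F(X)W\leq F\big((I_{\mathcal{A}}\otimes W^*)X(I_{\mathcal{A}}\otimes W)+\epsilon I\big),
\]
the operator analogue of \eqref{eq:P:monotone_concave}. It remains to remove the $\epsilon$. Mirroring Proposition~\ref{P:monotone_concave}, I would introduce $F^+(X):=\inf_{\epsilon>0}F(X+\epsilon I)=\lim_{\epsilon\to 0+}F(X+\epsilon I)$, which exists as a decreasing net of positive operators bounded below by $F(X)$, and specialize the last display to the column isometry $W=\left[ \begin{array}{c} (1-\lambda)^{1/2}I \\ \lambda^{1/2}I \end{array} \right]$ with $\lambda\in[0,1]$ and to $X=a\oplus b$; the same manipulation as in Proposition~\ref{P:monotone_concave} then shows that $F^+$ is operator concave.

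To pass to the limit I would restrict $F^+$ to the line $\epsilon\mapsto X+\epsilon I$: pairing with any vector $v$ gives a real concave function of $\epsilon$ on an open interval, hence continuous there, so both $\lim_{\epsilon\to 0+}v^*F^+(X+\epsilon I)v=v^*F^+(X)v$ and $\lim_{\epsilon\to 0+}v^*F^+(X-\epsilon I)v=v^*F^+(X)v$ hold. Combined with the monotone sandwich $F^+(X-\epsilon I)\leq F(X)\leq F^+(X)$ this forces $F=F^+$, so $\epsilon\mapsto v^*F(X+\epsilon I)v$ is right-continuous at $0$ for every $v$. Pairing the extracted inequality with $v$ and letting $\epsilon\to 0+$ then gives $v^*W^*F(X)Wv\leq v^*F((I_{\mathcal{A}}\otimes W^*)X(I_{\mathcal{A}}\otimes W))v$ for all $v$, which is precisely \eqref{eq:P:partialConcaveHypoGr}. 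I expect the main obstacle to be exactly this limiting step: the dilation only produces the $\epsilon$-perturbed inequality, and one cannot obtain the required continuity of $F$ from the Schur-complement representation of Theorem~\ref{T:SchurConcave} without circularity, so the continuity must instead be bootstrapped from the concavity of $F^+$ as above. As a shortcut one may alternatively regard a simple function as a finite tuple, view $F$ as the family $F_n:\mathbb{P}(E)^n\mapsto\mathbb{P}(E)$ of operator monotone free functions, and invoke the discrete equivalence recorded in \cite{palfia1}; the self-contained dilation argument is preferable here.
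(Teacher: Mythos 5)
Your proof is correct and is in substance the paper's own argument: the paper's one-line proof defers to the mechanism of Theorem~\ref{T:convex_hull_saturated}, whose engine is exactly the unitary-dilation inequality of Proposition~\ref{P:monotone_concave} that you transplant to $S([0,1],\lambda)^+\otimes\mathbb{P}(E)$, with the hypograph-saturation packaging merely undone again by the $\Leftarrow$ direction of Proposition~\ref{P:partialConcaveHypoGr}. Your explicit handling of the $\epsilon$-removal via $F^{+}$ and concavity along the line $\epsilon\mapsto X+\epsilon I$ correctly supplies the detail the paper leaves implicit when it asserts that the argument of Theorem~\ref{T:convex_hull_saturated} carries over unchanged to this setting.
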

\begin{proof}
The same argument as in the proof of Theorem~\ref{T:convex_hull_saturated} applies for $f:=F:(S([0,1],\lambda)^+\otimes \mathbb{P}(E))\mapsto\mathbb{P}(E)$ where $S([0,1],\lambda)^+\otimes \mathbb{P}(E)$ is substituted for both $\mathbb{P}(\mathbb{C})^k$ and $\mathbb{CP}(E)^k$, so we get that for each $(Y,X)\in\mathrm{co}^{\mathrm{mat}}(\hypo(f))(E)$ we have that $Y\leq f(X)$. Note that in Theorem~\ref{T:convex_hull_saturated} $\dim(E)<+\infty$ is assumed, but actually does not affect its proof. The claim that for each $(Y,X)\in\mathrm{co}^{\mathrm{mat}}(\hypo(f))(E)$ we have that $Y\leq f(X)$ combined with the implication '$\Leftarrow$' of Proposition~\ref{P:partialConcaveHypoGr} proves that $F$ satisfies \eqref{eq:P:partialConcaveHypoGr}. A similar argument to this can also be found in \cite{palfia1}.
\end{proof}

By the density of $S([0,1],\lambda)^+$ in $L^1([0,1],\lambda)^+$ we immediately obtain:

\begin{corollary}\label{C:ConcaveMonotone}
Let $F:(L^1([0,1],\lambda)^+\otimes \mathbb{P}(E))\mapsto\mathbb{P}(E)$ be an operator monotone free function. Then $F$ satisfies \eqref{eq:P:partialConcaveHypoGr}.
\end{corollary}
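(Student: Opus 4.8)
The plan is to reduce everything to the dense subalgebra of simple functions, where the preceding results already apply, and then to propagate \eqref{eq:P:partialConcaveHypoGr} to all of $L^1([0,1],\lambda)^+\otimes\mathbb{P}(E)$ by continuity. First I would restrict $F$ to $S([0,1],\lambda)^+\otimes\mathbb{P}(E)$. This restriction is again an operator monotone free function, now defined on simple functions, so Proposition~\ref{P:ConcaveMonotone} applies verbatim and yields that $F|_{S([0,1],\lambda)^+\otimes\mathbb{P}(E)}$ satisfies \eqref{eq:P:partialConcaveHypoGr}. Note that for an isometry $W:E\mapsto K$ the conjugate $(I_{\mathcal{A}}\otimes W^*)X(I_{\mathcal{A}}\otimes W)$ of a simple $X=\sum_j\mathbf{1}_{A_j}\otimes S_j$ is $\sum_j\mathbf{1}_{A_j}\otimes W^*S_jW$, again simple, and each $W^*S_jW\geq\epsilon I_E>0$ lies in $\mathbb{P}(E)$, so the conjugate stays in the simple-function domain.

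Since this restriction satisfies \eqref{eq:P:partialConcaveHypoGr}, it falls under the hypotheses of Theorem~\ref{T:OpMeanRandomVar} (equivalently, admits the Schur-complement model of Corollary~\ref{C:OperatorModel} and hence is norm-continuous by Theorem~\ref{T:SchurConcave}) and thus extends uniquely to a norm-continuous map $\hat F_1:L^1([0,1],\lambda)^+\otimes\mathbb{P}(E)\mapsto\mathbb{P}(E)$. It then remains to identify $F$ with $\hat F_1$. Here I would use operator monotonicity: given any $X$ in $L^1([0,1],\lambda)^+\otimes\mathbb{P}(K)$, choose simple functions $X_m,X_m'$ with $X_m\leq X\leq X_m'$ and $X_m,X_m'\to X$ in the $L^1$-norm. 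Monotonicity of $F$ gives $F(X_m)\leq F(X)\leq F(X_m')$, while $F(X_m)=\hat F_1(X_m)$ and $F(X_m')=\hat F_1(X_m')$ both converge to $\hat F_1(X)$ by continuity of $\hat F_1$; squeezing yields $F(X)=\hat F_1(X)$. Thus $F=\hat F_1$ is itself norm-continuous on all of $L^1([0,1],\lambda)^+\otimes\mathbb{P}(E)$.

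Finally I would fix an isometry $W:E\mapsto K$ and $X\in L^1([0,1],\lambda)^+\otimes\mathbb{P}(K)$ with $(I_{\mathcal{A}}\otimes W^*)X(I_{\mathcal{A}}\otimes W)$ in the domain, and approximate $X$ by simple functions $X_m\to X$ in $L^1$. The conjugates $(I_{\mathcal{A}}\otimes W^*)X_m(I_{\mathcal{A}}\otimes W)$ are again simple and converge in $L^1$ to $(I_{\mathcal{A}}\otimes W^*)X(I_{\mathcal{A}}\otimes W)$, so \eqref{eq:P:partialConcaveHypoGr} holds for each $X_m$ by the first step; passing to the limit using the norm-continuity of $F$ established above gives \eqref{eq:P:partialConcaveHypoGr} for $X$, as desired.

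I expect the main obstacle to be the identification $F=\hat F_1$, that is, the norm-continuity of the given $F$ on all of $L^1$: operator monotonicity by itself does not a priori force continuity (a direct order argument fails, since the two monotonicities push the relevant inequalities in opposite directions), and the squeeze above relies on producing monotone simple approximants $X_m\leq X\leq X_m'$ inside the positive cone converging in $L^1$. Justifying the existence of such order-comparable approximants is where the strong measurability and the strict lower bound $X\geq\epsilon I$ of elements of $L^1([0,1],\lambda)^+\otimes\mathbb{P}(K)$ must be used carefully.
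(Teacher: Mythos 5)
Your proposal follows essentially the same route as the paper: the paper's entire argument is the one-line remark that the claim follows from Proposition~\ref{P:ConcaveMonotone} together with the density of $S([0,1],\lambda)^+$ in $L^1([0,1],\lambda)^+$. Your elaboration --- in particular the squeeze argument identifying $F$ with the continuous extension $\hat F_1$ of its restriction to simple functions, and your explicit flagging that producing order-comparable simple approximants $X_m\leq X\leq X_m'$ in $L^1$ is the point requiring care --- is more detailed and more honest about where the work lies than the paper's own appeal to density, but it is not a different method.
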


\begin{theorem}\label{T:OpMeanProbMeas}
Assume that the sequence of functions $F_n:\mathbb{P}(E)^n\mapsto\mathbb{P}(E)$ for $0<n\in\mathbb{N}$ is an operator mean of discrete probability measures. Then it uniquely extends into a stochastic order preserving function $\hat{F}:\mathcal{P}^\infty(\mathbb{P}(E))\mapsto\mathbb{P}(E)$.
\end{theorem}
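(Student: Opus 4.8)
The plan is to realise the operator mean as a single free function of random variables, extend it to the cone $L^\infty([0,1],\lambda)^+\otimes\mathbb{P}(E)$ using the continuity machinery of the present section, and then pull it back to measures through the Skorokhod embedding of Proposition~\ref{P:embedP}, controlling the partial order by means of Strassen's theorem.

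First I would use properties 1) and 2) of an operator mean to assemble the sequence $F_n$ into one free function $F$ on the rational-weighted simple functions inside $S([0,1],\lambda)^+\otimes\mathbb{P}(E)$: a simple function taking the value $X_i$ on a set of measure $p_i/q$ is sent to $F_q$ of the tuple in which each $X_i$ is repeated $p_i$ times, and the permutation invariance 1) together with the grouping rule 2) guarantees that the resulting value is independent of the common denominator $q$ and of the labelling, so $F$ is well defined and still operator monotone and direct-sum preserving. Approximating an arbitrary simple function in $L^1$ norm by rational-weighted ones and using operator monotonicity to sandwich the values, I would extend $F$ to all of $S([0,1],\lambda)^+\otimes\mathbb{P}(E)$ as an operator monotone free function. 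By Proposition~\ref{P:ConcaveMonotone} this $F$ then satisfies \eqref{eq:P:partialConcaveHypoGr}, so Theorem~\ref{T:OpMeanRandomVar} furnishes a unique norm-continuous extension $\hat F_p:L^p([0,1],\lambda)^+\otimes\mathbb{P}(E)\mapsto\mathbb{P}(E)$; since $\mathcal{P}^\infty(\mathbb{P}(E))$ embeds into the bounded cone $L^\infty([0,1],\lambda)^+\otimes\mathbb{P}(E)\subseteq L^1([0,1],\lambda)^+\otimes\mathbb{P}(E)$ by Proposition~\ref{P:embedP}, I may work with $\hat F_1$.

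For $\mu\in\mathcal{P}^\infty(\mathbb{P}(E))$ I would then set $\hat F(\mu):=\hat F_1(\xi_\mu)$, where $\xi_\mu$ is a Skorokhod representative with $(\xi_\mu)_*\lambda=\mu$ provided by Proposition~\ref{P:embedP}. Stochastic order preservation is then immediate: if $\mu\leq\nu$, Theorem~\ref{T:StrassenMonotone} supplies representatives with $\xi_\mu(t)\leq\xi_\nu(t)$ almost everywhere, whence $\hat F(\mu)=\hat F_1(\xi_\mu)\leq\hat F_1(\xi_\nu)=\hat F(\nu)$, because $\hat F_1$ inherits operator monotonicity from $F$ (the positive-operator order being closed under norm limits). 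Uniqueness holds because $\hat F$ already coincides with the prescribed operator mean on the discrete rational measures, and these are dense in $\mathcal{P}^\infty(\mathbb{P}(E))$ in the topology transported from $L^1$ through the embedding, so any stochastic order preserving extension is forced to agree on the whole of $\mathcal{P}^\infty(\mathbb{P}(E))$ by approximation.

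The main obstacle is the well-definedness of $\hat F(\mu)$, namely its independence of the chosen representative $\xi_\mu$. The key point is that $\hat F_1$ is invariant under measure-preserving transformations $\phi$ of $([0,1],\lambda)$, that is $\hat F_1(\xi\circ\phi)=\hat F_1(\xi)$: this holds on rational-weighted simple functions by the permutation invariance 1), and since $\xi\mapsto\xi\circ\phi$ is an $L^1$-isometry it survives the norm-continuous extension of Theorem~\ref{T:OpMeanRandomVar}. As $([0,1],\lambda)$ is a standard atomless probability space, any two Borel maps into the Polish space $\mathbb{P}(E)$ inducing the same law $\mu$ are intertwined by such measure-preserving transformations, so every representative yields the same value and $\hat F$ is well defined. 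Verifying this equimeasurable-rearrangement invariance carefully, together with the continuity-in-the-weights step used to pass from rational-weighted to arbitrary simple functions, is where the real work lies.
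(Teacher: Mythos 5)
Your proposal is correct and its skeleton coincides with the paper's: build the lift $\hat F$ on simple functions from the $F_n$ via the permutation and grouping axioms, invoke Proposition~\ref{P:ConcaveMonotone} to get \eqref{eq:P:partialConcaveHypoGr}, extend by the density argument of Theorem~\ref{T:OpMeanRandomVar}, and transfer the stochastic order through Theorem~\ref{T:StrassenMonotone}. The one step where you genuinely diverge is the operator monotonicity of the extension $\hat F_1$. You obtain it by approximating an ordered pair $X\leq Y=X+P$ by ordered pairs of simple functions and using that the positive cone is norm closed; this is a legitimate and more elementary argument (one only has to check that the approximants can be kept in the strictly positive domain, which is automatic for the $L^\infty$ cone relevant to $\mathcal{P}^\infty(\mathbb{P}(E))$). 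The paper instead routes monotonicity through the model: it applies Corollary~\ref{C:OperatorModel} to the translated function $G(X)=\hat F(X+I)$, deduces $0\leq\lambda$ from $L_G\geq 0$ on a cone containing arbitrarily large elements, and then uses the maximality characterization of the Schur complement in Theorem~\ref{T:Schur_complement}. The paper's detour buys the explicit representation formula that is then recorded in Corollary~\ref{C:OpMeansProbMeasure}, whereas your argument only yields monotonicity. Two further remarks. First, you are more careful than the paper's proof about well-definedness of $\hat F(\mu)$ across Skorokhod representatives; your rearrangement-invariance argument is essentially what the paper establishes separately via Lemma~\ref{L:SorokhodIsomorphic} and the averaging construction behind Theorem~\ref{T:ProbMeasOrderIsom}, so this is a genuine point and your treatment of it is sound. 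Second, your ``sandwich'' passage from rational-weighted to arbitrary simple functions is under-specified as stated (monotonicity is in the operator arguments, not the weights, so the sandwiching needs a uniform continuity input); the cleaner fix is to observe that the rational-weighted simple functions are already norm dense and closed under direct sums, so the continuity mechanism of Theorem~\ref{T:OpMeanRandomVar} applies to them directly without an intermediate extension. This is a presentational gap, not a mathematical one.
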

\begin{proof}
Pairs of probability measures admit Skorokhod representations that are order preserving by Theorem~\ref{T:StrassenMonotone}. Then through the Skorokhod representation we obtain a lift $\hat{F}:S([0,1],\lambda)^+\otimes \mathbb{P}(E)\mapsto\mathbb{P}(E)$ representing the sequence of functions $F_n:\mathbb{P}(E)^n\mapsto\mathbb{P}(E)$, such that $\hat{F}$ is operator monotone. Then by Proposition~\ref{P:ConcaveMonotone} $\hat{F}$ satisfies \eqref{eq:P:partialConcaveHypoGr}, so existence and uniqueness of the extension $\hat{F}:L^\infty([0,1],\lambda)^+\otimes \mathbb{P}(E)\mapsto\mathbb{P}(E)$ follows from Theorem~\ref{T:OpMeanRandomVar}. Then Corollary~\ref{C:OperatorModel} applies to the translated function $G(X):=\hat{F}(X+I)$ where $I(t):=I_E$ for all $t\in[0,1]$. By (a) of Corollary~\ref{C:OperatorModel} we have $L_{G}(X)\geq 0$ for any $(X+I)\in L^\infty([0,1],\lambda)^+\otimes \mathbb{P}(E)$, so it follows that $0\leq\lambda$ in \eqref{eq:C:OperatorModel1}, thus $L_{G}(X)$ is order preserving. By the maximal characterization of the Schur complement in Theorem~\ref{T:Schur_complement} it follows that it is also order preserving. Thus $\hat{F}$ is also order preserving, i.e. operator monotone. Then its restriction $\hat{F}:\mathcal{P}^\infty(\mathbb{P}(E))\mapsto\mathbb{P}(E)$ preserves the stochastic order.
\end{proof}

\begin{corollary}\label{C:OpMeansProbMeasure}
Let $F:\mathcal{P}^\infty(\mathbb{P}(E))\mapsto\mathbb{P}(E)$ be a stochastic order preserving free function. Then there exists an operator monotone free function $\hat{F}:L^\infty([0,1],\lambda)^+\otimes \mathbb{P}(E)\mapsto\mathbb{P}(E)$ that represents $F$ and $\hat{F}(X+I)$ is of the form \eqref{eq:C:OperatorModel1} where $0\leq\lambda\in (L^\infty([0,1],\lambda)\otimes \mathcal{B}(\mathcal{H}_E))^{*}$ and $I(t):=I_E$ for all $t\in[0,1]$. In particular $\hat{F}(X+I)$ is given by \eqref{eq:T:reconstruct_schur:0}.
\end{corollary}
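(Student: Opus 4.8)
The plan is to transport $F$ to the random variable picture via the Skorokhod embedding of Proposition~\ref{P:embedP} and then feed the resulting function into the operator model machinery, essentially rerunning the second half of the proof of Theorem~\ref{T:OpMeanProbMeas}. First I would define $\hat F$ on $L^\infty([0,1],\lambda)^+\otimes \mathbb{P}(E)$ by $\hat F(\xi):=F((\xi)_*\lambda)$, the value of $F$ on the pushforward measure. This is well defined because $(\xi)_*\lambda$ depends only on the almost-sure class of $\xi$ (cf. Remark~\ref{R:ForgetsProduct}), and by Proposition~\ref{P:embedP} every $\mu\in\mathcal{P}^\infty(\mathbb{P}(E))$ arises as such a pushforward, so $\hat F$ genuinely represents $F$. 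The free function properties of $\hat F$ are inherited from those of $F$: conjugating a random variable by a unitary pushes forward to the corresponding conjugation of its law, while for $\xi\in L^\infty([0,1],\lambda)^+\otimes\mathbb{P}(E)$ and $\eta\in L^\infty([0,1],\lambda)^+\otimes\mathbb{P}(K)$ the measure $(\xi\oplus\eta)_*\lambda$ lies in the coupling set $\Gamma((\xi)_*\lambda,(\eta)_*\lambda)$, so direct sum invariance of $F$ in the sense of Definition~\ref{D:directsumProbMeas} yields $\hat F(\xi\oplus\eta)=\hat F(\xi)\oplus\hat F(\eta)$.

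The crucial point is operator monotonicity of $\hat F$. If $\xi\leq\eta$ in the cone $L^\infty([0,1],\lambda)^+\otimes\mathbb{P}(E)$, that is $\xi(t)\leq\eta(t)$ almost surely, then $\xi$ and $\eta$ are themselves order-preserving Skorokhod representations of their pushforwards, so the implication (ii)$\Rightarrow$(i) of Theorem~\ref{T:StrassenMonotone} gives $(\xi)_*\lambda\leq(\eta)_*\lambda$ in the stochastic order. Since $F$ preserves the stochastic order, $\hat F(\xi)=F((\xi)_*\lambda)\leq F((\eta)_*\lambda)=\hat F(\eta)$, whence $\hat F$ is operator monotone. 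Running the argument of Proposition~\ref{P:ConcaveMonotone}, which is that of Theorem~\ref{T:convex_hull_saturated}, on the matrix convex set $L^\infty([0,1],\lambda)^+\otimes\mathbb{P}(E)$, I obtain that $\hat F$ satisfies \eqref{eq:P:partialConcaveHypoGr}.

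From here I would reproduce the end of the proof of Theorem~\ref{T:OpMeanProbMeas}. The translate $G(X):=\hat F(X+I)$, with $I(t):=I_E$, is a strictly positive free function satisfying \eqref{eq:P:partialConcaveHypoGr} on the matrix convex domain $D(E)=\{X:X+I\in L^\infty([0,1],\lambda)^+\otimes\mathbb{P}(E)\}\ni 0$, whose matrix convex hull has nonempty interior (it contains a ball about $0$). Corollary~\ref{C:OperatorModel} then provides the model $L_G(X)=T_G\otimes I_E+\Lambda_G(X)$ of the form \eqref{eq:C:OperatorModel1}; since by part (a) of Corollary~\ref{C:OperatorModel} we have $L_G(X)\geq 0$ on the whole domain, which contains $tI$ for arbitrarily large $t>0$, linearity of $\Lambda_G$ forces its complete positivity, i.e. $0\leq\lambda$. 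Finally Theorem~\ref{T:reconstruct_schur} rewrites $\hat F(X+I)=G(X)$ as the Schur complement formula \eqref{eq:T:reconstruct_schur:0}, completing the statement.

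The main obstacle is the operator monotonicity step together with the verification that $\hat F$ is a legitimate free function. One must check that the order on $L^\infty([0,1],\lambda)^+\otimes\mathbb{P}(E)$ really is the almost-sure pointwise operator order, so that Strassen's equivalence in Theorem~\ref{T:StrassenMonotone} is directly applicable, and that the set-valued direct sum of measures from Definition~\ref{D:directsumProbMeas}, combined with the non-injectivity of the embedding noted in Remark~\ref{R:ForgetsProduct}, does not obstruct direct sum invariance of $\hat F$, which relies on $F$ returning a common value on all couplings. Once $\hat F$ is identified as an operator monotone free function, the passage to the model and the Schur complement representation is routine given Corollary~\ref{C:OperatorModel} and Theorem~\ref{T:reconstruct_schur}.
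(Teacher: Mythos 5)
Your proposal is correct and follows essentially the same route as the paper: lift $F$ to the random-variable cone via the Skorokhod representation, deduce operator monotonicity of $\hat F$ from Strassen's equivalence (Theorem~\ref{T:StrassenMonotone}) and the stochastic-order preservation of $F$, pass to \eqref{eq:P:partialConcaveHypoGr} via the argument of Proposition~\ref{P:ConcaveMonotone}/Corollary~\ref{C:ConcaveMonotone}, and then apply Corollary~\ref{C:OperatorModel} and Theorem~\ref{T:reconstruct_schur} to the translate $G(X)=\hat F(X+I)$ to get $0\leq\lambda$ and the Schur complement formula. You in fact supply details (the explicit definition $\hat F(\xi)=F(\xi_*\lambda)$, well-definedness, and the free-function verification) that the paper's one-line proof leaves implicit.
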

\begin{proof}
Through the Skorokhod representation Theorem~\ref{T:StrassenMonotone} we obtain the lift $\hat{F}:L^\infty([0,1],\lambda)^+\otimes \mathbb{P}(E)\mapsto\mathbb{P}(E)$ which by Corollary~\ref{C:ConcaveMonotone} satisfies \eqref{eq:P:partialConcaveHypoGr}. Then Corollary~\ref{C:OperatorModel} applies to the translated function $G(X):=\hat{F}(X+I)$. By (a) of Corollary~\ref{C:OperatorModel} $L_{G}(X)\geq 0$ for any $(X+I)\in L^\infty([0,1],\lambda)^+\otimes \mathbb{P}(E)$, it follows that $0\leq\lambda$ in \eqref{eq:C:OperatorModel1} and \eqref{eq:T:reconstruct_schur:0} holds.
\end{proof}

\section*{Acknowledgment}
This work was partly supported by the grant TUDFO/47138-1/2019-ITM of the Ministry for Innovation and Technology, Hungary; the Hungarian National Research, Development and Innovation Office NKFIH, Grant No. FK128972; and the National Research Foundation of Korea (NRF) grant funded by the Korea government (MEST) No.2015R1A3A2031159, No.2016R1C1B1011972 and No.2019R1C1C1006405.




\end{document}